\def\eqref#1{equation~\ref{#1}}
\def\1{\bm{1}}
\DeclareMathAlphabet{\mathsfit}{\encodingdefault}{\sfdefault}{m}{sl}
\SetMathAlphabet{\mathsfit}{bold}{\encodingdefault}{\sfdefault}{bx}{n}
\newcommand{\R}{\mathbb{R}}
\begin{document}

%

%
\runningauthor{Abdurakhmon Sadiev, Yury Demidovich, Igor Sokolov, Grigory Malinovsky, Sarit Khirirat,   Peter Richtárik}

\twocolumn[

\aistatstitle{Improved Convergence in Parameter-Agnostic Error Feedback through Momentum}

\aistatsauthor{Abdurakhmon Sadiev${^{*}}$ \And Yury Demidovich \And Igor Sokolov  }

\aistatsauthor{ Grigory Malinovsky \And Sarit Khirirat${^{*}}$ \And   Peter Richtárik}

\aistatsaddress{ \\
King Abdullah University of Science and Technology\\ Center
of Excellence for Generative AI\\ Thuwal, Saudi Arabia
} 
]


\footnotetext[1]{{*} Corresponding authors: \texttt{abdurakhmon.sadiev}, \texttt{sarit.khirirat@kaust.edu.sa}}

\begin{abstract}
Communication compression is essential for scalable distributed training of modern machine learning models, but it often degrades convergence due to the noise it introduces. 
Error Feedback (EF) mechanisms are widely adopted to mitigate this issue of distributed compression algorithms. 
Despite their popularity and training efficiency, existing distributed EF algorithms often require prior knowledge of problem parameters (e.g., smoothness constants) to fine-tune stepsizes. 
This limits their practical applicability especially in large-scale  neural network training. 
In this paper, we study normalized error feedback algorithms that combine EF with normalized updates,  various momentum variants, and parameter-agnostic, time-varying stepsizes, thus eliminating the need for problem-dependent tuning.
We analyze the convergence of these algorithms for minimizing smooth functions, and establish parameter-agnostic complexity bounds that are close to the best-known bounds with carefully-tuned problem-dependent stepsizes. 
Specifically, we show that normalized EF21 achieve the convergence rate of near $\cO(1/T^{1/4})$ for Polyak's heavy-ball momentum,  $\cO(1/T^{2/7})$ for Iterative Gradient Transport (IGT), and  $\cO(1/T^{1/3})$ for STORM and Hessian-corrected momentum. 
Our results hold with decreasing stepsizes and small mini-batches.
Finally, our empirical experiments confirm our theoretical insights. 
\end{abstract}


\section{Introduction}

Distributed optimization has become essential for efficiently training modern machine learning models on large-scale datasets. 
This shift is driven by the increasing size of models—such as deep neural networks with billions of parameters—and the increasing volume of training data~\citep{Kolesnikov2019BigT,LMFewShot}. In distributed settings, multiple clients collaborate in parallel, sharing local information (e.g., stochastic gradients) with a central server to jointly minimize the average of their objective functions, thereby keeping training computationally feasible. However, a major challenge in this paradigm is the communication bottleneck, which becomes particularly severe with large models. For instance, transmitting the VGG-16 model~\citep{simonyan2014very}, which contains 138.34 million parameters, requires over 500 MB of data per exchange—placing a significant strain on network resources during distributed stochastic gradient descent (SGD), which aims to find an $\epsilon$-approximate critical point $x^t$ such that $\Exp{\norm{\nabla f(x^t)}} \leq \epsilon$.

A popular strategy to reduce communication overhead is  compression, where clients apply compression operators to their local gradients before sending them to the server. Contractive but potentially biased compressors—including many widely used compressors  from sparsification~\citep{alistarh2018convergence,StitchSparseSGD} to quantization~\citep{wen2017terngrad,alistarh2017qsgd,beznosikov2023biased,horvoth2022natural}—have been shown to have favorable theoretical properties~\citep{StitchSparseSGD,karimireddy2019error,EF21,fatkhullin2025ef21}, and to outperform randomized (unbiased) compressors in practice for distributed gradient-based algorithms~\citep{lin2017deep,sun2019sparse,vogels2019powersgd}. However, naively aggregating these biased compressed gradients from the clients in general does not converge~\citep{khirirat2020compressed} or even  diverges~\citep{karimireddy2019error,beznosikov2023biased}.

To improve the convergence stability of distributed compression methods, error feedback (EF) mechanisms have been proposed. A widely studied EF variant, \algname{EF14}~\citep{Seide20141bitSG}, has been extensively explored in both centralized~\citep{StitchSparseSGD,karimireddy2019error} and distributed optimization settings~\citep{alistarh2018convergence,tang2019doublesqueeze,tang2019texttt,gorbunov2020linearly,khirirat2020compressed,qian2021error}. However, many of these works rely on the assumption that the norms of the stochastic gradients are uniformly bounded, which restricts the class of objective functions the EF algorithms can solve.
Under this assumption, distributed EF algorithms have been shown to minimize smooth nonconvex functions with a convergence rate of $\cO(1/T^{1/3})$ in the gradient norm, where $T$ denotes the total number of iterations~\citep{koloskova2019decentralized}.

To further improve theoretical convergence guarantees, many novel EF variants have been developed, including \algname{EControl}~\citep{gao2024econtrol} and \algname{EF21}~\citep{EF21}. Notably, \algname{EF21} achieves a convergence rate of $\cO(1/\sqrt{T})$ in the gradient norm for deterministic optimization problems—matching the rate of classical gradient descent. This method has also been extended to broader settings. 
One extension is \algname{EF21-P}~\citep{EF21-P}, which adapts \algname{EF21} for federated optimization, where model parameters (rather than gradients) are exchanged between clients and the server. 
Another extension is \algname{EF21-SGDM}~\citep{fatkhullin2023momentum,khirirat2024errorfeedbackl0l1smoothnessnormalization}, which incorporates Polyak momentum into the original \algname{EF21} algorithms to handle stochastic optimization, where clients compute local stochastic gradients. \algname{EF21-SGDM} achieves the convergence rate of $\cO(1/{T}^{1/4})$. 
To further accelerate the convergence, \algname{EF21-MVR}~\citep{fatkhullin2023momentum} integrates the momentum-with-variance-reduction (MVR) update from STORM~\citep{cutkosky2019momentum}, and enjoys the improved convergence rate of $\cO(1/T^{1/3})$. 


The smoothness assumption of objective functions plays a key role in fine-tuning stepsizes  to ensure significant convergence improvements in distributed EF algorithms. However, stepsize rules that depend on smoothness are often impractical to implement. For example, estimating smoothness constants is typically infeasible, especially in deep neural network training. This motivates the need for \emph{parameter-agnostic} stepsize rules that do not depend on the smoothness constants. Crucially, it is important to design these stepsize rules so that they still achieve \emph{near-optimal} convergence, implying the performance of distributed EF algorithms with parameter-agnostic stepsizes  almost matches that of EF algorithms with optimally tuned, problem-dependent stepsizes.


One  approach for incorporating parameter-agnostic stepsizes into gradient-based algorithms—while still achieving near-optimal convergence—is through \emph{normalization}. For example, normalized stochastic momentum methods~\citep{cutkosky2020momentum} have been shown by~\citet{hubler2024parameter} to attain a near-optimal convergence rate of $\tilde{\cO}(1/{T}^{1/4})$ using parameter-agnostic, decreasing stepsize rules. However, to the best of our knowledge, this normalization approach has been so far limited to centralized algorithms.

\section{Contributions}

We summarize our  key contributions as follows:

\renewcommand\labelitemi{\ding{117}}
\begin{itemize}
    \item \textbf{Distributed EF21 algorithms with parameter-agnostic stepsizes and five momentum variants.} 
    In~\Cref{sec:ef21_momentum}, 
    we propose distributed \algname{EF21} algorithms that exploit normalization and momentum for solving stochastic optimization. Specifically, our algorithms 
    employ normalization, which enables parameter-agnostic stepsizes without requiring the knowledge of problem parameters, such as the smoothness constant $L$ or the suboptimality gap $f(x^0) - f^{\inf}$.
    Furthermore, our algorithms leverage {\color{black} five variants of momentum updates widely adopted in centralized stochastic gradient algorithms: (1) Polyak momentum, (2) IGT momentum~\citep{cutkosky2020momentum}, (3) MVR momentum~\citep{cutkosky2019momentum}, (4) two second-order momentum schemes by~\citet{salehkaleybar2022momentum,tran2022better}.

    } 
    
    \item \textbf{Near-optimal convergence for non-convex, smooth functions.} 
    In~\Cref{sec:convergence}, we prove that our proposed algorithms, using parameter-agnostic and decreasing stepsizes, achieve near-optimal convergence rates for minimizing non-convex smooth functions.
    Our results match—up to logarithmic factors—the convergence guarantees of momentum-based \algname{EF21} algorithms from prior work, which typically rely on problem-dependent stepsizes.
     They also align with the rates achieved by centralized stochastic methods using analogous momentum variants. 
     Furthermore, our algorithms only require a batch size of $\cO\br{1}$, in contrast to existing EF algorithms such as \citet{fatkhullin2023momentum}, which often rely on problem-dependent stepsizes and larger mini-batch sizes.
    A summary of theoretical comparisons between our results and existing analyses in provided in~\Cref{table:complexities-ef}.
    

    \begin{table*}
	\centering
	\caption{A comparison of distributed error feedback methods using contractive compressors for stochastic optimization under data heterogeneity. In the table, \textbf{PA} indicates whether the methods use parameter-agnostic stepsizes,  \textbf{SO} denotes the use of second-order information of the functions, and \textbf{Complexity} refers to the number of iterations $T$ equired to ensure that the output $x^T$ of the method to satisfy $\Exp{\norm{\nabla f(x^T)}} \leq \varepsilon$ for some $\varepsilon>0$. } 
	\label{table:complexities-ef}
	\begin{threeparttable}
    \resizebox{0.6\textwidth}{!}{%
		\begin{tabular}{ccccc}
			&  \\			
			{\bf Method}  & \bf Work & \bf PA&  \bf SO& \bf \shortstack{Complexity}\\			
			\hline
			\algname{EF14} &\citet{Seide20141bitSG}& \xmark & \xmark 
			& $\cO\br{\varepsilon^{-4}}$
			\\ 
			\hline
			\algname{Choco-SGD}&\citet{koloskova2020decentralized} & \xmark& \xmark
			& $\cO\br{\varepsilon^{-4}}$\\ 
			
			\hline
			\algname{EF21-SGD}&\citet{fatkhullin2025ef21} & \xmark & \xmark
			& $\cO\br{\varepsilon^{-4}}$\\ 

			\hline
			\algname{EF21-SGDM}&\citet{fatkhullin2023momentum}
			& \xmark & \xmark & $\cO\br{\varepsilon^{-4}}$\\ 

            \hline
			\algname{EF21-MVR}&\citet{fatkhullin2023momentum}
			& \xmark & \xmark& $\cO\br{\varepsilon^{-3}}$\\ 
            
			\hline
			\algname{$\norm{\text{EF21-SGDM}}$}&\citet{khirirat2024errorfeedbackl0l1smoothnessnormalization}
			& \xmark & \xmark& $\cO\br{\varepsilon^{-4}}$\\ 
            
			\hline
			\rowcolor{LightCyan}
			\algname{$\norm{\text{EF21-SGDM}}$} &\textbf{This work} & \cmark & \xmark
			& $\Tilde{\cO}\br{\varepsilon^{-4}}$ \\ 
			\hline
			\rowcolor{LightCyan}
			\algname{$\norm{\text{EF21-IGT}}$} &\textbf{This work} & \cmark & \xmark
			& $\Tilde{\cO}\br{\varepsilon^{-\nicefrac{7}{2}}}$ \\ 

            \hline
			\rowcolor{LightCyan}
			\algname{$\norm{\text{EF21-RHM}}$} &\textbf{This work} & \cmark & \cmark
			& $\Tilde{\cO}\br{\varepsilon^{-3}}$ \\ 

            \hline
			\rowcolor{LightCyan}
			\algname{$\norm{\text{EF21-HM}}$} &\textbf{This work} & \cmark & \cmark
			& $\Tilde{\cO}\br{\varepsilon^{-3}}$ \\ 

            \hline
			\rowcolor{LightCyan}
			\algname{$\norm{\text{EF21-MVR}}$} &\textbf{This work} & \cmark & \xmark
			& $\Tilde{\cO}\br{\varepsilon^{-3}}$ \\ 
		\end{tabular}
        }
	\end{threeparttable}
\end{table*}

    \item \textbf{Numerical evaluation.} 
    In~\Cref{sec:exp}, we benchmark our proposed algorithms for solving the image classification task with the CIFAR-10 dataset using the ResNet-18 model. Among five momentum variants, \algname{$|| \text{EF21-HM} ||$}, which employs second-order momentum, achieves the fastest per-epoch convergence. However, this performance comes at the cost of higher computations per epoch. Notably, \algname{$|| \text{EF21-IGT} ||$} outperforms other error feedback algorithms in terms of the solution accuracy against the wall-clock time. 
\end{itemize}

\section{Related Work}

\paragraph{Error feedback.}
Error feedback (EF) mechanisms have been widely adopted to enhance the solution accuracy of gradient-based algorithms that employ communication compression. The first of these EF mechanisms, \algname{EF14}, was introduced by~\citet{Seide20141bitSG} and later analyzed in both centralized~\citep{StitchSparseSGD,karimireddy2019error} and distributed settings~\citep{alistarh2018convergence,tang2019doublesqueeze,tang2019texttt,gorbunov2020linearly,khirirat2020compressed,qian2021error}. 
Another novel EF mechanism, \algname{EF21}, was proposed by~\citet{EF21} and offers an improved convergence rate of $\cO(1/\sqrt{T})$ in the gradient norm, compared to the $\cO(1/T^{1/3})$ rate of earlier EF algorithms like~\citet{koloskova2019decentralized}. 
More importantly, \algname{EF21} achieves this rate without requiring restrictive conditions, such as uniformly bounded gradient norms or bounded data heterogeneity.
Furthermore, \algname{EF21} has been extended to broader problem settings. For stochastic optimization, it can be adapted by using large mini-batches~\citep{fatkhullin2025ef21} or by employing Polyak momentum, resulting in \algname{EF21-SGDM} and \algname{EF21-MVR}~\citep{fatkhullin2023momentum}. In the context of federated optimization, where model parameters are exchanged instead of gradients, its variant called \algname{EF21-P}~\citep{EF21-P} has been proposed. 
More recently, \algname{EControl}~\citep{gao2024econtrol} was introduced to provide even stronger convergence guarantees for distributed stochastic optimization, thus advancing the development of provably efficient, distributed EF methods.

\paragraph{Normalization.}
Normalization has been commonly used to stabilize the training of randomized gradient-based algorithms for solving problems under relaxed smoothness conditions or in the presence of heavy-tailed noise.  
For minimizing relaxed smooth functions, normalization enables  stochastic momentum methods~\citep{zhao2021convergence,hubler2024parameter} in the centralized setting and \algname{EF21-SGDM}~\citep{fatkhullin2023momentum,khirirat2024errorfeedbackl0l1smoothnessnormalization} in the distributed setting to converge. 
In the heavy-tailed noise setting, normalization has been shown to ensure the convergence of stochastic methods in high-probability guarantees, including stochastic momentum methods as demonstrated by~\cite{cutkosky2021high,hubler2024gradient}.
Furthermore, normalization allows for the use of parameter-agnostic stepsize rules, as shown by~\citet{fatkhullin2023momentum}. 
However, these results are limited to the centralized setting.

\paragraph{Stochastic momentum algorithms.}
Stochastic momentum algorithms are widely used and studied for minimizing smooth objective functions. These algorithms are inspired by Polyak’s heavy-ball momentum~\citep{polyak1964speeding}, which achieves accelerated linear convergence compared to classical gradient descent when applied to twice continuously differentiable, strongly convex, and smooth functions~\citep{ghadimi2015global}. Several works~\citep{yan2018unified,yu2019linear,liu2020improved,cutkosky2020momentum,hubler2024gradient} have shown that stochastic momentum algorithms can achieve a convergence rate of $\cO(1/T^{1/4})$, matching that of classical stochastic gradient descent (SGD).
To further accelerate convergence, recent research has proposed various momentum variants that modify the gradient estimators used in momentum updates. One notable approach is extrapolated momentum, introduced by~\citet{cutkosky2020momentum}, which achieves a convergence rate of $\cO(1/T^{2/7})$. This has been further improved by algorithms such as STORM~\citep{cutkosky2019momentum}, MARS~\citep{yuan2024mars}, and two second-order momentum algorithms~\citep{salehkaleybar2022momentum,tran2022better}, all of which attain a rate of $\cO(1/T^{1/3})$. This rate is known to be optimal for minimizing smooth nonconvex functions under mild conditions~\citep{arjevani2023lower}.
However, these algorithms have primarily been developed and analyzed in the context of centralized optimization. As a result, their  applicability to the distributed optimization setting for broader training applications remains limited.

\paragraph{Parameter-agnostic algorithms.}
Other approaches for parameter-agnostic stepsizes, in addition to normalization, include adaptive stochastic methods, such as backtracking line search~\citep{armijo1966minimization}, AdaGrad~\citep{duchi2011adaptive}, AdaGrad-Norm~\citep{streeter2010less}, and ADAM~\citep{kingma2017adammethodstochasticoptimization}.

\section{Preliminaries}

\subsection{Notations}

We denote the expectation of a random variable $u$ by $\Exp{u}$. For any vectors $x, y \in \R^d$, $\inp{x}{y}$ refers to their inner product, and $\norm{x} = \sqrt{\inp{x}{x}}$ denotes the Euclidean norm of the vector $x$. For a matrix $A \in \R^{m \times n}$, $\norm{A}$ denotes its spectral norm, i.e., the largest singular value of $A$. The notation $\cO(h(x))$ implies that a function $f(x)$ satisfies $f(x) \leq c \cdot h(x)$ for some constant $c > 0$, while $\tilde{\cO}(h(x))$ hides both constant and logarithmic factors. 
Finally, we use $\underset{x\in\R^d}{\inf} f(x)$ to denote the infimum of a function $f \colon \R^d \to \R$, and $\underset{x\in\R^d}{\min} f(x)$ to denote its minimum, when it exists.


\subsection{Problem Formulation}
Consider a distributed stochastic optimization problem: 
\begin{eqnarray}\label{eqn:problem}
 \underset{x\in\R^d}{\min} \quad f(x):=\frac{1}{n}\sum_{i=1}^n f_i(x), 
\end{eqnarray}
where $f_i(x)= \ExpSub{\xi_i\sim \cD_i}{f_i(x;\xi_i)}$, and  $f_i(x;\xi_i)$ is a possibly nonconvex function  parameterized by the vector $x\in\R^d$ on the random variable $\xi_i$ drawn from the data distribution $\cD_i$ known to client $i$. 
Problem~(\ref{eqn:problem}) often appears in supervised machine learning applications~\citep{friedman2009elements}.

\subsection{Assumptions}

To facilitate our  analysis, we impose standard assumptions on compression operators and objective functions.

First, we assume an $\alpha$-contractive  compression operator, which covers many popular biased compressors of interest, including TopK~\citep{alistarh2018convergence} and RandK sparsifiers~\citep{beznosikov2023biased}.
\begin{assumption}[Contractive compression]\label{assum:contract_comp}
	A biased but possibly randomized compressor $\cC:\R^d\rightarrow\R^d$ is $\alpha$-contractive with its sample $\xi_i \sim \cD_i$ if there exists $\alpha \in (0,1]$ such that 
\begin{eqnarray*}
	\Exp{\sqnorm{\cC(v)-v}}  \leq (1-\alpha)\sqnorm{v}, \quad \forall v \in \R^d.
\end{eqnarray*}		
\end{assumption}

Second, we introduce commonly used conditions on the objective function, including the existence of a finite infimum, and Lipschitz continuity of the gradient and Hessian of component and stochastic functions.
\begin{assumption}\label{assum:inf}
The function $f:\R^d\rightarrow\R$ is bounded from below, i.e., $f^{\inf}= \underset{x\in\R^d}{\inf} f(x) > -\infty$.
\end{assumption}

\begin{assumption}\label{assum:L_stoc_comp_fcn}
The stochastic component function $f_i(x;\xi_i)$ has the $L_{\text{ms},i}$-Lipschitz continuous gradient, if there exists $L_{\text{ms},i} >0$ such that for all $x,y\in\R^d$, 
\begin{eqnarray*}
    \ExpSub{\xi_i}{\sqnorm{\nabla f_i(x;\xi_i) - \nabla f_i(y;\xi_i)}} \leq L_{\text{ms},i}^2 \sqnorm{x-y}.
\end{eqnarray*}
\end{assumption}

\begin{assumption}\label{assum:L_comp_fcn}
The component function $f_i:\R^d\rightarrow\R$ has the $L_i$-Lipschitz continuous gradient, if there exists $L_i >0$ such that for all $x,y\in\R^d$, 
\begin{eqnarray*}
    \norm{\nabla f_i(x) - \nabla f_i(y)} \leq L_i \norm{x-y}.
\end{eqnarray*}
\end{assumption}

\begin{assumption}\label{assum:L_fcn}
The function $f:\R^d\rightarrow\R$ has the $L$-Lipschitz continuous gradient, if there exists $L >0$ such that for all $ x,y\in\R^d$, 
\begin{eqnarray*}
    \norm{\nabla f(x) - \nabla f(y)} \leq L \norm{x-y}.
\end{eqnarray*}
\end{assumption}

\begin{assumption}\label{assum:L_Hessian_comp_fcn}
The component function $f_i:\R^d\rightarrow\R$ has the $L_{h,i}$-Lipschitz continuous Hessian, if there exists $L_{h,i} >0$ such that for all $x,y\in\R^d$, 
\begin{eqnarray*}
    \norm{\nabla^2 f_i(x) - \nabla^2 f_i(y)} \leq L_{h,i} \norm{x-y}.
\end{eqnarray*}
\end{assumption}

\begin{assumption}\label{assum:L_Hessian_fcn}
The  function $f:\R^d\rightarrow\R$ has the $L_{h}$-Lipschitz continuous Hessian, if there exists $L_{h} >0$ such that for all $x,y\in\R^d$, 
\begin{eqnarray*}
    \norm{\nabla^2 f(x) - \nabla^2 f(y)} \leq L_{h} \norm{x-y}.
\end{eqnarray*}
\end{assumption}

Note that \Cref{assum:L_stoc_comp_fcn} implies \Cref{assum:L_comp_fcn} with $L_i = L_{\text{ms},i}$, while  \Cref{assum:L_comp_fcn} implies \Cref{assum:L_fcn} with $L = \frac{1}{n}\sum_{i=1}^n L_i$. Furthermore, \Cref{assum:L_Hessian_comp_fcn} implies \Cref{assum:L_Hessian_fcn} with $L_h = \frac{1}{n}\sum_{i=1}^n L_{h,i}$.

Third, we assume that each client can query its stochastic oracle to obtain its local noisy gradient and Hessian that satisfy the unbiased and variance-bounded condition, which is commonly used for analyzing stochastic gradient methods~\citep{cutkosky2019momentum,cutkosky2020momentum,tran2022better,hubler2024parameter,hubler2024gradient}. 
\begin{assumption}\label{assum:stoc_g_h}
	The local stochastic gradient $\nabla f_i(x;\xi_i)$ at client $i$ is an unbiased estimator of  $\nabla f_i(x)$ with bounded variance if it satisfies: for all $x\in\R^d$, 
	\begin{eqnarray*}
		\Exp{\nabla f_i(x;\xi_i)} & =&  \nabla f_i(x), \quad \text{and}  \\ 
        \Exp{ \sqnorm{\nabla f_i(x;\xi_i) - \nabla f(x)} } &\leq& \sigma^2_g.
	\end{eqnarray*}	
	Furthermore, 
	the local stochastic Hessian $\nabla^2 f_i(x;\xi)$ at client $i$ is an unbiased estimator of  $\nabla^2 f_i(x)$ with bounded variance if it satisfies: for all $x\in\R^d$,
	\begin{eqnarray*}
		\Exp{\nabla^2 f_i(x;\xi_i)} & =&  \nabla^2 f_i(x), \quad \text{and}  \\
     \Exp{ \sqnorm{\nabla^2 f_i(x;\xi_i) - \nabla^2 f(x)} } &\leq& \sigma^2_h.
	\end{eqnarray*}	
\end{assumption}




\section{EF21 Methods with Five Momentum Updates}
\label{sec:ef21_momentum}

To solve Problem~(\ref{eqn:problem}), we consider EF21 algorithms (\Cref{alg:EF_momentum}) that leverage normalized descent updates and momentum variants. 

At each iteration $t=0,1,\ldots,T-1$, key updating rules for~\Cref{alg:EF_momentum} are described as follows: 
\begin{itemize}
    \item Each client $i=1,2,\ldots,n$ computes stochastic gradients $\nabla f_i(\cdot;\xi_i^{t+1})$, and updates its local momentum vector $v_i^{t+1}$ depending on the choice of momentum. 
    \item Each client $i=1,2,\ldots,n$ transmits the compressed vector $c_i^{t+1} = \mathcal{C}_i^{t+1}(v_i^{t+1}-g_i^t)$, and updates its local memory vector $g_i^{t+1} = g_i^t + c_i^{t+1}$.
    \item The server receives $c_i^{t+1}$ from every client, updates $g^{t+1} = g^t + \frac{1}{n}\sum_{i=1}^n c_i^{t+1}$, and computes $x^{t+1} = x^t -\gamma_t \cdot \nicefrac{g^t}{\norm{g^t}}$.
\end{itemize}

Depending on the choice for updating the momentum,~\Cref{alg:EF_momentum} is referred to as:
\begin{itemize}
    \item \algname{$\|\text{EF21-SGDM}\|$}~\citep{khirirat2024errorfeedbackl0l1smoothnessnormalization} when using the Polyak momentum, 
    \item \algname{$\|\text{EF21-IGT}\|$} when using the Implicit Gradient Transport (IGT) momentum~\citep{cutkosky2020momentum}, 
    \item \algname{$\|\text{EF21-RHM}\|$} when using the second-order momentum variant by~\citet{salehkaleybar2022momentum}, \item \algname{$\|\text{EF21-HM}\|$} when using the second-order momentum variant by~\citet{tran2022better}, and 
    \item \algname{$\|\text{EF21-MVR}\|$} when using the momentum variant from STORM~\citep{cutkosky2019momentum}.
\end{itemize}
Here, \algname{$\|\text{EF21-SGDM}\|$} and  \algname{$\|\text{EF21-MVR}\|$} are the normalized versions of \algname{EF21-SGDM} and \algname{EF21-MVR}, respectively, as analyzed by~\citet{fatkhullin2023momentum}.
Moreover, by setting the compression operator $\cC_i^t$ to the identity and choosing $n=1$, 
our algorithms reduce to the corresponding normalized stochastic momentum methods in the centralized setting: 
(1) \algname{$\|\text{EF21-SGDM}\|$} becomes normalized stochastic momentum methods~\citep{cutkosky2020momentum,hubler2024parameter}, (2) \algname{$\|\text{EF21-IGT}\|$} becomes normalized SGD with IGT momentum~\citep{cutkosky2020momentum}, (3) \algname{$\|\text{EF21-RHM}\|$} becomes normalized second-order momentum methods analyzed by~\citet{salehkaleybar2022momentum}, (4) \algname{$\|\text{EF21-HM}\|$} becomes second-order momentum methods proposed by~\citet{tran2022better}, and (5) \algname{$\|\text{EF21-MVR}\|$} becomes STORM~\citep{cutkosky2019momentum}. 
Finally, notice that~\Cref{alg:EF_momentum} with $v_i^{t+1} = \nabla f_i(x^{t+1})$ recovers \algname{$\|\text{EF21}\|$} analyzed by~\citet{khirirat2024errorfeedbackl0l1smoothnessnormalization}.

\begin{algorithm}[t]
\caption{\algname{EF21} with Normalized Updates and Momentum Variants}
\label{alg:EF_momentum}
\begin{algorithmic}[1]
\STATE \textbf{Input:} Initial point $x^0, g_i^0,v_i^0 \in \mathbb{R}^d$, stepsize $\gamma_t > 0$, momentum parameter $0<\eta_t \leq 1$, compressor $\mathcal{C}_i^t$,  number of iterations $T$
\FOR{$t = 0, 1, \ldots, T-1$}
    \STATE Master computes $x^{t+1} = x^t - \gamma_t \frac{g^t}{\norm{g^t}}$, and broadcasts $x^{t+1}$ to all the clients 
    \FOR{Every client $i=1,2,\ldots,n$}
        \vspace{0.3em}
        \STATE \algname{$\|\text{EF21-SGDM}\|$}:
        \STATE  \quad Set $v_i^{t+1}=(1-\eta_t)v_i^t + \eta_t \nabla f_i \left( x^{t+1}  ;\xi_i^{t+1} \right)$

        \vspace{0.3em}
        \STATE \algname{$\|\text{EF21-IGT}\|$}:
        \STATE  \quad Set $v_i^{t+1}=(1-\eta_t)v_i^t + \eta_t \nabla f_i \left( y^t;\xi_i^{t+1} \right)$, where $y^t = x^{t+1} + \frac{1-\eta_t}{\eta_t} (x^{t+1}-x^t)$

        \vspace{0.3em}
        \STATE \algname{$\|\text{EF21-RHM}\|$}:
        \STATE  \quad Choose $\hat x^{t+1} = q_t x^{t+1} + (1-q_t) x^t$, where $q_t \sim \cU(0,1)$ 
        \STATE  \quad Set $v_i^{t+1}=(1-\eta_t)\tilde v_i^t + \eta_t \nabla f_i(x^{t+1};\xi_i^{t+1})$, where $\tilde v_i^t = v_i^t +  \nabla^2 f_i(\hat x^{t+1};\xi_i^{t+1}) (x^{t+1} - x^t)$

        \vspace{0.3em}
        \STATE \algname{$\|\text{EF21-HM}\|$}:
        \STATE  \quad Set $v_i^{t+1}=(1-\eta_t)\tilde v_i^t + \eta_t \nabla f_i(x^{t+1};\xi_i^{t+1})$, where $\tilde v_i^t = v_i^t + \nabla^2 f_i( x^{t+1};\xi_i^{t+1}) (x^{t+1} - x^t)$

        \vspace{0.3em}
        \STATE \algname{$\|\text{EF21-MVR}\|$}:
        \STATE  \quad Set $v_i^{t+1}=(1-\eta_t)\tilde v_i^t + \eta_t \nabla f_i(x^{t+1};\xi_i^{t+1})$, where $\tilde v^t_i =  v_i^t + \nabla f_i(x^{t+1};\xi_i^{t+1}) - \nabla f_i(x^t;\xi_i^{t+1})$

        \vspace{0.3cm}

        \STATE Transmit $c_i^{t+1} = \cC_i^{t+1}(v_i^{t+1}-g_i^t)$
        \STATE Update $g_i^{t+1}=g_i^t + c_i^{t+1}$ 
    \ENDFOR 
    \STATE Master receives $c_1^{t+1},\ldots,c_n^{t+1}$, and  computes $g^{t+1} = g^t + \frac{1}{n}\sum_{i=1}^n c_i^{t+1}$
\ENDFOR
\STATE \textbf{Output:} $x^T$
\end{algorithmic}
\end{algorithm}




\section{Convergence Theorems}
\label{sec:convergence}

To this end, we present the convergence of \Cref{alg:EF_momentum} that use five momentum variants: Polyak momentum, IGT momentum, MVR momentum in STORM, and two second-order momentum schemes.

\paragraph{$\|\text{EF21-SGDM}\|$.}
We begin by providing the convergence rate results for \algname{$\|\text{EF21-SGDM}\|$}.


\begin{theorem}\label{thm:sgdm}
Consider Problem~(\ref{eqn:problem}), where Assumptions~\ref{assum:contract_comp},~\ref{assum:inf},~\ref{assum:L_comp_fcn},~\ref{assum:L_fcn}, and~\ref{assum:stoc_g_h} hold. 
Let tuning parameters satisfy 
\begin{eqnarray*}
\eta_t = \left( \frac{2}{t+2} \right)^{1/2} \quad \text{and} \quad \gamma_t = \gamma_0 \left( \frac{2}{t+2} \right)^{3/4}
\end{eqnarray*}
with $\gamma_0 >0$. 
Then, the iterates $\{x^t\}$ governed by \algname{EF21-SGDM} 
satisfy
\begin{align*}
& \Exp{\norm{\nabla f(\tilde{x}^T)}}  \\
&= \widetilde{\cO}\left(\frac{\nicefrac{V_0}{\gamma_0} + \gamma_0\left(L + \nicefrac{\bar L}{\alpha^2}\right) + \sigma_g\left(\nicefrac{1}{\sqrt{n}} + \nicefrac{1}{\alpha^2}\right)}{T^{\nicefrac{1}{4}}}\right),
\end{align*}    
where $\tilde{x}^T$ is randomly chosen from $\{x^0,x^1,\ldots,x^{T-1}\}$ with probability $\nicefrac{\gamma_t}{\sum_{t=0}^{T-1}\gamma_t}$ for $t=0,1,\ldots,T-1$. Here, $\bar L = \frac{1}{n}\sum_{i=1}^n L_i$.
\end{theorem}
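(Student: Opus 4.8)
The plan is to combine three ingredients in the standard normalized-error-feedback-with-momentum template: a normalized descent inequality, an averaged momentum-error recursion, and an \algname{EF21} compression-error recursion, all stitched together through the weighted-sampling rule defining $\tilde x^T$. \emph{Descent step.} First I would apply $L$-smoothness (Assumption~\ref{assum:L_fcn}) along the normalized update $x^{t+1}=x^t-\gamma_t g^t/\norm{g^t}$, for which $\norm{x^{t+1}-x^t}=\gamma_t$ exactly, and feed in the elementary normalized-direction inequality $\inp{a}{b/\norm{b}}\ge\norm{a}-2\norm{a-b}$ with $a=\nabla f(x^t)$ and $b=g^t$. This produces
\[
\gamma_t\norm{\nabla f(x^t)} \le f(x^t)-f(x^{t+1}) + 2\gamma_t\norm{g^t-\nabla f(x^t)} + \tfrac{L\gamma_t^2}{2}.
\]
Summing over $t=0,\dots,T-1$, taking expectations, and dividing by $\sum_t\gamma_t$ turns the left-hand side into $\Exp{\norm{\nabla f(\tilde x^T)}}$ by the definition of the weighted output, so the theorem reduces to controlling the telescoped gap $f(x^0)-f^{\inf}$, the curvature sum $\tfrac{L}{2}\sum_t\gamma_t^2$, and the weighted error $\sum_t\gamma_t\Exp{\norm{g^t-\nabla f(x^t)}}$.

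\emph{Two recursions for the error.} I would decompose $\norm{g^t-\nabla f(x^t)}\le\norm{\bar\epsilon^t}+\tfrac1n\sum_i\norm{v_i^t-g_i^t}$, where $\bar v^t=\tfrac1n\sum_i v_i^t$, $\bar\epsilon^t=\bar v^t-\nabla f(x^t)$. For the first (momentum) piece, expanding the Polyak update gives $\epsilon_i^{t+1}=(1-\eta_t)\epsilon_i^t+(1-\eta_t)\big(\nabla f_i(x^t)-\nabla f_i(x^{t+1})\big)+\eta_t\zeta_i^{t+1}$ with $\zeta_i^{t+1}$ conditionally zero-mean; averaging over the independent clients, using that the cross terms vanish in conditional expectation, Young's inequality with parameter $\eta_t$, and the drift bound $\norm{\nabla f(x^{t+1})-\nabla f(x^t)}\le L\gamma_t$, yields
\[
\Exp{\sqnorm{\bar\epsilon^{t+1}}}\le(1-\eta_t)\Exp{\sqnorm{\bar\epsilon^t}}+\tfrac{2L^2\gamma_t^2}{\eta_t}+\tfrac{\eta_t^2\sigma_g^2}{n},
\]
where the $1/n$ is exactly the gain from averaging independent noise. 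For the second (compression) piece, I would write $g_i^{t+1}-v_i^{t+1}=\cC_i^{t+1}(v_i^{t+1}-g_i^t)-(v_i^{t+1}-g_i^t)$, apply Assumption~\ref{assum:contract_comp} and Young's inequality to get the \algname{EF21} contraction
\[
\Exp{\sqnorm{g_i^{t+1}-v_i^{t+1}}}\le(1-\tfrac{\alpha}{2})\Exp{\sqnorm{g_i^t-v_i^t}}+\tfrac{3}{\alpha}\Exp{\sqnorm{v_i^{t+1}-v_i^t}},
\]
and then bound the driving term using $\norm{v_i^{t+1}-v_i^t}=\eta_t\norm{\nabla f_i(x^{t+1};\xi_i^{t+1})-v_i^t}$ together with $\sqnorm{\nabla f_i(x^{t+1};\xi_i^{t+1})-v_i^t}\lesssim\sigma_g^2+L_i^2\gamma_t^2+\sqnorm{\epsilon_i^t}$ (in expectation).

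\emph{Plugging in the schedule.} With $\eta_t=(2/(t+2))^{1/2}$ and $\gamma_t=\gamma_0(2/(t+2))^{3/4}$ one has $\gamma_t^2/\eta_t\asymp\gamma_0^2(t+2)^{-1}$ and $\eta_t^2\asymp(t+2)^{-1}$, so the time-varying summation lemma of \citet{hubler2024parameter} gives $\Exp{\sqnorm{\bar\epsilon^t}}\lesssim(L^2\gamma_0^2+\sigma_g^2/n)(t+2)^{-1/2}$, and an analogous argument for the contraction at rate $1-\alpha/2$ driven by $\cO(\eta_t^2/\alpha)$ terms yields the corresponding $\alpha$-dependent bound for the compression error. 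Taking square roots, weighting by $\gamma_t$, and using $\sum_t\gamma_t\asymp\gamma_0 T^{1/4}$, $\sum_t\gamma_t^2=\cO(\gamma_0^2)$, and $\sum_t(t+2)^{-1}=\cO(\log T)$, and substituting back into the descent bound, collects precisely the terms $V_0/\gamma_0$, $\gamma_0(L+\bar L/\alpha^2)$, and $\sigma_g(1/\sqrt n+1/\alpha^2)$, each divided by $T^{1/4}$, with the logarithm absorbed into $\widetilde\cO$; here $L$ and $\sigma_g/\sqrt n$ arise from the momentum error while $\bar L$ and the $\alpha^{-2}$ factors arise from the compression error.

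\emph{Main obstacle.} I expect the crux to be the pair of coupled, time-varying recursions rather than the descent step or the normalized-direction inequality, which are routine. The momentum contraction factor $1-\eta_t$ vanishes as $t\to\infty$, so the usual geometric-sum argument fails and one must invoke the schedule-specific summation lemma that balances the $\gamma_t^2/\eta_t$ drift against the $\eta_t^2$ noise to extract the $(t+2)^{-1/2}$ decay; at the same time the compression error feeds on $\norm{v_i^{t+1}-v_i^t}$, hence indirectly on $\sqnorm{\epsilon_i^t}$, and must be closed without using the constants $L,\bar L$ that a tuned stepsize would otherwise exploit. Reconciling the constants in these two bounds with the fixed, parameter-agnostic schedule---so that the final coefficients depend only on $\gamma_0,\alpha,n$ and the problem constants in the stated combination---is where the real work lies.
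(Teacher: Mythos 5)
Your proposal is correct and shares the paper's overall skeleton: the same normalized descent inequality (via $\inp{a}{b/\norm{b}}\ge\norm{a}-2\norm{a-b}$), the same decomposition of $\norm{g^t-\nabla f(x^t)}$ into an averaged momentum error plus a per-client compression error, and the same weighted-sampling assembly under the schedule $\eta_t=(2/(t+2))^{1/2}$, $\gamma_t=\gamma_0(2/(t+2))^{3/4}$. Where you genuinely diverge is in the machinery controlling the coupled recursions. The paper runs the entire argument in \emph{first} moments: it bounds $\cV_t=\frac{1}{n}\sum_{i=1}^n\norm{g_i^t-v_i^t}$ through a $\sqrt{1-\alpha}$ contraction, bounds the per-client momentum error $\cU_t=\frac{1}{n}\sum_{i=1}^n\norm{v_i^t-\nabla f_i(x^t)}$ by a $(1-\eta_t)$ recursion, unrolls the \emph{averaged} momentum error explicitly (its Lemma~\ref{lemma:summing_the_recursion}) so that conditional orthogonality of the noise increments can be exploited inside an $L^2$ norm to produce $\sigma_g/\sqrt{n}$, and then couples everything with a time-varying Lyapunov function $V_t=\Delta_t+C_{1,t}\cV_t+C_{2,t}\cU_t$, $C_{1,t},C_{2,t}\propto\gamma_t$, whose coefficients are chosen so the cross terms telescope. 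You instead work with \emph{second} moments plus Young's inequality, obtaining $(1-\eta_t)$ and $(1-\alpha/2)$ contractions, solve each recursion separately for explicit $(t+2)^{-1/2}$ decay via a schedule-specific summation lemma, and take square roots at the end, with no Lyapunov function. Both routes are sound and deliver the stated $\widetilde{\cO}(T^{-1/4})$ bound; your $L^2$ route is more modular and, because the compression recursion contracts at the constant rate $1-\alpha/2$, it actually yields a $1/\alpha$ rather than $1/\alpha^2$ dependence after the square root, which is consistent with (indeed slightly sharper than) the theorem, while the paper's $L^1$ route keeps constants linear in the $L_i$ and lets the Lyapunov coefficients do the bookkeeping you perform by hand when recombining the separately solved recursions.

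One step you flag but do not write out: closing the compression recursion requires a third, per-client momentum-error recursion (the analogue of your averaged one but without the $1/n$ noise gain), i.e.\ $\Exp{\sqnorm{v_i^{t+1}-\nabla f_i(x^{t+1})}}\le(1-\eta_t)\Exp{\sqnorm{v_i^t-\nabla f_i(x^t)}}+\frac{L_i^2\gamma_t^2}{\eta_t}+\eta_t^2\sigma_g^2$. This is exactly the role of the paper's ``Error bound II''; deriving it mirrors your averaged recursion and is routine, so this is an omission of detail rather than a gap in the argument.
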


From~\Cref{thm:sgdm}, \algname{$\|\text{EF21-SGDM}\|$} achieves the $\tilde{\cO}(1/T^{1/4})$ convergence in the gradient norm.
This result holds without requiring the stepsizes to depend on problem-dependent parameters (e.g. smoothness constants), but still yields the convergence almost matching the $\cO(1/T^{1/4})$ convergence bound for \algname{$\|\text{EF21-SGDM}\|$} by~\citet{khirirat2024errorfeedbackl0l1smoothnessnormalization} and for \algname{EF21-SGDM} analyzed by~\citet{fatkhullin2023momentum}.
Furthermore, unlike Theorem 3 in \citet{fatkhullin2023momentum}, our result does not rely on initializing the algorithm with a sufficiently large mini-batch size. 
Finally, by setting the compression operator  $\mathcal{C}^t(\cdot)$ be the identity operator and by letting $n=1$, our result recovers the same 
$\tilde{\cO}(1/T^{1/4})$ convergence rate as that of normalized stochastic momentum methods in the centralized setting by~\citet{hubler2024parameter}.




\paragraph{$\|\text{EF21-IGT}\|$.}
To further improve the convergence performance of \algname{$\|\text{EF21-SGDM}\|$}, we replace Polyak momentum with IGT momentum~\citep{cutkosky2020momentum} in~\Cref{alg:EF_momentum}.
The resulting algorithm, referred to as \algname{$\|\text{EF21-IGT}\|$}, admits the following convergence guarantee: 
\begin{theorem}\label{thm:IGT}
Consider Problem~(\ref{eqn:problem}), where Assumptions~\ref{assum:contract_comp},~\ref{assum:inf},~\ref{assum:L_comp_fcn},~\ref{assum:L_fcn},~\ref{assum:L_Hessian_comp_fcn},~\ref{assum:L_Hessian_fcn} and~\ref{assum:stoc_g_h} hold. 
Let tuning parameters satisfy 
\begin{eqnarray*}
\eta_t = \left( \frac{2}{t+2} \right)^{4/7} \quad \text{and} \quad \gamma_t = \gamma_0 \left( \frac{2}{t+2} \right)^{5/7} 
\end{eqnarray*}
with $\gamma_0 > 0$. 
Then, the iterates $\{x^t\}$ governed by \algname{EF21-IGT} 
satisfy
\begin{align*}
&\Exp{\norm{\nabla f(\tilde{x}^T)}} \\
& = \widetilde{\cO}\left(\frac{\nicefrac{V_0}{\gamma_0} + \sigma_g\left(\nicefrac{1}{\sqrt{n}} + \nicefrac{1}{\alpha^2}\right) + \gamma_0B_1 + \gamma_0^2B_2}{T^{\nicefrac{2}{7}}}\right),
\end{align*} 
where  $\tilde{x}^T$ is randomly chosen from $\{x^0,x^1,\ldots,x^{T-1}\}$ with probability $\nicefrac{\gamma_t}{\sum_{t=0}^{T-1}\gamma_t}$ for $t=0,1,\ldots,T-1$.
Here, $B_1 = L+\nicefrac{\bar L}{\alpha^2}$ and $B_2 = L_h +\nicefrac{\bar L_h}{\alpha^2}$. Here, $\bar L =\frac{1}{n}\sum_{i=1}^n L_i$ and $\bar L_h = \frac{1}{n}\sum_{i=1}^n L_{h,i}$.
\end{theorem}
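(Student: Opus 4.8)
The plan is to control the single quantity $\Exp{\norm{g^t - \nabla f(x^t)}}$ appearing in a normalized descent inequality, and to split it into a \emph{compression} part and a \emph{momentum} part. First I would establish the normalized descent lemma: since $x^{t+1} = x^t - \gamma_t g^t/\norm{g^t}$, Assumption~\ref{assum:L_fcn} gives $f(x^{t+1}) \le f(x^t) - \gamma_t \inp{\nabla f(x^t)}{g^t}/\norm{g^t} + \tfrac{L}{2}\gamma_t^2$, and the elementary inequality $-\inp{a}{b}/\norm{b} \le -\norm{a} + 2\norm{a-b}$ (with $a=\nabla f(x^t)$, $b=g^t$) turns this into
\begin{equation*}
\gamma_t\Exp{\norm{\nabla f(x^t)}} \le \Exp{f(x^t) - f(x^{t+1})} + 2\gamma_t\Exp{\norm{g^t - \nabla f(x^t)}} + \tfrac{L}{2}\gamma_t^2 .
\end{equation*}
Writing $\bar v^t = \tfrac1n\sum_i v_i^t$ and $\norm{g^t - \nabla f(x^t)} \le \norm{g^t - \bar v^t} + \norm{\bar v^t - \nabla f(x^t)} \le \sqrt{G^t} + \norm{\epsilon^t}$, where $G^t = \tfrac1n\sum_i\sqnorm{g_i^t - v_i^t}$ is the \algname{EF21} tracking error and $\epsilon^t = \bar v^t - \nabla f(x^t)$ is the averaged momentum error, reduces everything to bounding these two sequences. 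Because normalization forces the descent term to be \emph{linear} in the error, I would control $\Exp{\sqrt{G^t}}$ and $\Exp{\norm{\epsilon^t}}$ through $L^2$ recursions and pass to $L^1$ via Jensen.

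The crux of the IGT analysis is the momentum-error recursion, and this is where Assumptions~\ref{assum:L_Hessian_comp_fcn}--\ref{assum:L_Hessian_fcn} enter. Writing $\epsilon^{t+1} = (1-\eta_t)\epsilon^t + b^t + \eta_t\zeta^{t+1}$, where $\zeta^{t+1}$ is mean-zero stochastic noise and $b^t = (1-\eta_t)\nabla f(x^t) + \eta_t\nabla f(y^t) - \nabla f(x^{t+1})$ is the deterministic bias, the key observation is that the extrapolation $y^t = x^{t+1} + \tfrac{1-\eta_t}{\eta_t}(x^{t+1}-x^t)$ is chosen precisely so that the first-order (Hessian) terms in the Taylor expansions of $\nabla f(x^t)$ and $\nabla f(y^t)$ about $x^{t+1}$ cancel. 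Hessian-Lipschitzness then bounds the residual by $\norm{b^t} = \cO(L_h \gamma_t^2/\eta_t)$, using the \emph{exact} displacement identities $\norm{x^{t+1}-x^t} = \gamma_t$ and $\norm{y^t-x^{t+1}} = \tfrac{1-\eta_t}{\eta_t}\gamma_t$ furnished by normalization. Squaring the recursion, killing the cross term $\Exp{\inp{(1-\eta_t)\epsilon^t}{\zeta^{t+1}}}=0$ and applying Young's inequality to the $b^t$ cross term, yields
\begin{equation*}
\Exp{\sqnorm{\epsilon^{t+1}}} \le (1-\eta_t)\Exp{\sqnorm{\epsilon^t}} + \tfrac{1}{\eta_t}\Exp{\sqnorm{b^t}} + \eta_t^2\,\tfrac{\sigma_g^2}{n},
\end{equation*}
the $1/n$ coming from averaging the independent client noises. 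The identical computation applied per client (without the $1/n$, with $L_{h,i}$) controls $\tfrac1n\sum_i\Exp{\sqnorm{v_i^t - \nabla f_i(x^t)}}$, which I will need for the compression term.

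Next I would treat the tracking error via the standard \algname{EF21} contraction for Assumption~\ref{assum:contract_comp}: choosing the Young parameter of order $\alpha$ gives $\Exp{G^{t+1}} \le (1-\tfrac{\alpha}{2})\Exp{G^t} + \tfrac{c}{\alpha}\Exp{\tfrac1n\sum_i\sqnorm{v_i^{t+1}-v_i^t}}$ for a constant $c$. Since $v_i^{t+1}-v_i^t = \eta_t(\nabla f_i(y^t;\xi_i^{t+1}) - v_i^t)$, the drift decomposes into noise ($\eta_t^2\sigma_g^2$), a smoothness term using $\norm{y^t-x^t} = \gamma_t/\eta_t$ (giving $\bar L^2\gamma_t^2$), and the per-client momentum error; solving the geometric recursion produces $G^t \lesssim \alpha^{-2}$ times these drifts, so $\Exp{\sqrt{G^t}}$ carries a factor $\alpha^{-1}$, which I bound by $\alpha^{-2}$ for a unified statement. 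Finally, I would solve the two $L^2$ recursions for the schedule $\eta_t = (2/(t+2))^{4/7}$, $\gamma_t = \gamma_0(2/(t+2))^{5/7}$ — for which $\sqrt{\eta_t}\propto(t+2)^{-2/7}$ and $\gamma_t^2/\eta_t^2\propto(t+2)^{-2/7}$ both match the target rate — weight the descent inequality by $\gamma_t$, telescope, and divide by $\sum_{t}\gamma_t = \Theta(\gamma_0 T^{2/7})$. The probabilistic output rule converts $\sum_t\gamma_t\Exp{\norm{\nabla f(x^t)}}/\sum_t\gamma_t$ into $\Exp{\norm{\nabla f(\tilde x^T)}}$, and the logarithm in $\widetilde{\cO}$ appears because the momentum contributions sum as $\sum_t\gamma_t\cdot(t+2)^{-2/7} \propto \sum_t (t+2)^{-1} = \Theta(\log T)$.

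The main obstacle I anticipate is the coupling among the three error sequences: the descent needs $\Exp{\sqrt{G^t}}$ and $\Exp{\norm{\epsilon^t}}$, the recursion for $G^t$ feeds on the per-client quantity $\tfrac1n\sum_i\sqnorm{v_i^t-\nabla f_i(x^t)}$, and all three carry the time-varying coefficients $\eta_t,\gamma_t$. Resolving this cleanly requires either a carefully weighted Lyapunov function $f(x^t) - f^{\inf} + a_t\sqrt{G^t} + b_t(\tfrac1n\sum_i\sqnorm{v_i^t - \nabla f_i(x^t)})^{1/2} + \ldots$ with matched decay rates, or solving the recursions in sequence while verifying that the residual products of $(1-\eta_t)$ and $(1-\alpha/2)$ do not disturb the slowly-varying local equilibrium $\Exp{\sqnorm{\epsilon^t}}\approx \eta_t\sigma_g^2/n + L_h^2\gamma_t^4/\eta_t^4$. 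The delicate point is that the IGT bias is second-order \emph{only} because of those exact displacement identities, so any slack introduced by the stochastic step direction must be shown not to reintroduce first-order Hessian terms.
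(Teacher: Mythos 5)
Your plan is sound and would deliver the theorem; its architecture — normalized descent lemma, the same three error sequences (compressor tracking error, per-client momentum error, averaged momentum error), the IGT Taylor cancellation via the exact displacement identities $\norm{x^{t+1}-x^t}=\gamma_t$ and $\norm{y^t-x^{t+1}}=\frac{1-\eta_t}{\eta_t}\gamma_t$, compressor contraction, and telescoping under the given schedule — coincides with the paper's, but your technical execution differs in one substantive way: the moment order of the recursions. You run second-moment ($L^2$) recursions throughout, i.e.\ the standard EF21-style contraction $\Exp{G^{t+1}}\le(1-\nicefrac{\alpha}{2})\Exp{G^t}+\frac{c}{\alpha}\Exp{\frac{1}{n}\sum_i\sqnorm{v_i^{t+1}-v_i^t}}$ obtained via Young's inequality, a martingale argument for $\Exp{\sqnorm{\epsilon^t}}$, and a local-equilibrium analysis, passing to first moments only at the end by Jensen. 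The paper instead runs first-moment ($L^1$) recursions directly: its Lemma~\ref{lemma:boundg_and_v} gives $\Exp{\cV_{t+1}}\le\sqrt{1-\alpha}\,\Exp{\cV_t}+\frac{\sqrt{1-\alpha}}{n}\sum_i\Exp{\norm{v_i^{t+1}-v_i^t}}$ with no Young step; the averaged momentum error is not iterated but unrolled exactly (Lemma~\ref{lemma:summing_the_recursion}), with noise orthogonality exploited inside a single Jensen step to produce the $\nicefrac{\sigma_g}{\sqrt{n}}$ term; and sums of products of $(1-\eta_\tau)$ are controlled by a dedicated summation lemma (Lemma~\ref{lemma:sum_prod}), which is precisely the rigorous counterpart of your ``slowly-varying equilibrium'' heuristic. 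The coupling you flag as the main obstacle is resolved in the paper exactly the way you anticipate: a Lyapunov function $V_t=\Delta_t+C_{1,t}\cV_t+C_{2,t}\cU_t$ with time-varying weights $C_{1,t}=\frac{2\gamma_t}{1-\sqrt{1-\alpha}}$ and $C_{2,t}=\frac{2\gamma_t\sqrt{1-\alpha}}{1-\sqrt{1-\alpha}}$ chosen so that $2\gamma_t+C_{1,t}\sqrt{1-\alpha}=C_{1,t}$ and $C_{1,t}\sqrt{1-\alpha}\,\eta_t+C_{2,t}(1-\eta_t)=C_{2,t}$, making the recursion telescope. Both routes yield the same terms ($\nicefrac{V_0}{\gamma_0}$, $\sigma_g(\nicefrac{1}{\sqrt{n}}+\nicefrac{1}{\alpha^2})$, $\gamma_0 B_1$, $\gamma_0^2 B_2$) and the same $\widetilde{\cO}(T^{-\nicefrac{2}{7}})$ rate; your $L^2$ route reuses more standard machinery and obtains the $\nicefrac{1}{n}$ variance reduction automatically from averaging independent client noises, while the paper's $L^1$ route avoids Young's inequality, keeps the $\alpha$-dependence and constants cleaner, and makes the time-varying bookkeeping fully explicit. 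The only piece still owed in your plan is a rigorous analogue of Lemma~\ref{lemma:sum_prod} for your $L^2$ recursions, which is routine for the polynomial schedules at hand; also note that your final worry about the stochastic step direction is moot, since normalization makes the displacement lengths deterministic regardless of the direction $\nicefrac{g^t}{\norm{g^t}}$, so the second-order nature of the IGT bias is never at risk.
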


From~\Cref{thm:IGT}, \algname{$\|\text{EF21-IGT}\|$}, which uses decreasing, parameter-agnostic stepsize rules, attains the $\tilde{\cO}(1/T^{2/7})$ convergence in the gradient norm. 
When $\mathcal{C}^t(\cdot) = I$ and $n=1$, this result recovers the same $\tilde{\cO}(1/T^{2/7})$ rate as stochastic methods with IGT momentum in the centralized setting--specifically, Theorem 3 of~\citet{cutkosky2020momentum}, Theorem 6 of~\citet{cutkosky2021high} in the bounded variance case,  and  Theorem 2 of~\citet{sun2023momentum}.
Unlike these prior results, however, our method achieves this rate without requiring stepsizes to know smoothness constants.

\paragraph{$\|\text{EF21-RHM}\|$.}
Another momentum variant we incorporate into \algname{EF21} methods using momentum (\Cref{alg:EF_momentum}) to enhance their convergence performance is second-order momentum proposed by~\citet{salehkaleybar2022momentum}. 
We refer to the resulting algorithm as~\algname{$\|\text{EF21-RHM}\|$}, and its convergence guarantee is presented below:  
\begin{theorem}\label{thm:RHM}
Consider Problem~(\ref{eqn:problem}), where Assumptions~\ref{assum:contract_comp},~\ref{assum:inf},~\ref{assum:L_comp_fcn},~\ref{assum:L_fcn}
and~\ref{assum:stoc_g_h} hold. 
Let tuning parameters satisfy 
\begin{eqnarray*}
\eta_t = \left( \frac{2}{t+2} \right)^{2/3}\quad \text{and} \quad \gamma_t = \gamma_0 \left( \frac{2}{t+2} \right)^{2/3}
\end{eqnarray*}
with $\gamma_0 >0$. 
Then, the iterates $\{x^t\}$ governed by \algname{EF21-RHM} 
satisfy
\begin{align*}
&   \Exp{\norm{\nabla f(\tilde{x}^T)}} \\
&= \widetilde{\cO}\left(\frac{\nicefrac{V_0}{\gamma_0}+ \left(\sigma_g + \gamma_0\sigma_h\right)\left(\nicefrac{1}{\sqrt{n}}+\nicefrac{1}{\alpha^2}\right) + \gamma_0 C }{T^{\nicefrac{1}{3}}}\right),
\end{align*}  
where $\tilde{x}^T$ is randomly chosen from $\{x^0,x^1,\ldots,x^{T-1}\}$ with probability $\nicefrac{\gamma_t}{\sum_{t=0}^{T-1}\gamma_t}$ for $t=0,1,\ldots,T-1$. Here, $C= L +\nicefrac{\bar L}{\alpha^2}$ and $\bar L = \frac{1}{n}\sum_{i=1}^n L_i$.
\end{theorem}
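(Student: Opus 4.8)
The plan is to run the standard three-part error-feedback argument—descent, compression-error contraction, and momentum-error control—specialized to the normalized step and the RHM estimator. First I would establish a normalized descent inequality: since the normalized update forces $\norm{x^{t+1}-x^t}=\gamma_t$ exactly, $L$-smoothness of $f$ (Assumption~\ref{assum:L_fcn}) combined with the elementary bound $\inp{a}{b/\norm{b}} \ge \norm{a} - 2\norm{a-b}$ gives
\[
f(x^{t+1}) \le f(x^t) - \gamma_t\norm{\nabla f(x^t)} + 2\gamma_t\norm{g^t-\nabla f(x^t)} + \tfrac{L}{2}\gamma_t^2 .
\]
I would then split the estimation error as $\norm{g^t-\nabla f(x^t)} \le \norm{g^t-\bar v^t} + \norm{\bar v^t - \nabla f(x^t)}$, where $\bar v^t = \tfrac1n\sum_i v_i^t$; the first piece is the compression error and the second is the momentum (gradient-estimation) error.

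The crux specific to RHM is the momentum-error recursion. Writing $\epsilon_i^t = v_i^t - \nabla f_i(x^t)$ and substituting the update, I would obtain
\[
\epsilon_i^{t+1} = (1-\eta_t)\epsilon_i^t + (1-\eta_t)Z_i^t + \eta_t W_i^t ,
\]
with $W_i^t = \nabla f_i(x^{t+1};\xi_i^{t+1}) - \nabla f_i(x^{t+1})$ the gradient noise and $Z_i^t = \nabla^2 f_i(\hat x^{t+1};\xi_i^{t+1})(x^{t+1}-x^t) - \br{\nabla f_i(x^{t+1})-\nabla f_i(x^t)}$. The key observation is that drawing $q_t\sim\cU(0,1)$ makes the Hessian-vector correction an unbiased estimate of the gradient increment, since $\int_0^1 \nabla^2 f_i(qx^{t+1}+(1-q)x^t)\,dq\,(x^{t+1}-x^t) = \nabla f_i(x^{t+1})-\nabla f_i(x^t)$ by the fundamental theorem of calculus; hence $\Exp{Z_i^t\mid\cF^t}=0$. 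This is precisely why RHM avoids the Hessian-Lipschitz assumption needed by the endpoint variant. Conditioning on $\cF^t$ (everything through $x^{t+1}$) the cross term with $\epsilon_i^t$ vanishes, and using $\norm{x^{t+1}-x^t}=\gamma_t$ together with Assumptions~\ref{assum:L_comp_fcn} and~\ref{assum:stoc_g_h} I would bound $\Exp{\norm{Z_i^t}^2}\lesssim(L_i^2+\sigma_h^2)\gamma_t^2$ and $\Exp{\norm{W_i^t}^2}\le\sigma_g^2$, with the cross-client average of $W_i^t$ gaining a $1/n$ variance reduction. Unrolling the squared recursion under the schedules $\eta_t,\gamma_t\sim t^{-2/3}$ yields $\Exp{\norm{\bar v^t-\nabla f(x^t)}^2}\lesssim \eta_t\sigma_g^2/n + (\gamma_t^2/\eta_t)(L^2+\sigma_h^2)$ for the averaged error, and an analogous per-client bound without the $1/n$.

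Next I would handle the compression error via the EF21 contraction. From $g_i^{t+1}=g_i^t+\cC_i^{t+1}(v_i^{t+1}-g_i^t)$ and Assumption~\ref{assum:contract_comp}, a Young-inequality step gives, with $G^t:=\tfrac1n\sum_i\Exp{\norm{g_i^t-v_i^t}^2}$,
\[
G^{t+1} \le \br{1-\tfrac{\alpha}{2}}G^t + \tfrac{2}{\alpha}\,\tfrac1n\sum_i\Exp{\norm{v_i^{t+1}-v_i^t}^2}.
\]
Using $v_i^{t+1}-v_i^t = \eta_t(\nabla f_i(x^{t+1};\xi_i^{t+1})-v_i^t) + (1-\eta_t)\nabla^2 f_i(\hat x^{t+1};\xi_i^{t+1})(x^{t+1}-x^t)$ together with $\norm{x^{t+1}-x^t}=\gamma_t$, the increment is controlled by $\eta_t^2(\sigma_g^2 + L_i^2\gamma_t^2 + \Exp{\norm{\epsilon_i^t}^2}) + \gamma_t^2(L_i^2+\sigma_h^2)$, which couples the compression recursion to the per-client momentum error. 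Summing the time-varying recursion (the fixed factor $1-\alpha/2$ against the decaying injected noise) produces the $1/\alpha^2$ amplification seen in the final bound, i.e. contributions of the form $(\sigma_g+\gamma_0\sigma_h)/\alpha^2$ and $\bar L\gamma_0/\alpha^2$.

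Finally I would assemble the pieces by taking the $\gamma_t$-weighted average of the descent inequality over $t=0,\dots,T-1$, telescoping $f(x^t)-f(x^{t+1})$ against $f(x^0)-f^{\inf}$, and inserting the two error bounds. The normalization $\sum_t\gamma_t\sim\gamma_0T^{1/3}$ produces the $V_0/(\gamma_0 T^{1/3})$ and $\gamma_0 C/T^{1/3}$ terms, while $\sum_t\gamma_t\Exp{\norm{\bar v^t-\nabla f(x^t)}}\sim\sum_t t^{-2/3}\cdot t^{-1/3}=\sum_t t^{-1}\sim\log T$ produces the logarithmic factor, giving the stated $\tilde{\cO}(1/T^{1/3})$ rate with the $(\sigma_g+\gamma_0\sigma_h)(1/\sqrt n+1/\alpha^2)$ structure. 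I expect the main obstacle to be disentangling the two-way coupling between the compression recursion and the per-client momentum errors: both the increment driving $G^t$ and the momentum error depend on each other through $\epsilon_i^t$, so one must either set up a joint Lyapunov function or prove a priori boundedness of $\tfrac1n\sum_i\Exp{\norm{\epsilon_i^t}^2}$ under the decaying schedules before closing the descent estimate. Managing the time-varying contraction sum (a fixed $1-\alpha/2$ geometric factor convolved with $t^{-4/3}$-type injected noise) and verifying that the $\gamma_t^2/\eta_t$ and $\eta_t$ terms both decay at the $t^{-1/3}$ rate is the other delicate accounting step.
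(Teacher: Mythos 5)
Your proposal is correct and follows essentially the same route as the paper's proof: a normalized descent lemma, a split into the compression error $\cV_t=\frac1n\sum_i\norm{g_i^t-v_i^t}$ and the momentum error (with the averaged error $\norm{v^t-\nabla f(x^t)}$ handled by unrolling a martingale-type recursion whose mean-zero property is exactly the FTC-unbiasedness of the randomized Hessian-vector correction you highlight, and which delivers the $1/\sqrt{n}$ reduction), all combined through a time-varying Lyapunov function $V_t=\Delta_t+C_{1,t}\cV_t+C_{2,t}\cU_t$ and the same weighted-sum accounting under $\eta_t,\gamma_t\sim t^{-2/3}$. The only differences are bookkeeping—the paper runs the recursions in first moments (contraction factor $\sqrt{1-\alpha}$ via Jensen) rather than in squared norms with Young's inequality—and the two-way coupling you flag as the main obstacle is in fact only one-way, since normalization forces $\norm{x^{t+1}-x^t}=\gamma_t$ and hence makes the momentum-error recursion independent of the compression error, so the joint Lyapunov function you propose closes the argument exactly as in the paper.
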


From~\Cref{thm:RHM}, \algname{$\|\text{EF21-RHM}\|$} achieves the $\tilde{\cO}(1/T^{1/3})$ convergence in the gradient norm, thus improving upon the rates of \algname{$\|\text{EF21-SGDM}\|$} and \algname{$\|\text{EF21-IGT}\|$}.
When $\mathcal{C}^t(\cdot) = I$ and $n=1$, our result nearly matches the ${\cO}(1/T^{1/3})$ convergence for stochastic second-order momentum methods in the centralized setting  by~\citet{salehkaleybar2022momentum}, Theorem 2 of \citet{sadiev2025second} in the bounded variance case.

\paragraph{$\|\text{EF21-HM}\|$.}
In addition to the second-order momentum by~\citet{salehkaleybar2022momentum}, we also incorporate the second-order momentum proposed by~\citet{tran2022better} into~\Cref{alg:EF_momentum}. 
The resulting algorithm, which we call \algname{$\|\text{EF21-HM}\|$}, achieves the following convergence rate:  
\begin{theorem}\label{thm:HM}
Consider Problem~(\ref{eqn:problem}), where Assumptions~\ref{assum:contract_comp},~\ref{assum:inf},
~\ref{assum:L_fcn},
~\ref{assum:L_Hessian_comp_fcn}
and~\ref{assum:stoc_g_h} hold. 
Let tuning parameters satisfy 
\begin{eqnarray*}
\eta_t = \left( \frac{2}{t+2} \right)^{2/3}  \quad \text{and} \quad \gamma_t = \gamma_0 \left( \frac{2}{t+2} \right)^{2/3}
\end{eqnarray*}
with $\gamma_0 >0$. 
Then, the iterates $\{x^t\}$ governed by \algname{EF21-HM} 
satisfy
\begin{align*}
&    \Exp{\norm{\nabla f(\tilde{x}^T)}} \\
& = \widetilde{\cO}\left(\frac{\nicefrac{V_0}{\gamma_0} + D_\sigma\left(\nicefrac{1}{\sqrt{n}} + \nicefrac{1}{\alpha^2}\right)+ \gamma_0D_1 +  \gamma^2_0D_2 }{T^{\nicefrac{1}{3}}}\right),
\end{align*}    
where $\tilde{x}^T$ is randomly chosen from $\{x^0,x^1,\ldots,x^{T-1}\}$ with probability $\nicefrac{\gamma_t}{\sum_{t=0}^{T-1}\gamma_t}$ for $t=0,1,\ldots,T-1$. Here, $D_\sigma=\sigma_g + \gamma_0\sigma_h$, $D_1 =L + \nicefrac{\bar L}{\alpha^2}$,  $D_2 = L_h +\nicefrac{\bar L_h}{\alpha^2}$, $\bar L = \frac{1}{n}\sum_{i=1}^n L_i$, and $\bar L_h = \frac{1}{n}\sum_{i=1}^n L_{h,i}$.
\end{theorem}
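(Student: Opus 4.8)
The plan is to reduce the theorem to three coupled one-step inequalities and then combine them through a single $\gamma_t$-weighted sum. I would start from the \emph{normalized descent inequality}: because the update $x^{t+1}=x^t-\gamma_t g^t/\norm{g^t}$ has the deterministic step length $\norm{x^{t+1}-x^t}=\gamma_t$, combining $L$-smoothness (\Cref{assum:L_fcn}) with the elementary bound $\inp{\nabla f(x^t)}{g^t/\norm{g^t}}\ge\norm{\nabla f(x^t)}-2\norm{g^t-\nabla f(x^t)}$ yields
\begin{align*}
f(x^{t+1})\le f(x^t)-\gamma_t\norm{\nabla f(x^t)}+2\gamma_t\norm{g^t-\nabla f(x^t)}+\tfrac{L}{2}\gamma_t^2 .
\end{align*}
Writing $v^t=\tfrac1n\sum_{i}v_i^t$ and splitting $\norm{g^t-\nabla f(x^t)}\le\norm{g^t-v^t}+\norm{v^t-\nabla f(x^t)}$ isolates a \emph{compression error} and a \emph{momentum error}, each of which I bound by its own recursion and then feed back into the telescoped descent sum.

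Next I would derive the \emph{momentum-error recursion} for $\epsilon_i^t=v_i^t-\nabla f_i(x^t)$. Expanding the \algname{$\|\text{EF21-HM}\|$} update and rearranging gives
\begin{align*}
\epsilon_i^{t+1}=(1-\eta_t)\epsilon_i^t+(1-\eta_t)S_i^{t+1}+\eta_t Z_i^{t+1},
\end{align*}
with gradient noise $Z_i^{t+1}=\nabla f_i(x^{t+1};\xi_i^{t+1})-\nabla f_i(x^{t+1})$ and Hessian-correction term $S_i^{t+1}=\nabla f_i(x^t)-\nabla f_i(x^{t+1})+\nabla^2 f_i(x^{t+1};\xi_i^{t+1})(x^{t+1}-x^t)$. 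The key is to split $S_i^{t+1}$ into its conditional mean and a fluctuation: the mean is a second-order Taylor remainder of size $\tfrac{L_{h,i}}{2}\gamma_t^2$ by \Cref{assum:L_Hessian_comp_fcn} (again using $\norm{x^{t+1}-x^t}=\gamma_t$), while the fluctuation $(\nabla^2 f_i(x^{t+1};\xi_i^{t+1})-\nabla^2 f_i(x^{t+1}))(x^{t+1}-x^t)$ has second moment at most $\sigma_h^2\gamma_t^2$ by \Cref{assum:stoc_g_h}. Averaging over clients and exploiting independence of $\{\xi_i^{t+1}\}_i$ to gain the factor $1/n$ on the variances, I obtain for $\bar\epsilon^t=\tfrac1n\sum_i\epsilon_i^t$ a recursion of the form
\begin{align*}
\Exp{\norm{\bar\epsilon^{t+1}}^2}\le(1-\eta_t)\Exp{\norm{\bar\epsilon^t}^2}+\tfrac{\bar L_h^2\gamma_t^4}{2\eta_t}+\tfrac{2\sigma_h^2\gamma_t^2}{n}+\tfrac{2\eta_t^2\sigma_g^2}{n}.
\end{align*}

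For the compression error I would use the \emph{error-feedback contraction}. Applying \Cref{assum:contract_comp} to $g_i^{t+1}-v_i^{t+1}=\cC_i^{t+1}(v_i^{t+1}-g_i^t)-(v_i^{t+1}-g_i^t)$ together with Young's inequality tuned to $\alpha$, the averaged memory error $G^t=\tfrac1n\sum_i\norm{g_i^t-v_i^t}^2$ satisfies
\begin{align*}
\Exp{G^{t+1}}\le\big(1-\tfrac\alpha2\big)\Exp{G^t}+\tfrac2\alpha\cdot\tfrac1n\sum_i\Exp{\norm{v_i^{t+1}-v_i^t}^2},
\end{align*}
and the increment $v_i^{t+1}-v_i^t=\eta_t(\nabla f_i(x^{t+1};\xi_i^{t+1})-v_i^t)+(1-\eta_t)\nabla^2 f_i(x^{t+1};\xi_i^{t+1})(x^{t+1}-x^t)$ is bounded in mean square by the gradient noise, the per-client momentum error $\Exp{\norm{\epsilon_i^t}^2}$, and a $\gamma_t^2$-order curvature term, where the stochastic Hessian-vector product is controlled by $(L^2+\sigma_h^2)\gamma_t^2$ using global smoothness (\Cref{assum:L_fcn}, giving $\norm{\nabla^2 f}\le L$) and the global-referenced variance bounds of \Cref{assum:stoc_g_h}. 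Since this increment involves the \emph{per-client} momentum error (which does not enjoy the $1/n$ gain), I would carry a second copy of the recursion of the previous paragraph without the $1/n$ factor to close the coupling. Finally I would invoke a technical lemma for recursions $a_{t+1}\le(1-c\eta_t)a_t+r_t$ under the schedule $\eta_t=(2/(t+2))^{2/3}$, $\gamma_t=\gamma_0\eta_t$, yielding $a_t=\widetilde{\cO}(r_t/\eta_t)$; plugging the resulting bounds on $\sqrt{\Exp{G^t}}$ and $\sqrt{\Exp{\norm{\bar\epsilon^t}^2}}$ into the telescoped descent inequality, dividing by $\sum_{t<T}\gamma_t=\Theta(\gamma_0T^{1/3})$, and using $\sum_t\eta_t^{3/2}=\Theta(\log T)$ (the source of the logarithmic factor) and $\sum_t\eta_t^2=\cO(1)$ produces the claimed rate with constants $V_0/\gamma_0$, $D_\sigma(1/\sqrt n+1/\alpha^2)$, $\gamma_0 D_1$ and $\gamma_0^2 D_2$ (bounding $1/\alpha$ by $1/\alpha^2$ for a uniform presentation).

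The main obstacle I expect is the joint bookkeeping in the third step: the compression increment depends on the momentum error while the momentum error is itself driven by the normalized step, so the two recursions must be solved together and the weights $\gamma_t,\eta_t$ chosen so that each noise source is amplified by exactly the contraction rate it is later divided by. The most delicate point specific to \algname{$\|\text{EF21-HM}\|$} is separating the \emph{deterministic} Hessian--Taylor drift (which, scaled by $\gamma_t^2$ and summed, produces the $\gamma_0^2\bar L_h$-type term) from the \emph{stochastic} Hessian--vector fluctuation (which produces the $\gamma_0\sigma_h/\sqrt n$-type term), while ensuring the $1/n$ variance reduction survives the client averaging in $\bar\epsilon^t$ but is correctly forfeited in the per-client compression bound. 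Here the normalization identity $\norm{x^{t+1}-x^t}=\gamma_t$ is what keeps every curvature term exactly of order $\gamma_t^2$ and hence tractable; verifying that the Hessian-correction variance enters only through the global-referenced $\sigma_h$ of \Cref{assum:stoc_g_h} (so that no per-component gradient-Lipschitz constant is needed) is the part requiring the most care.
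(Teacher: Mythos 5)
Your proposal is correct, but it reaches the theorem through noticeably different machinery than the paper. The paper works throughout with \emph{first moments}: it tracks $\cV_t=\frac1n\sum_i\norm{g_i^t-v_i^t}$ and $\cU_t=\frac1n\sum_i\norm{v_i^t-\nabla f_i(x^t)}$ via recursions with contraction factor $\sqrt{1-\alpha}$ (no Young's inequality), obtains the $1/\sqrt{n}$ gain on the averaged momentum error by \emph{explicitly unrolling} the recursion (its Lemma~\ref{lemma:summing_the_recursion}) and exploiting martingale orthogonality of the noise increments before applying Jensen, and finally couples everything through a time-varying Lyapunov function $V_t=\Delta_t+C_{1,t}\cV_t+C_{2,t}\cU_t$ whose coefficients are chosen so the error terms telescope. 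You instead run \emph{second-moment} recursions in the classical EF21/STORM style: a conditional-variance recursion for $\Exp{\sqnorm{\bar\epsilon^t}}$ (where client averaging of independent zero-mean noise gives the $1/n$ directly), a Young-tuned $(1-\alpha/2,\,2/\alpha)$ contraction for the squared compression error, a separate per-client copy to close the coupling, and then you solve each recursion in closed form with the technical lemma and plug square roots (via Jensen) into the telescoped descent inequality. Both routes deliver the same rates and logarithmic factors; yours yields a slightly tighter $1/\alpha$ (versus the paper's $1/\alpha^2$) on the compression contribution, which you correctly note can be relaxed to match the stated bound, while the paper's first-moment bookkeeping avoids ever squaring the quantities that actually appear in the normalized descent lemma.

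One point deserves care. You bound the stochastic Hessian--vector product by $(L^2+\sigma_h^2)\gamma_t^2$ using only global smoothness ($\norm{\nabla^2 f}\le L$ from \Cref{assum:L_fcn}) together with the Hessian-variance bound of \Cref{assum:stoc_g_h}. This is legitimate under the assumption \emph{as literally stated} (the Hessian noise is referenced to $\nabla^2 f(x)$), and it makes your proof consistent with exactly the assumptions listed in the theorem; your resulting constant $(L+\sigma_h)/\alpha^2$ replaces the paper's $\bar L/\alpha^2$ and is absorbed by the claimed $\widetilde{\cO}$ bound since the $\gamma_0\sigma_h/\alpha^2$ term is already present in $D_\sigma/\alpha^2$. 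Be aware, however, that the paper's own proof bounds $\norm{\nabla^2 f_i(x^{t+1})}_{\rm op}\le L_i$ via per-component smoothness (\Cref{assum:L_comp_fcn}) at this step, which is why $\bar L$ appears in $D_1$; if the Hessian-variance assumption were instead referenced to $\nabla^2 f_i(x)$, your shortcut would no longer apply and you would need \Cref{assum:L_comp_fcn} as the paper does.
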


From~\Cref{thm:HM}, \algname{$\|\text{EF21-HM}\|$} achieves the same $\tilde{\cO}(1/T^{1/3})$ convergence in the gradient norm as \algname{$\|\text{EF21-RHM}\|$}. 
When $\mathcal{C}^t(\cdot) = I$ and $n=1$, our result closely matches the ${\cO}(1/T^{1/3})$ convergence for stochastic second-order momentum methods in the centralized setting  by~\citet{tran2022better}.

\paragraph{$\|\text{EF21-MVR}\|$.}
Finally, we consider \Cref{alg:EF_momentum} with the MVR momentum from STORM~\citep{cutkosky2019momentum}, resulting in \algname{$\|\text{EF21-MVR}\|$}. 
The next theorem presents its parameter-agnostic convergence guarantee.
\begin{theorem}\label{thm:MVR}
Consider Problem~(\ref{eqn:problem}), where Assumptions~\ref{assum:contract_comp},~\ref{assum:inf},
~\ref{assum:L_stoc_comp_fcn},
~\ref{assum:L_fcn}
and~\ref{assum:stoc_g_h} hold. 
Let tuning parameters satisfy 
\begin{eqnarray*}
\eta_t = \left( \frac{2}{t+2} \right)^{2/3} \quad \text{and} \quad \gamma_t = \gamma_0 \left( \frac{2}{t+2} \right)^{2/3} 
\end{eqnarray*}
with $\gamma_0 > 0$. 
Then, the iterates $\{x^t\}$ governed by \algname{EF21-MVR} 
satisfy
\begin{align*}
&\Exp{\norm{\nabla f(\tilde{x}^T)}}  \\
&=\widetilde{\cO}\left(\frac{\nicefrac{V_0}{\gamma_0} + \sigma_g\left(\nicefrac{1}{\sqrt{n}} +\nicefrac{1}{\alpha^2}\right) +\gamma_0E_1}{T^{\nicefrac{1}{3}}}\right),
\end{align*}   
where  $\tilde{x}^T$ is randomly chosen from $\{x^0,x^1,\ldots,x^{T-1}\}$ with probability $\nicefrac{\gamma_t}{\sum_{t=0}^{T-1}\gamma_t}$ for $t=0,1,\ldots,T-1$. Here, $E_1=L + {\bar L}_{\text{ms}} + \nicefrac{{\bar L}_{\text{ms}}}{\alpha^2}$ and $\bar L_{\text{ms}} = \frac{1}{n}\sum_{i=1}^n L_{\text{ms},i}$.
\end{theorem}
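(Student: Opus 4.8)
The plan is to track three coupled quantities along the trajectory: the suboptimality $f(x^t)-f^{\inf}$, the momentum estimation error $\varepsilon^t := \bar v^t - \nabla f(x^t)$ where $\bar v^t := \frac1n\sum_{i=1}^n v_i^t$, and the error-feedback error $\cG^t := \frac1n\sum_{i=1}^n \Exp{\sqnorm{g_i^t - v_i^t}}$. I would begin from the normalized descent step. Since $x^{t+1}=x^t-\gamma_t g^t/\norm{g^t}$ and $f$ is $L$-smooth (Assumption~\ref{assum:L_fcn}), the elementary bound $\inp{a}{b/\norm{b}}\ge\norm{a}-2\norm{a-b}$ applied with $a=\nabla f(x^t)$, $b=g^t$ gives
\[ f(x^{t+1}) \le f(x^t) - \gamma_t\norm{\nabla f(x^t)} + 2\gamma_t\norm{\nabla f(x^t)-g^t} + \tfrac{L}{2}\gamma_t^2 . \]
Writing $\norm{\nabla f(x^t)-g^t}\le\norm{\varepsilon^t}+\norm{\bar v^t-g^t}$ and using $\norm{\bar v^t-g^t}^2\le\cG^t$ (Jensen over clients) reduces the theorem to bounding the two error sequences.

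The second step is the STORM-type momentum recursion. The enabling structural fact is that $x^{t+1}$ is determined before the fresh sample $\xi^{t+1}$ is drawn, so both $\nabla f_i(x^{t+1};\xi_i^{t+1})$ and $\nabla f_i(x^t;\xi_i^{t+1})$ are conditionally unbiased. Subtracting $\nabla f(x^{t+1})$ from the averaged MVR update collapses to $\varepsilon^{t+1}=(1-\eta_t)\varepsilon^t + S_1 - (1-\eta_t)S_2$, where $S_1:=\frac1n\sum_i(\nabla f_i(x^{t+1};\xi_i^{t+1})-\nabla f_i(x^{t+1}))$ and $S_1-S_2$ are both conditionally mean-zero. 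Independence across clients and Assumption~\ref{assum:stoc_g_h} give $\Exp{\sqnorm{S_1}}\le\sigma_g^2/n$, while Assumption~\ref{assum:L_stoc_comp_fcn} together with $\norm{x^{t+1}-x^t}=\gamma_t$ gives $\Exp{\sqnorm{S_1-S_2}}\le\overline{L_{\mathrm{ms}}^2}\gamma_t^2/n$; this is the variance-reduction effect. Taking conditional expectations kills the cross term and yields $\Exp{\sqnorm{\varepsilon^{t+1}}}\le(1-\eta_t)\Exp{\sqnorm{\varepsilon^t}}+\cO(\eta_t^2\sigma_g^2/n+\gamma_t^2\overline{L_{\mathrm{ms}}^2}/n)$.

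The third step is the error-feedback recursion. From Assumption~\ref{assum:contract_comp} and Young's inequality with parameter $\alpha/2$, one obtains $\cG^{t+1}\le(1-\alpha/2)\cG^t+\tfrac{2}{\alpha}\frac1n\sum_i\Exp{\sqnorm{v_i^{t+1}-v_i^t}}$. Since $v_i^{t+1}-v_i^t=[\nabla f_i(x^{t+1};\xi_i^{t+1})-\nabla f_i(x^t;\xi_i^{t+1})]+\eta_t[\nabla f_i(x^t;\xi_i^{t+1})-v_i^t]$, the increment is bounded in expectation by $\cO(\gamma_t^2 L_{\mathrm{ms},i}^2+\eta_t^2\sigma_g^2)$, after additionally controlling the per-client momentum error $\Exp{\sqnorm{v_i^t-\nabla f_i(x^t)}}=\cO(\eta_t(\sigma_g^2+\gamma_0^2 L_{\mathrm{ms},i}^2))$ through an analogous, non-variance-reduced recursion. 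Solving the $\cG$-recursion with its constant contraction factor then gives $\cG^t=\cO(\alpha^{-2}(\gamma_t^2\overline{L_{\mathrm{ms}}^2}+\eta_t^2\sigma_g^2))$.

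Finally I would combine. Summing the descent inequality, taking expectations and dividing by $\sum_t\gamma_t$ produces
\[ \Exp{\norm{\nabla f(\tilde x^T)}} \le \frac{f(x^0)-f^{\inf} + 2\sum_t\gamma_t\big(\Exp{\norm{\varepsilon^t}}+\Exp{\sqrt{\cG^t}}\big) + \tfrac{L}{2}\sum_t\gamma_t^2}{\sum_t\gamma_t}. \]
With $\eta_t=(2/(t+2))^{2/3}$ and $\gamma_t=\gamma_0\eta_t$, the elementary fact that $a_{t+1}\le(1-c_t)a_t+b_t$ forces $a_t\lesssim b_t/c_t$ for slowly varying $b_t$ gives $\Exp{\norm{\varepsilon^t}}\le\sqrt{\Exp{\sqnorm{\varepsilon^t}}}=\cO(\sqrt{\eta_t}(\sigma_g+\gamma_0\bar L_{\mathrm{ms}})/\sqrt n)$ and $\Exp{\sqrt{\cG^t}}\le\sqrt{\Exp{\cG^t}}=\cO(\alpha^{-1}(\gamma_t\bar L_{\mathrm{ms}}+\eta_t\sigma_g))$. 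The rate then follows from the schedule sums $\sum_t\gamma_t=\Theta(\gamma_0 T^{1/3})$, $\sum_t\gamma_t^2=\Theta(\gamma_0^2)$, $\sum_t\gamma_t\eta_t=\Theta(\gamma_0)$, and the logarithmic sum $\sum_t\gamma_t\sqrt{\eta_t}=\gamma_0\sum_t\eta_t^{3/2}=\Theta(\gamma_0\log T)$, which is exactly where the $\widetilde\cO$ enters; collecting terms reproduces $V_0/\gamma_0$, $\sigma_g(1/\sqrt n+1/\alpha^2)$, and $\gamma_0 E_1$ over $T^{1/3}$. The hard part will be the coupling between the two error recursions: the compression recursion is driven by $\norm{v_i^{t+1}-v_i^t}$, which forces me to control the per-client momentum error, and that term, unlike $\varepsilon^t$, enjoys no $1/n$ variance reduction. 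Keeping the decreasing schedules mutually consistent so that each recursion stabilizes at the order $b_t/c_t$, and so that the resulting powers of $t$ combine to exactly $T^{1/3}$ with a single $\log T$ factor, is the delicate bookkeeping on which the near-optimality of the bound hinges.
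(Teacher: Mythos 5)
Your proposal is correct and reaches the stated bound; its skeleton (normalized-descent inequality, three tracked error quantities — averaged momentum error with $1/\sqrt{n}$ reduction, per-client momentum error without it, and compression error — plus the same stepsize sums producing $T^{1/3}$ and the single $\log T$) coincides with the paper's. The execution, however, differs in two genuine ways. First, the paper works with \emph{first-power} norms throughout: it defines $\cV_t=\frac1n\sum_i\norm{g_i^t-v_i^t}$ and $\cU_t=\frac1n\sum_i\norm{v_i^t-\nabla f_i(x^t)}$, gets a $\sqrt{1-\alpha}$ contraction from the compressor via Jensen, and handles the coupling through a time-varying Lyapunov function $V_t=\Delta_t+C_{1,t}\cV_t+C_{2,t}\cU_t$ with $C_{1,t}=\frac{2\gamma_t}{1-\sqrt{1-\alpha}}$, $C_{2,t}=\frac{2\gamma_t\sqrt{1-\alpha}}{1-\sqrt{1-\alpha}}$, so that the $\cV,\cU$ recursions telescope without ever being solved to a stationary envelope; only the averaged momentum error is solved explicitly (Lemma~\ref{lemma:summing_the_recursion} plus orthogonality, then Lemma~\ref{lemma:sum_prod}). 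You instead work with \emph{squared} norms, use Young's inequality to get the $(1-\alpha/2)$-contraction with $2/\alpha$ amplification (the original EF21-style recursion), exploit conditional mean-zero cross-term cancellation in place of the explicit-sum computation, solve all three recursions to their $b_t/c_t$ envelopes, and substitute directly into the telescoped descent inequality with no Lyapunov function. What each buys: your route is more transparent about where variance reduction acts (the martingale structure kills cross terms exactly) and actually yields the sharper $1/\alpha$ dependence where the theorem states $1/\alpha^2$ (the paper loosens $\frac{\sqrt{1-\alpha}}{1-\sqrt{1-\alpha}}\le\frac{1}{\alpha^2}$ at the end); the paper's route keeps everything in first moments and lets the Lyapunov coefficients do the bookkeeping, so the compression and per-client recursions never need envelope solutions.

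One caveat: the ``elementary fact'' that $a_{t+1}\le(1-c_t)a_t+b_t$ with $c_t=\eta_t\to 0$ forces $a_t\lesssim b_t/c_t$ is exactly the paper's Lemma~\ref{lemma:sum_prod} (Lemma 15 of \citet{fatkhullin2023stochastic}); it is not free for time-varying contraction factors and needs both the polynomial form of the schedules and the observation that $\eta_0=1$ annihilates the initial condition (the paper invokes this explicitly). You should cite or prove that lemma and note the $\eta_0=1$ point; with that, your argument is complete.
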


From~\Cref{thm:MVR}, \algname{$\|\text{EF21-MVR}\|$} achieves the same $\tilde{\cO}(1/T^{1/3})$ convergence in the gradient norm as \algname{$\|\text{EF21-RHM}\|$} and \algname{$\|\text{EF21-HM}\|$}. 
Unlike \algname{$\|\text{EF21-RHM}\|$} and \algname{$\|\text{EF21-HM}\|$}, which require Hessian-vector evaluations per iteration, 
\algname{$\|\text{EF21-MVR}\|$} needs two stochastic gradient computations. 
Our result nearly matches the ${\cO}(1/T^{1/3})$ convergence for \algname{EF21-STORM/MVR} by~\citet{fatkhullin2023momentum}, but with notable improvements. 
\algname{$\|\text{EF21-MVR}\|$} does not require a large mini-batch size at initialization, and operates with parameter-agnostic stepsizes without the need to know problem-specific quantities like smoothness constants. 
Furthermore, when $\mathcal{C}^t(\cdot) = I$ and $n=1$, our result closely matches the ${\cO}(1/T^{1/3})$ convergence for STORM in the centralized setting  by~\citet{cutkosky2019momentum}, but without assuming additional restrictive conditions like bounded norms of stochastic gradients.




\begin{table*}[htbp]
\centering
\label{main_table:accuracy_results}
\begin{threeparttable}
\resizebox{0.65\textwidth}{!}{%
\begin{tabular}{@{}lccccc@{}}
\toprule
\textbf{Method} & \textbf{\makecell{Best Val.\\Accuracy (\%)}} & \textbf{\makecell{Corr. Test\\Accuracy (\%)}} & \textbf{\makecell{Epoch of\\Best}} & \textbf{\makecell{GPU Time\\to Best}} & \textbf{\makecell{Wall Time\\to Best}} \\ \midrule
\algname{EF21-SGDM}          & $74.66$          & $73.53$          & $76$ & 0h 35m 59s & 0h 35m 59s \\
\algname{EControl}           & $76.38$          & $74.49$          & $79$ & 0h 38m 06s & 0h 38m 06s \\
\algname{EF21-SGD}           & $77.50$          & $75.90$          & $69$ & 0h 30m 35s & 0h 30m 36s \\
\midrule
\algname{$\norm{\text{EF21-MVR}}$}  & $79.30$          & $78.77$          & $72$ & 0h 44m 33s & 0h 44m 33s \\
\algname{$\|\text{EF21-RHM}\|$} & $81.48$          & $80.54$          & $66$ & 1h 35m 10s & 1h 35m 10s \\
\algname{$\norm{\text{EF21-SGDM}}$} & $82.66$          & $81.70$          & $79$ & 0h 37m 54s & 0h 37m 55s \\
\textbf{\algname{$\norm{\text{EF21-IGT}}$}}  & $\mathbf{83.48}$          & $\mathbf{81.77}$          & $\mathbf{70}$ & \textbf{0h 39m 03s} & \textbf{0h 39m 03s} \\
\textbf{\algname{$\|\text{EF21-HM}\|$}}  & $\mathbf{84.32}$ & $\mathbf{83.22}$ & $\mathbf{75}$ & \textbf{1h 37m 55s} & \textbf{1h 37m 56s} \\ \bottomrule
\end{tabular}}
\caption{Best performance metrics achieved by each method when training ResNet-18 on CIFAR-10, sorted by validation accuracy. The top two results are highlighted in \textbf{bold}.}

\end{threeparttable}
\end{table*}

\begin{figure*}[t]
    \centering
    \begin{subfigure}{0.32\textwidth}
        \includegraphics[width=\linewidth]{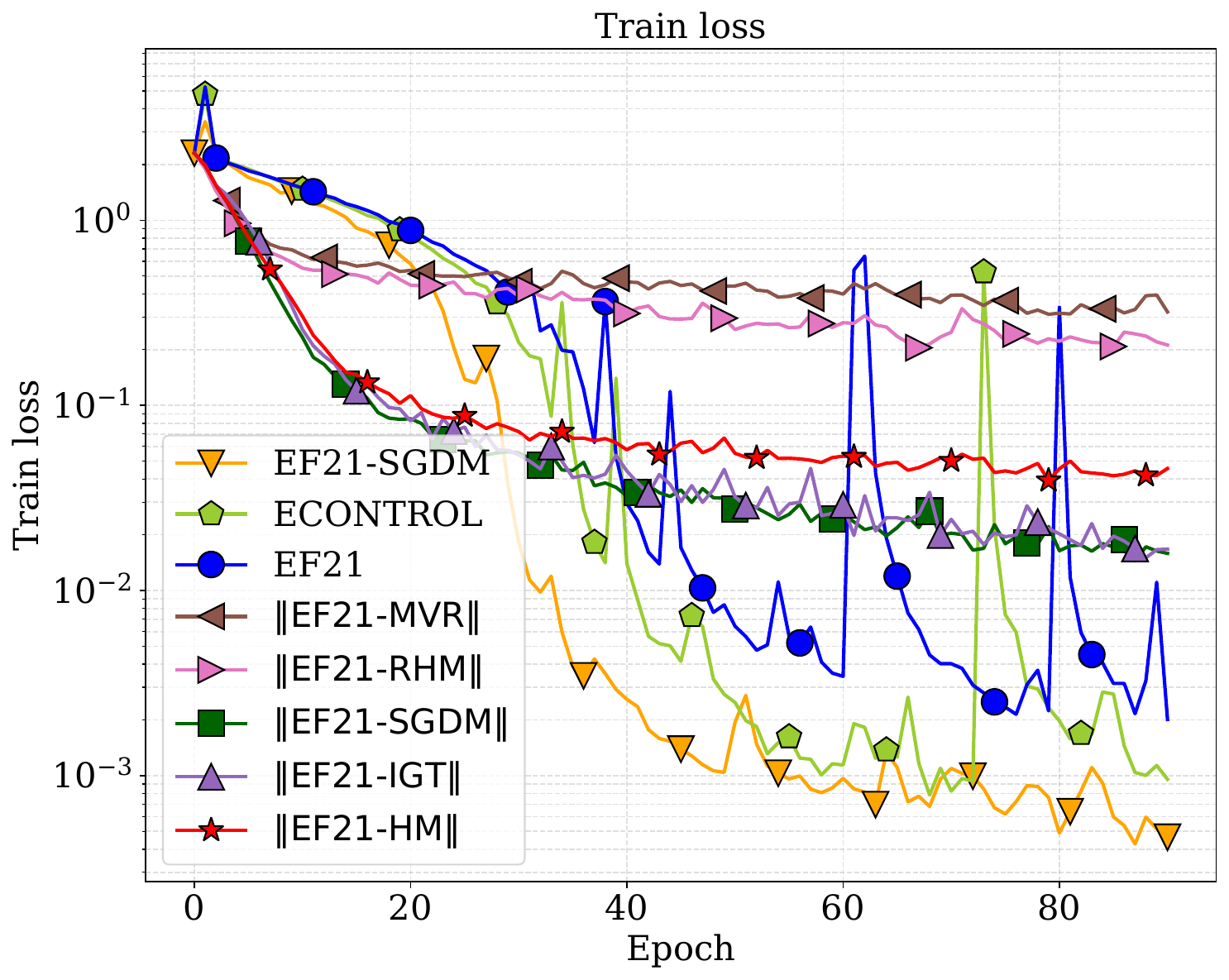}
        \caption{Training Loss}
        \label{main_fig:train_loss_epoch}
    \end{subfigure}
    \hfill 
    \begin{subfigure}{0.32\textwidth}
        \includegraphics[width=\linewidth]{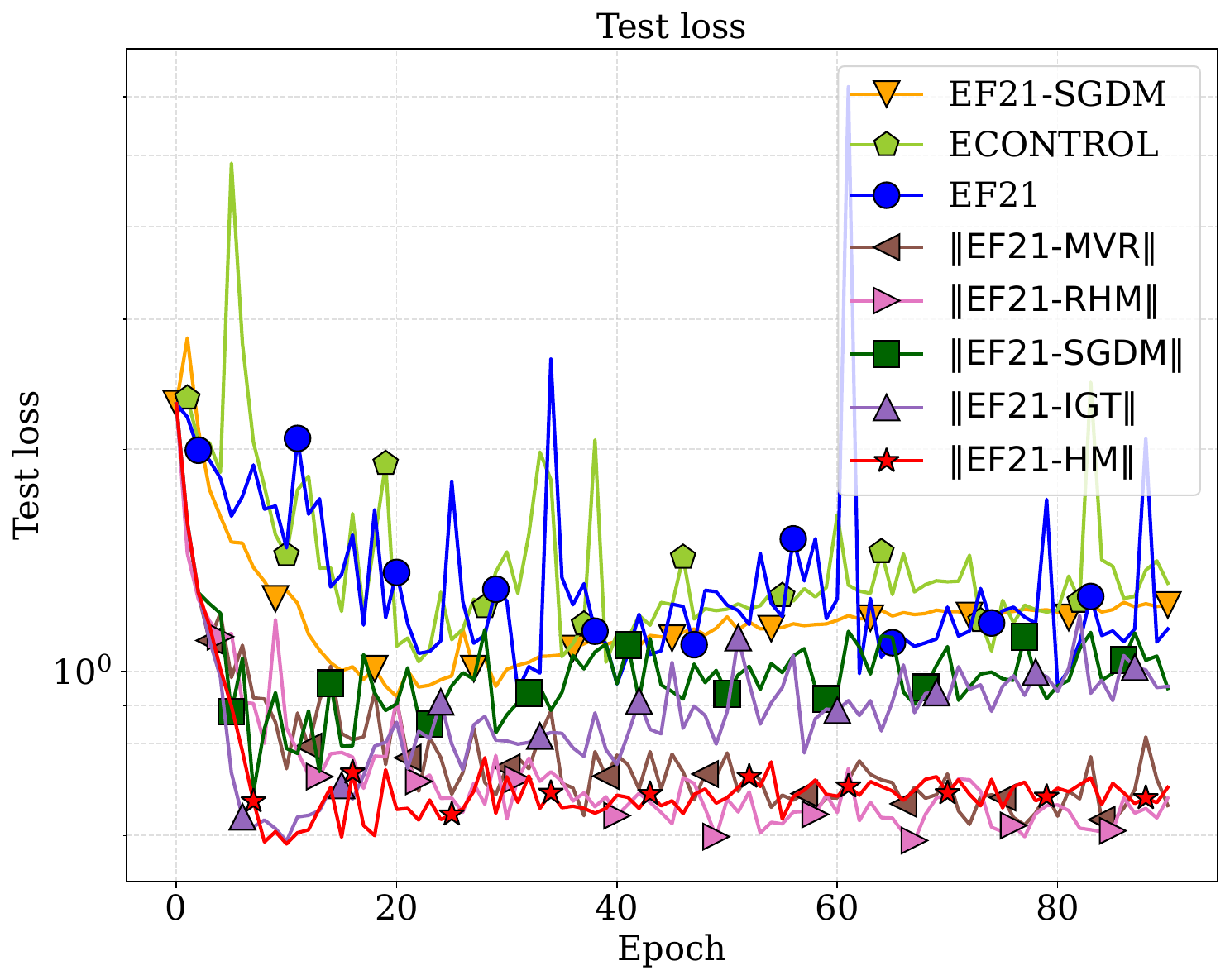}
        \caption{Test Loss}
        \label{main_fig:test_loss_epoch}
    \end{subfigure}
    \hfill 
    \begin{subfigure}{0.32\textwidth}
        \includegraphics[width=\linewidth]{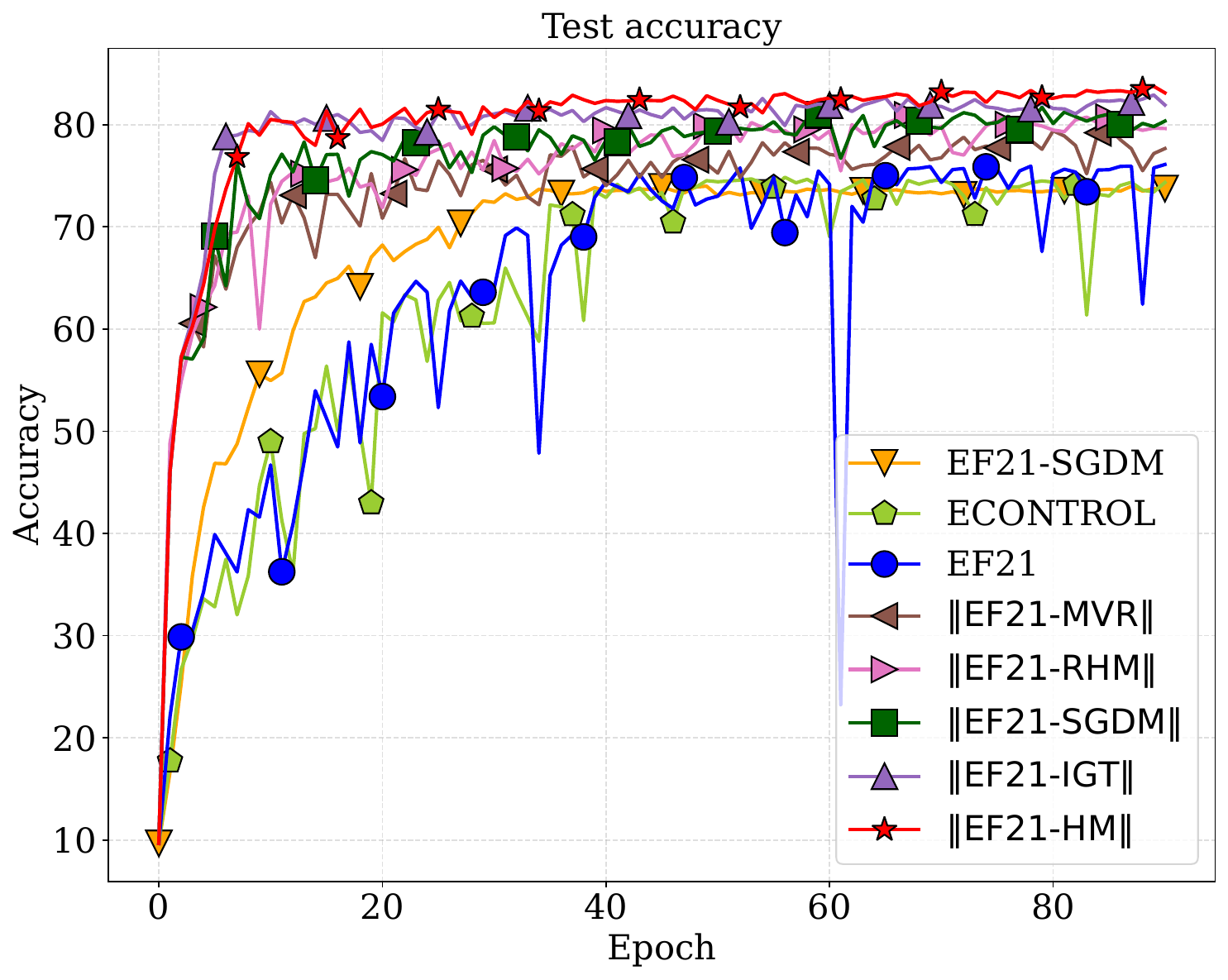}
        \caption{Test Accuracy}
        \label{main_fig:test_acc_epoch}
    \end{subfigure}
    \caption{Performance comparison of all methods on CIFAR-10 with ResNet-18, plotted as a function of epochs. The proposed momentum variants, particularly \algname{$\norm{\text{EF21-HM}}$} and \algname{$\norm{\text{EF21-IGT}}$}, show superior sample efficiency.}
    \label{main_fig:performance_plots_epoch}
\end{figure*}


\section{Numerical Experiments}\label{sec:exp}
We evaluated the performance of \algname{EF21} methods \footnote{To support reproducibility, we provide the full experimental code in our \href{https://github.com/IgorSokoloff/Parameter-Agnostic-Error-Feedback-through-Momentum}{GitHub repository}.}  with momentum variants (\Cref{alg:EF_momentum}) for training the ResNet-18 model~\citep{he2016deep} (with $d = 11,173,962$ parameters) on the CIFAR-10 dataset~\citep{krizhevsky2009learning}.
In particular, we benchmark~\Cref{alg:EF_momentum} with five momentum variants against existing distributed error feedback methods: \algname{EF21-SGD} \citep{fatkhullin2025ef21}, \algname{EF21-SGDM} \citep{fatkhullin2023momentum}, and \algname{EControl} \citep{gao2024econtrol}.

{\color{black}In these experiments,} we adopted a data distribution strategy inspired by \citet{gao2024econtrol}. Specifically, 50\% of the CIFAR-10 dataset was allocated to 10 clients based on class labels, such that data points with the $i$-th label (for $i \in \{0, \dots, 9\}$) were assigned to client $i+1$. The remaining 50\% of the dataset was distributed randomly and uniformly among the clients. Subsequently, each client’s local data was partitioned into a training set (90\%) and a test set (10\%). This partitioning scheme introduces data heterogeneity, a common characteristic of federated settings. For communication compression, we employed the Top-K sparsifier, retaining 10\% of the coordinates (i.e., $\nicefrac{K}{d} = 0.1$).
All experiments were implemented using PyTorch~\citep{paszke2019pytorch}, and were performed on a server-grade machine running Ubuntu 18.04 (Linux Kernel v5.4.0). This system was equipped with dual 16-core 3.3 GHz Intel Xeon processors (totaling 32 cores) and four NVIDIA A100 GPUs, each with 40GB of memory.
Finally, we reported the convergence with respect to the number of epochs, the GPU time, and the wall clock time. 
Details on the computation of GPU and wall-clock time, including the hyperparameter tuning procedures for each method, are provided in the appendix.



\textbf{Implementation details.}
Implementing \algname{$\|\text{EF21-SGDM}\|$}, \algname{$\|\text{EF21-IGT}\|$} and \algname{$\|\text{EF21-MVR}\|$} is straightforward. 
\algname{$\|\text{EF21-IGT}\|$} requires one additional vector to memorize the extrapolated point, while \algname{$\|\text{EF21-MVR}\|$}  requires two stochastic gradient evaluations on the same minibatch. 
For \algname{$\|\text{EF21-HM}\|$} and \algname{$\|\text{EF21-RHM}\|$}, we can compute Hessian-vector products (HVPs) efficiently.
This is achieved via the well-known identity
\begin{align*}
    \nabla^2 f(x;\xi)\,v \;=\; \nabla_x\big\langle \nabla_x f(x;\xi),\, v \big\rangle,
\end{align*}
which allows for the computation of the HVP without materializing the full Hessian matrix, which is computationally expensive.

\textbf{$\bullet$}
\algname{$\|\text{EF21-HM}\|$}:
At each iteration $t$ and worker $i$, we form the displacement vector $\Delta^{t+1} = x^{t+1} - x^t$. The stochastic gradient $\nabla f_i(x^{t+1};\xi_i^{t+1})$ and the HVP $\quad\text{and}\quad h_v^{t+1} \;\gets\; \nabla^2 f_i(x^{t+1};\xi_i^{t+1})\,\Delta^{t+1}$ are then computed at the new point $x^{t+1}$ using the same minibatch $\xi_i^{t+1}$.
The momentum buffer is subsequently updated using this Hessian correction:
\begin{align*}
    v_i^{t+1} \;=\; (1-\eta_t)\bigl(v_i^t + h_v^{t+1}\bigr) \;+\; \eta_t\, \nabla f_i(x^{t+1};\xi_i^{t+1}).
\end{align*}
This approach has a computational cost equivalent to two backpropagations per minibatch.

\textbf{$\bullet$}
\algname{$|\text{EF21-RHM}|$}: In this randomized variant, the HVP is evaluated at an interpolated point,
$$\hat{x}^{t+1} = q_t\,x^{t+1} + (1-q_t)\,x^t,
    \qquad q_t \sim \mathcal{U}(0,1).$$
At each iteration $t$, and for each worker $i$, we define the displacement vector $\Delta^{t+1} = x^{t+1} - x^t$. The stochastic gradient $\nabla f_i(\hat{x}^{t+1};\hat{\xi}_i^{t+1})$ and the HVP
$$
    h_v^{t+1} \;\gets\; \nabla^2 f_i(\hat{x}^{t+1};\hat{\xi}_i^{t+1})\,\Delta^{t+1}
$$
are then computed at the new point $x^{t+1}$ using an independent minibatch $\xi_i^{t+1}$. The overall computational overhead corresponds to three backpropagations per minibatch.

\paragraph{Epoch-based  performance.}
We observe that normalization significantly improves the per-epoch convergence performance of  distributed error feedback algorithms, in both validation and test accuracy. 
Among all methods, \algname{$\norm{\text{EF21-HM}}$}  achieves the  highest accuracy, outperforming other momentum variants. 
Meanwhile, \algname{$\norm{\text{EF21-IGT}}$}, \algname{$\norm{\text{EF21-RHM}}$}, and \algname{$\norm{\text{EF21-SGDM}}$} also perform competitively, each surpassing the $80\%$ accuracy threshold.


\paragraph{Wall-Clock Time Performance.}
Next, we evaluate the convergence performance with respect to wall-clock time.
\algname{$\norm{\text{EF21-IGT}}$} achieves the best overall performance in terms of reaching the highest accuracy within the shortest time, despite all distributed error feedback algorithms being run under the same timing conditions.
While \algname{$\norm{\text{EF21-HM}}$} achieves the strongest per-epoch accuracy, it requires roughly twice the wall-clock time to reach its peak performance.
This increased cost is due to its use of Hessian-vector products, which require the equivalent of two backpropagations per minibatch—compared to a single backpropagation used by  \algname{$\norm{\text{EF21-IGT}}$} and \algname{$\norm{\text{EF21-SGDM}}$}.




\section{Conclusion}

In this paper, we have proposed a class of \algname{EF21} algorithms that combine parameter-agnostic stepsize rules with normalization and momentum variants. We show that these methods achieve the near-optimal convergence guarantees of vanilla \algname{EF21} algorithms, which rely on problem-dependent stepsizes. Our theoretical results match existing convergence bounds for momentum-based \algname{EF21} algorithms and recover known guarantees for normalized stochastic gradient methods with momentum. Finally, our experiments on deep neural network training tasks confirm that novel momentum variants—such as IGT momentum and Hessian-corrected momentum—can further enhance the convergence of \algname{EF21} algorithms.

\paragraph{Future work.}
This paper focuses on designing distributed \algname{EF21} algorithms using momentum and normalization for nonconvex problems. 
Therefore, it is natural to consider extensions to (star-)convex optimization problems. 
Moreover, building on the recent work  by~\citet{oikonomou2024stochastic}, which integrates adaptive stepsizes such as Polyak and AdaGrad into centralized stochastic momentum methods, one promising direction is to incorporate such adaptive stepsizes into our algorithms.
This could further  improve their applicability to deep neural network training tasks. 

\subsubsection*{Acknowledgements}
The research reported in this publication was supported by funding from King Abdullah University of Science and
Technology (KAUST): i) KAUST Baseline Research Scheme, ii) CRG Grant ORFS-CRG12-2024-6460, and iii) Center
of Excellence for Generative AI, under award number 5940.

\bibliography{aistats2026_refs}
\clearpage
\appendix
\thispagestyle{empty}

\onecolumn

\aistatstitle{Supplementary Materials}

\tableofcontents

\section{Key Inequalities}

In this section, we introduce basic lemmas for facilitating our convergence analysis. 


The first lemma, similar to \citet[Lemma 8]{nguyen2018sgd}, establishes an explicit expression for the iterates $\{e_t\}$ defined by the specific recursion. 

\begin{lemma}\label{lemma:summing_the_recursion}
Let the sequence $\{e_t\}$ be governed by 
$e_{t+1} = (1-\eta_t) e_t + (1-\eta_t) A_{t+1} + \eta_t B_{t+1}$ with $A_t,B_t \geq 0$. Then, $e_{t+1} = \prod_{\tau=0}^t (1-\eta_\tau) {e}_0 + \sum_{j=0}^t \left(\prod_{\tau=j+1}^t (1-\eta_\tau)\right) (1-\eta_j) {A}_{j+1} + \sum_{j=0}^t \left(\prod_{\tau=j+1}^t (1-\eta_\tau)\right) \eta_j B_{j+1}$.    
\end{lemma}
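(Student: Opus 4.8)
The plan is to prove the identity by induction on $t$, since the recursion $e_{t+1} = (1-\eta_t) e_t + (1-\eta_t) A_{t+1} + \eta_t B_{t+1}$ is a first-order linear inhomogeneous recursion in $e_t$ with time-varying coefficient $(1-\eta_t)$, and the asserted closed form is exactly its unrolled solution. For the base case $t=0$, I would substitute $t=0$ into both sides: the recursion gives $e_1 = (1-\eta_0)e_0 + (1-\eta_0)A_1 + \eta_0 B_1$, while the right-hand side of the claimed formula, using the convention that an empty product such as $\prod_{\tau=1}^{0}(1-\eta_\tau)$ equals $1$, reduces to precisely these same three terms.

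For the inductive step, I would assume the formula holds for $e_t$ and substitute it into the recursion $e_{t+1} = (1-\eta_t)e_t + (1-\eta_t)A_{t+1} + \eta_t B_{t+1}$. Multiplying the inductive hypothesis by $(1-\eta_t)$ promotes each inner factor $\prod_{\tau=j+1}^{t-1}(1-\eta_\tau)$ to $\prod_{\tau=j+1}^{t}(1-\eta_\tau)$, and likewise turns the homogeneous term $\prod_{\tau=0}^{t-1}(1-\eta_\tau)\,e_0$ into $\prod_{\tau=0}^{t}(1-\eta_\tau)\,e_0$. It then remains to absorb the two freshly added terms $(1-\eta_t)A_{t+1}$ and $\eta_t B_{t+1}$ as the $j=t$ summands of the $A$- and $B$-sums, respectively. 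This works because the product $\prod_{\tau=t+1}^{t}(1-\eta_\tau)$ is empty and equals $1$, so these two terms match exactly the shape $\bigl(\prod_{\tau=t+1}^{t}(1-\eta_\tau)\bigr)(1-\eta_t)A_{t+1}$ and $\bigl(\prod_{\tau=t+1}^{t}(1-\eta_\tau)\bigr)\eta_t B_{t+1}$. Extending the summation ranges to $j=0,\ldots,t$ then reproduces the claimed expression for $e_{t+1}$.

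The only real care needed is the bookkeeping of the empty-product conventions at the boundary indices, together with checking that the upper limit of every inner product shifts consistently from $t-1$ to $t$ when the whole inductive expression is scaled by $(1-\eta_t)$; this is the step where a sign or index slip is most likely, so I would write out the promotion of the products explicitly. I would note that the nonnegativity assumption $A_t, B_t \geq 0$ plays no role in the identity itself—it is presumably invoked later when this closed form is bounded termwise—so I would not use it in establishing the equality.
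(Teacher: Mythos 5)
Your induction proof is correct, and it is essentially the same argument the paper relies on: the paper states this lemma without a written proof (pointing to Lemma~8 of \citet{nguyen2018sgd}), and the claimed closed form is exactly the unrolled recursion, which your base case and inductive step (with the empty-product convention $\prod_{\tau=t+1}^{t}(1-\eta_\tau)=1$) formalize correctly. Your side remark is also accurate: the nonnegativity of $A_t, B_t$ plays no role in the identity itself and is only relevant to the later termwise norm bounds where the lemma is applied.
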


The next lemma provides the descent inequality for the normalized gradient descent iteration $x^{t+1}=x^t-\gamma_t \nicefrac{g^t}{\norm{g^t}}$.
\begin{lemma}[Descent Lemma]
    \label{lem:descent_lemma}
    Let~\Cref{assum:L_fcn} hold. Then for  the iterates $\{x^t\}$ generated by the following gradient update
    \begin{equation}
    \label{eq:grad_update}
        x^{t+1} = x^t - \gamma_t \frac{g^t}{\|g^t\|}
    \end{equation}
    with $g^t = \frac{1}{n}\sum_{i=1}^n g_i^t$ and $v^t = \frac{1}{n}\sum_{i=1}^n v_i^t$ 
    satisfy 
    \begin{equation}
    \label{eq:descent_lemma}
        \Delta_{t+1} + \gamma_t \|\nabla f(x^t)\| \le \Delta_t  + 2 \gamma_t \norm{v^t- \nabla f(x^t)} +\frac{ 2\gamma_t}{n}\sum_{i=1}^n\norm{g_i^t-v_i^t} + \frac{\gamma_t^2L}{2} ,
    \end{equation}
    where $\Delta_t \eqdef f(x^t) - f^{\inf}$ for any $t \geq 0$. 
\end{lemma}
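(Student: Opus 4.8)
The plan is to begin from the standard descent inequality furnished by \Cref{assum:L_fcn}. Applying $L$-smoothness to the consecutive iterates $x^t$ and $x^{t+1}$ gives
\begin{equation*}
f(x^{t+1}) \le f(x^t) + \inp{\nabla f(x^t)}{x^{t+1}-x^t} + \frac{L}{2}\norm{x^{t+1}-x^t}^2 .
\end{equation*}
The crucial feature of the normalized update \eqref{eq:grad_update} is that $\norm{x^{t+1}-x^t} = \gamma_t$ exactly, so the quadratic term collapses to $\tfrac{\gamma_t^2 L}{2}$ with no need to control $\norm{g^t}$. Substituting $x^{t+1}-x^t = -\gamma_t\, g^t/\norm{g^t}$ leaves the single inner product $-\gamma_t\inp{\nabla f(x^t)}{g^t/\norm{g^t}}$ to handle.

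The core of the argument is the \emph{normalization lemma} that lower-bounds $\inp{\nabla f(x^t)}{g^t/\norm{g^t}}$ by $\norm{\nabla f(x^t)}$ up to an error term. I would decompose $\nabla f(x^t) = g^t + (\nabla f(x^t) - g^t)$, so the inner product equals $\norm{g^t} + \inp{\nabla f(x^t) - g^t}{g^t/\norm{g^t}}$. Cauchy–Schwarz bounds the second summand below by $-\norm{\nabla f(x^t) - g^t}$, while the triangle inequality gives $\norm{g^t} \ge \norm{\nabla f(x^t)} - \norm{g^t - \nabla f(x^t)}$. Combining these yields
\begin{equation*}
\inp{\nabla f(x^t)}{\frac{g^t}{\norm{g^t}}} \ge \norm{\nabla f(x^t)} - 2\norm{g^t - \nabla f(x^t)} ,
\end{equation*}
where the factor $2$ arises from invoking the triangle inequality twice.

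Substituting this lower bound into the descent inequality and subtracting $f^{\inf}$ from both sides produces the intermediate bound $\Delta_{t+1} + \gamma_t\norm{\nabla f(x^t)} \le \Delta_t + 2\gamma_t\norm{g^t - \nabla f(x^t)} + \tfrac{\gamma_t^2 L}{2}$. The final step splits the aggregate error through the intermediate quantity $v^t$: by the triangle inequality $\norm{g^t - \nabla f(x^t)} \le \norm{g^t - v^t} + \norm{v^t - \nabla f(x^t)}$, and since $g^t - v^t = \tfrac{1}{n}\sumin (g_i^t - v_i^t)$, convexity of the norm gives $\norm{g^t - v^t} \le \tfrac{1}{n}\sumin\norm{g_i^t - v_i^t}$. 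Assembling these recovers \eqref{eq:descent_lemma} exactly.

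This proof is elementary rather than obstacle-laden: every inequality is Cauchy–Schwarz, the triangle inequality, or $L$-smoothness. The one conceptual crux—and the reason normalization is what enables parameter-agnostic stepsizes downstream—is the normalization lemma of the second paragraph, which trades an \emph{exact} step length $\gamma_t$ (independent of $\norm{g^t}$, hence of problem constants) for a gradient-norm decrease plus an additive estimator error, rather than the usual $\gamma_t\norm{\nabla f(x^t)}^2$ term that would require knowledge of $L$ to tune.
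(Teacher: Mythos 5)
Your proposal is correct and follows essentially the same route as the paper's own proof: $L$-smoothness with the exact step length $\|x^{t+1}-x^t\|=\gamma_t$, the decomposition $\langle \nabla f(x^t), g^t/\|g^t\|\rangle = \|g^t\| + \langle \nabla f(x^t)-g^t, g^t/\|g^t\|\rangle$ handled by Cauchy--Schwarz and the triangle inequality (yielding the factor $2$), and finally the split $\|g^t-\nabla f(x^t)\| \le \|v^t-\nabla f(x^t)\| + \frac{1}{n}\sum_{i=1}^n\|g_i^t-v_i^t\|$. Your writeup is, if anything, more careful about signs than the paper's.
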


\begin{proof}
Applying $L$-smoothness of $f(x)$ (\Cref{assum:L_fcn}) and~\eqref{eq:grad_update}, we have 
\begin{eqnarray*}
f(x^{t+1}) &\le& f(x^t) + \langle \nabla f(x^t), x^{t+1}-x^t \rangle + \frac{L}{2} \|x^{t+1}-x^t\|^2  \\
&=& f(x^t) - \gamma_t \langle \nabla f(x^t), g^t/\|g^t\| \rangle + \frac{L}{2} \gamma_t^2  \\
&\le& f(x^t) - \gamma_t \|g^t\| + \gamma_t \langle \nabla f(x^t) - g^t, g^t/\|g^t\| \rangle + \frac{L}{2} \gamma_t^2  \\
&\overset{ (a) }{\le}& f(x^t) - \gamma_t \|g^t\| - \gamma_t \norm{ \nabla f(x^t) - g^t} + \frac{L}{2} \gamma_t^2 \\
&\overset{(b)}{\le}& f(x^t) - \gamma_t \|\nabla f(x^t)\| + 2\gamma_t \|\nabla f(x^t) - g^t\| + \frac{L}{2} \gamma_t^2,
\end{eqnarray*}
where in $(a)$ we used Cauchy-Schwartz inequality, in $(b)$ we used Triangle inequality. Finally, by Triangle inequality, we have 
\begin{eqnarray*}
    \norm{\nabla f(x^t) - g^t} 
    & \leq &  \norm{v^t- \nabla f(x^t)} + \norm{g^t-v^t} \\
    & \leq &\norm{v^t- \nabla f(x^t)} + \frac{1}{n}\sum_{i=1}^n\norm{g_i^t-v_i^t}, 
\end{eqnarray*}
 by denoting $\Delta_{t+1} := f(x^{t+1}) - f^{\inf}$ for $t \in \{0, \dots, T-1\}$, we complete the proof. 
\end{proof}

Next, the following lemma bounds $\frac{1}{n}\sum_{i=1}^n\norm{g_i^t-v_i^t}$
from the iteration $g_i^{t+1}=g_i^t  + \cC_i^{t+1}(v_i^{t+1}-g_i^t)$. 
\begin{lemma}\label{lemma:boundg_and_v}
Let $g_i^{t+1}=g_i^t + \cC_i^{t+1}(v_i^{t+1}-g_i^t)$ for $i=1,2,\ldots,n$, where $\cC_i^t(\cdot)$ satisfies~\Cref{assum:contract_comp}. Then,  
\begin{eqnarray*}
   \Exp{\cV_{t+1}}  \leq \sqrt{1-\alpha}    \Exp{\cV_{t}} + \frac{\sqrt{1-\alpha}}{n} \sum_{i=1}^n \Exp{\norm{v_i^{t+1}-v_i^t}},
\end{eqnarray*}
where $\cV_t = \frac{1}{n}\sum_{i=1}^n \norm{g_i^t-v_i^t}$.
\end{lemma}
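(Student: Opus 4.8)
The plan is to derive a one-step contraction for the per-client compression error $g_i^t - v_i^t$ and then average over the $n$ clients. First I would rewrite the error at iteration $t+1$ in a form to which \Cref{assum:contract_comp} applies directly. Substituting the memory update $g_i^{t+1} = g_i^t + \cC_i^{t+1}(v_i^{t+1}-g_i^t)$ gives
\[
g_i^{t+1} - v_i^{t+1} = \cC_i^{t+1}(v_i^{t+1} - g_i^t) - (v_i^{t+1} - g_i^t),
\]
which is precisely the compression residual $\cC_i^{t+1}(w) - w$ evaluated at $w = v_i^{t+1} - g_i^t$.

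Next I would apply the contractive property to this residual. Conditioning on the $\sigma$-algebra $\cF_{t+1}$ generated by everything up to the formation of $v_i^{t+1}$ — so that $w = v_i^{t+1} - g_i^t$ is fixed and only the compressor's internal randomness remains — \Cref{assum:contract_comp} yields
\[
\Exp{\sqnorm{g_i^{t+1} - v_i^{t+1}} \mid \cF_{t+1}} \le (1-\alpha)\sqnorm{v_i^{t+1} - g_i^t}.
\]
Because the lemma is stated in the plain norm rather than its square, I would then invoke Jensen's inequality (concavity of $\sqrt{\cdot}$) to pass to $\Exp{\norm{g_i^{t+1} - v_i^{t+1}} \mid \cF_{t+1}} \le \sqrt{1-\alpha}\,\norm{v_i^{t+1} - g_i^t}$. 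This transition from the squared-norm contraction the assumption provides to the first-moment bound the statement requires is the only real subtlety, and it is also where the conditioning must be arranged so that the compressor randomness is independent of $w$.

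Finally I would split $\norm{v_i^{t+1} - g_i^t} \le \norm{v_i^{t+1} - v_i^t} + \norm{v_i^t - g_i^t}$ by the triangle inequality, take total expectations, and average over $i = 1,\dots,n$. Recognizing $\tfrac{1}{n}\sum_{i=1}^n \Exp{\norm{v_i^t - g_i^t}} = \Exp{\cV_t}$ then recovers exactly the claimed recursion, with the momentum-increment term $\tfrac{\sqrt{1-\alpha}}{n}\sum_{i=1}^n \Exp{\norm{v_i^{t+1} - v_i^t}}$ appearing as the driving term. Notably, no smoothness or momentum-specific structure enters the argument: the bound holds for \emph{any} rule generating the $v_i^t$, which is what makes this lemma reusable across all five momentum variants.
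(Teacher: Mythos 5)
Your proposal is correct and follows essentially the same route as the paper's proof: rewrite $g_i^{t+1}-v_i^{t+1}$ as the compression residual via the update rule, condition on $\cF_{t+1}$, combine the contraction assumption with Jensen's inequality to pass from the squared-norm bound to the first-moment bound $\sqrt{1-\alpha}\,\norm{v_i^{t+1}-g_i^t}$, then split by the triangle inequality and average over clients. The only cosmetic difference is the order in which you apply Jensen and the contraction property, which is immaterial.
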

\begin{proof}
Define $\cV_t \eqdef \frac{1}{n}\sum_{i=1}^n \norm{g_i^t-v_i^t}$. Then, 
\begin{eqnarray*}
    \cV_{t+1} 
    & \overset{g_i^{t+1}}{=} & \frac{1}{n}\sum_{i=1}^n \norm{ v_i^{t+1} - g_i^t -  \mathcal{C}_i^{t+1}(v_i^{t+1} - g_i^t)}.
\end{eqnarray*}

Next, by taking the conditional expectation with fixed $\cF_{t+1} =\{v_i^{t+1},x^{t+1},g_i^t\}$, and by using Jensen's inequality, 
\begin{eqnarray*}
    \ExpCond{\cV_{t+1}}{\cF_{t+1}} 
      & \leq & \frac{1}{n}\sum_{i=1}^n\sqrt{ \ExpCond{\sqnorm{ v_i^{t+1} - g_i^t -  \mathcal{C}_i^{t+1}(v_i^{t+1} - g_i^t)}}{\cF_{t+1}}} \\
      & \overset{(a)}{\leq} & \frac{\sqrt{1-\alpha}}{n} \sum_{i=1}^n \sqrt{\ExpCond{\sqnorm{ v_i^{t+1} - g_i^t}}{\cF_{t+1}}} \\
      & = &  \frac{\sqrt{1-\alpha}}{n} \sum_{i=1}^n \norm{v_i^{t+1}-g_i^t} \\
      & \overset{(b)}{\leq} &   \frac{\sqrt{1-\alpha}}{n} \sum_{i=1}^n \norm{v_i^{t+1}-v_i^t} + \frac{\sqrt{1-\alpha}}{n} \sum_{i=1}^n \norm{v_i^t-g_i^t},
\end{eqnarray*}
where in $(a)$ we used in \Cref{assum:contract_comp}, in $(b)$ we used Triangle inequality. Therefore, 
\begin{eqnarray*}
   \Exp{\cV_{t+1}}  & = & \Exp{\ExpCond{\cV_{t+1}}{\cF_{t+1}}} \\
   & \leq &   \frac{\sqrt{1-\alpha}}{n} \sum_{i=1}^n \Exp{\norm{v_i^{t+1}-v_i^t}} + \frac{\sqrt{1-\alpha}}{n} \sum_{i=1}^n \Exp{\norm{v_i^t-g_i^t}}. 
\end{eqnarray*}

Finally, from the definition of $\cV_t$, we obtain the final result. 
\end{proof}

Finally, the following lemma presents an explicit upper bound on the summation involving the tuning parameters $\gamma_t,\eta_t$.
\begin{lemma}[Lemma 15 of \citet{fatkhullin2023stochastic}]\label{lemma:sum_prod}
   Let $q\in [0,1), p \geq 0, \gamma_0 >0$, and let $\eta_t = \left( \frac{2}{t+2} \right)^q$ and $\gamma_t = \left( \frac{2}{t+2} \right)^p$ for every integer $t$. Then, for any integer $t$ and $T \geq 1$, 
   \begin{eqnarray*}
       \sum_{t=0}^{T-1} \gamma_t \prod_{\tau=t+1}^{T-1} (1-\eta_\tau) \leq C(p,q) \frac{\gamma_T}{\eta_T},
   \end{eqnarray*}
   where $C(p,q) = 2^{p-q} (1-q)^{-1} t_0(p,q) \exp( 2^q (1-q) t_0 ^{1-q})  + 2^{2p+1-q} (1-q)^{-2}$ and $t_0(p,q) = \max\left\{ \left(  \frac{p}{(1-q) 2^q}\right)^{\frac{1}{1-q}} , 2 \frac{p-q}{(1-q)^2}  \right\}^{\frac{1}{1-q}}$.
\end{lemma}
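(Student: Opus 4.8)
The plan is to collapse the weighted sum into the product of an exponentially decaying factor and a rapidly growing one, estimate the growing factor by an integral, and split the range of summation at the threshold $t_0(p,q)$ so that the two pieces produce exactly the two additive terms of $C(p,q)$. Write $S := \sum_{t=0}^{T-1}\gamma_t \prod_{\tau=t+1}^{T-1}(1-\eta_\tau)$ and recall that the target quantity is $\gamma_T/\eta_T = 2^{p-q}(T+2)^{q-p}$. The first step is to control the product by the elementary inequality $1-x\leq e^{-x}$, which is valid here since every $\eta_\tau=(2/(\tau+2))^q\in(0,1]$; this yields $\prod_{\tau=t+1}^{T-1}(1-\eta_\tau)\leq \exp\!\big(-\sum_{\tau=t+1}^{T-1}\eta_\tau\big)$. (Equivalently one may start from the one-step recursion $S_{T+1}=(1-\eta_T)S_T+\gamma_T$ and argue by induction, but the same threshold phenomenon described below reappears there.)

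The second step converts the exponent into closed form by comparing the sum to an integral. Since $s\mapsto(s+2)^{-q}$ is decreasing, $\sum_{\tau=t+1}^{T-1}\eta_\tau \geq 2^q\int_{t+1}^{T}(s+2)^{-q}\,ds = \tfrac{2^q}{1-q}\big[(T+2)^{1-q}-(t+3)^{1-q}\big]$. Substituting this lower bound and factoring out the part that does not depend on $t$ gives
\[
S \;\leq\; \exp\!\left(-\tfrac{2^q}{1-q}(T+2)^{1-q}\right)\underbrace{\sum_{t=0}^{T-1}\gamma_t\exp\!\left(\tfrac{2^q}{1-q}(t+3)^{1-q}\right)}_{=:\,W}.
\]
It then remains to show that $W \leq C(p,q)\,(\gamma_T/\eta_T)\,\exp\!\big(\tfrac{2^q}{1-q}(T+2)^{1-q}\big)$, i.e.\ that the growth of $W$ matches the reinstated exponential up to the polynomial factor $(T+2)^{q-p}$ and the constant $C(p,q)$.

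The core estimate treats $W$ by an integral comparison after writing each summand as $\gamma_t\exp(\cdots)=2^{p-q}(t+2)^{-p}(t+3)^{q}\,h'(t)$, where $h(s):=\exp\!\big(\tfrac{2^q}{1-q}(s+3)^{1-q}\big)$ satisfies $h'(s)=2^q(s+3)^{-q}h(s)$. One then splits the sum at $t_0=t_0(p,q)$. For $t\geq t_0$ the map $s\mapsto(s+2)^{q-p}h(s)$ is monotone increasing — this is precisely the property that the explicit formula for $t_0$ is designed to guarantee — so the tail sum is dominated by an integral of $h'$ weighted by the slowly varying polynomial prefactor, which integrates to a constant multiple of $(T+2)^{q-p}h(T)$ and contributes the term $2^{2p+1-q}(1-q)^{-2}\,\gamma_T/\eta_T$. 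For the at most $t_0$ head terms, one bounds the polynomial and exponential factors by their maxima on $[0,t_0]$, producing a contribution proportional to $t_0\exp\!\big(2^q(1-q)t_0^{1-q}\big)$, the first term of $C(p,q)$.

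The main obstacle — and the reason $C(p,q)$ is so intricate rather than a clean $\tilde{\cO}$ constant — is controlling the polynomial prefactor $(t+2)^{q-p}$, which is non-monotone relative to the exponential and whose monotonicity flips with the sign of $q-p$. This is exactly what forces the introduction of the threshold $t_0$: below it the exponential decay dominates and only a crude bound on finitely many terms is needed, while above it the prefactor is tame enough for the integration-by-parts estimate to deliver a bound uniform in $T$. Patching the two regimes together with fully explicit constants, as opposed to asymptotic ones, is the delicate and bookkeeping-heavy heart of the argument.
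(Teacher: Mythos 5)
This lemma is never proved in the paper: it is imported verbatim, with attribution, as Lemma 15 of \citet{fatkhullin2023stochastic}, so there is no in-paper proof to compare your attempt against. Judged on its own terms, your strategy is the standard route for bounds of this type (and very likely the route taken in the cited source): bound the product via $1-x\le e^{-x}$, lower-bound the resulting exponent by a sum-to-integral comparison so that
$\prod_{\tau=t+1}^{T-1}(1-\eta_\tau)\le\exp\left(-\tfrac{2^q}{1-q}\left[(T+2)^{1-q}-(t+3)^{1-q}\right]\right)$,
and then split the weighted sum at a threshold $t_0$, treating the finitely many head terms crudely and the tail by an integral estimate that produces a multiple of $\gamma_T/\eta_T$.

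That said, what you have written is a plan rather than a proof, and the two key estimates are asserted, not executed. For the tail, the sum-to-integral step requires monotonicity of the actual summand, which is (up to benign factors) $s\mapsto(s+2)^{q-p}h'(s)$, not of $(s+2)^{q-p}h(s)$ as you claim; moreover, when $p>q$ the integration by parts leaves the positive remainder $(p-q)\int_{t_0}^{T}(s+2)^{q-p-1}h(s)\,ds$, which must be absorbed by a bootstrap (choosing $t_0$ so this remainder is at most half the main integral) before you obtain a bound of the form $\mathrm{const}\cdot(T+2)^{q-p}h(T)$ — neither the correct monotonicity condition nor this absorption argument appears in your write-up, and they are exactly what the second component of $t_0(p,q)$ exists to enable. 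For the head, after reinstating the factor $\exp\left(-\tfrac{2^q}{1-q}(T+2)^{1-q}\right)$ you are left needing a uniform-in-$T$ bound of the form $(T+2)^{p-q}\exp\left(-\tfrac{2^q}{1-q}(T+2)^{1-q}\right)\le\mathrm{const}$ in order to trade exponential decay for the polynomial $\gamma_T/\eta_T$; making this supremum quantitative (it is maximized where $(T+2)^{1-q}$ is of order $(p-q)2^{-q}$) is precisely what generates the exponential factor in the first term of $C(p,q)$, and this step is missing. Neither gap is conceptual — both are routine but delicate calculus — so the proposal should be regarded as a correct outline whose constants are matched to the statement by assertion rather than derivation.
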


\newpage 
\section{EF21-SGDM}\label{sec:EF21_SGDM}

In \algname{EF21-SGDM}, we update the iterates $\{x^t\}$ according to: 
\begin{eqnarray}
    x^{t+1} = x^t - \gamma_t \frac{g^t}{\norm{g^t}}, \notag
\end{eqnarray}
where $g^t = \frac{1}{n}\sum_{i=1}^n g_i^t$ and $v^t = \frac{1}{n}\sum_{i=1}^n v_i^t$ with $g_i^t,v_i^t$ being governed by 
\begin{eqnarray}
    g_i^{t+1} & = & g_i^t + \mathcal{C}_i^{t+1}(v_i^{t+1} - g_i^t), \quad \text{and} \notag\\
    v_i^{t+1} & = & (1-\eta_t)v_i^t + \eta_t \nabla f_i \left( x^{t+1}  ;\xi_i^{t+1} \right). \notag
\end{eqnarray}

\subsection{Convergence Proof}

We prove the result in the following steps. 

\subsubsection{Deriving the descent inequality} 

From \Cref{lem:descent_lemma}, we obtain \eqref{eq:descent_lemma}:
\begin{equation*}
    \Delta_{t+1} + \gamma_t \|\nabla f(x^t)\| \le \Delta_t  + 2 \gamma_t \norm{v^t- \nabla f(x^t)} +\frac{ 2\gamma_t}{n}\sum_{i=1}^n\norm{g_i^t-v_i^t} + \frac{\gamma_t^2L}{2}.
\end{equation*}

\subsubsection{Error bound I}

Define $\cV_t \eqdef \frac{1}{n}\sum_{i=1}^n \norm{g_i^t-v_i^t}$. From~\Cref{lemma:boundg_and_v}, 
%
%
\begin{eqnarray*}
   \Exp{\cV_{t+1}}  \leq \sqrt{1-\alpha}    \Exp{\cV_{t}} + \frac{\sqrt{1-\alpha}}{n} \sum_{i=1}^n \Exp{\norm{v_i^{t+1}-v_i^t}}.
\end{eqnarray*}

To complete the upper-bound for $ \Exp{\cV_{t+1}}$, we bound $\Exp{\norm{v_i^{t+1}-v_i^t}}$ as follows:
\begin{eqnarray*}
    \Exp{\norm{v_i^{t+1}-v_i^t}}
    & \leq & \eta_t \Exp{\norm{ \nabla f_i(x^{t+1};\xi_i^{t+1}) - v_i^t }} \\
    & \leq & \eta_t \Exp{\norm{v_i^t-\nabla f_i(x^t)}} + \eta_t \Exp{\norm{\nabla f_i(x^t) - \nabla f_i(x^{t+1})}} \\
    && + \eta_t \Exp{\norm{\nabla f_i(x^{t+1};\xi_i^{t+1}) - \nabla f_i(x^{t+1})}}.
\end{eqnarray*}

Next, by the $L_i$-smoothness of $f_i(\cdot)$, by Jensen's inequality, and by the fact that $\Exp{\sqnorm{\nabla f_i(x^{t+1};\xi_i^{t+1}) - \nabla f_i(x^{t+1})}} \leq \sigma_g^2$, 
\begin{eqnarray*}
    \Exp{\norm{v_i^{t+1}-v_i^t}}
   \leq \eta_t \Exp{\norm{v_i^t-\nabla f_i(x^t)}} + \eta_t L_i \gamma_t + \eta_t \sigma_g.
\end{eqnarray*}
Therefore, 
\begin{eqnarray*}
    \Exp{\cV_{t+1}}  \leq \sqrt{1-\alpha} \Exp{cV_t} + \sqrt{1-\alpha} \eta_t \Exp{\cU_t} + \sqrt{1-\alpha} \eta_t\gamma_t \bar L + \sqrt{1-\alpha} \eta_t \sigma_g,
\end{eqnarray*}
where $\bar L =\frac{1}{n}\sum_{i=1}^n L_i$.

\subsubsection{Error bound II}

We consider $\norm{v_i^{t+1} - \nabla f_i(x^{t+1})}$. From the definition of $v_i^{t+1}$, 
\begin{eqnarray*}
    v_i^{t+1} - \nabla f_i(x^{t+1})
    & = & (1-\eta_t)(v_i^t - \nabla f_i(x^t)) + (1-\eta_t)(\nabla f_i(x^{t}) - \nabla f_i(x^{t+1})) \\
    && + \eta_t (\nabla f_i(x^{t+1};\xi_i^{t+1}) - \nabla f_i(x^{t+1})).
\end{eqnarray*}

Next, define $\cU_t = \frac{1}{n}\sum_{i=1}^n \norm{v_i^t - \nabla f_i(x^t)}$. Then, by the triangle inequality, 
\begin{eqnarray*}
    \Exp{\cU_{t+1}} 
    & \leq & \frac{1}{n}\sum_{i=1}^n \Exp{ \norm{v_i^{t+1} - \nabla f_i(x^{t+1})} } \\
    & \leq & (1-\eta_t) \Exp{\cU_t} + (1-\eta_t) \frac{1}{n}\sum_{i=1}^n \Exp{\norm{ \nabla f_i(x^{t}) - \nabla f_i(x^{t+1} }}  \\
    && + \eta_t \frac{1}{n}\sum_{i=1}^n \Exp{\norm{ \nabla f_i(x^{t+1};\xi_i^{t+1}) - \nabla f_i(x^{t+1}) }}.
\end{eqnarray*}

Next, by the $L_i$-smoothness of $f_i(\cdot)$, by Jensen's inequality, and by the fact that $\Exp{\sqnorm{\nabla f_i(x^{t+1};\xi_i^{t+1}) - \nabla f_i(x^{t+1})}} \leq \sigma_g^2$,
\begin{eqnarray*}
    \Exp{\cU_{t+1}} 
    & \leq & (1-\eta_t) \Exp{\cU_t} + (1-\eta_t) \frac{1}{n}\sum_{i=1}^n  L_i \Exp{\norm{ x^{t} - x^{t+1} }} + \eta_t \sigma_g \\
    &\leq& (1-\eta_t) \Exp{\cU_t} + (1-\eta_t) \gamma_t \bar L + \eta_t \sigma_g,
\end{eqnarray*}
where in the last inequality we used the update rule for $x^{t+1}$.

\subsubsection{Error Bound III} 
From the recursion of $v_i - \nabla f_i(x^t)$, and by the fact that $v^t = \frac{1}{n}\sum_{i=1}^n v_i^t$ and $\nabla f(x) = \frac{1}{n}\sum_{i=1}^n \nabla f_i(x)$,
\begin{eqnarray*}
    v^{t+1} - \nabla f(x^{t+1})
    & = & (1-\eta_t)(v^t - \nabla f(x^t)) + (1-\eta_t)(\nabla f(x^{t}) - \nabla f(x^{t+1})) \\
    && + \eta_t \frac{1}{n}\sum_{i=1}^n [\nabla f_i(x^{t+1};\xi_i^{t+1}) - \nabla f_i(x^{t+1})].
\end{eqnarray*}

Next, from~\Cref{lemma:summing_the_recursion}, 
\begin{eqnarray*}
 e_{t+1} = \prod_{\tau=0}^t (1-\eta_\tau) {e}_0 + \sum_{j=0}^t \left(\prod_{\tau=j+1}^t (1-\eta_\tau)\right) (1-\eta_j) {A}_{j+1} + \sum_{j=0}^t \left(\prod_{\tau=j+1}^t (1-\eta_\tau)\right) \eta_j B_{j+1},
\end{eqnarray*}
where $e_t = v^t - \nabla f(x^t)$, $A_{t+1} = \nabla f(x^t)-\nabla f(x^{t+1})$, and $B_{t+1} = \frac{1}{n}\sum_{i=1}^n [\nabla f_i(x^{t+1};\xi_i^{t+1}) - \nabla f_i(x^{t+1})]$.

Therefore, from the definition of the Euclidean norm and by the triangle inequality, 
\begin{eqnarray*}
    \Exp{\norm{e_{t+1}}} 
    & \leq & \prod_{\tau=0}^t (1-\eta_\tau) \Exp{\norm{{e}_0}} + \sum_{j=0}^t \left(\prod_{\tau=j+1}^t (1-\eta_\tau)\right) (1-\eta_j) \Exp{\norm{{A}_{j+1}}} \\
    && + \Exp{\norm{\sum_{j=0}^t \left(\prod_{\tau=j+1}^t (1-\eta_\tau)\right) \eta_j B_{j+1}}}.
\end{eqnarray*}

Next, by the $L_i$-smoothness of $f_i(\cdot)$ and by the definition of $x^{t+1}$, we can show that $\norm{A_{j+1}} \leq \bar L \gamma_j$. Therefore, 
\begin{eqnarray*}
    \Exp{\norm{e_{t+1}}} 
    & \leq & \prod_{\tau=0}^t (1-\eta_\tau) \Exp{\norm{{e}_0}} + \bar L \cdot \sum_{j=0}^t \left(\prod_{\tau=j+1}^t (1-\eta_\tau)\right) (1-\eta_j) \gamma_j  \\
    && + \Exp{\norm{\sum_{j=0}^t \left(\prod_{\tau=j+1}^t (1-\eta_\tau)\right) \eta_j B_{j+1}}}.
\end{eqnarray*}

Next, since 
\begin{eqnarray*}
    \Exp{\norm{\sum_{j=0}^t \left(\prod_{\tau=j+1}^t (1-\eta_\tau)\right) \eta_j B_{j+1}}} 
    & \overset{(a)}{\leq} & \sqrt{  \Exp{\sqnorm{\sum_{j=0}^t \left(\prod_{\tau=j+1}^t (1-\eta_\tau)\right) \eta_j B_{j+1}}} } \\
    & \overset{(b)}{\leq} &  \frac{\sigma_g}{\sqrt{n}} \sqrt{ \sum_{j=0}^t \left(\prod_{\tau=j+1}^t (1-\eta_\tau)^2\right) \eta_j^2 },
\end{eqnarray*}
where in $(a)$ we used Jensen's inequality, in $(b)$ we used \Cref{assum:stoc_g_h},
we obtain
\begin{eqnarray*}
    \Exp{\norm{e_{t+1}}} 
    & \leq & \prod_{\tau=0}^t (1-\eta_\tau) \Exp{\norm{{e}_0}} + \bar L \cdot \sum_{j=0}^t \left(\prod_{\tau=j+1}^t (1-\eta_\tau)\right) (1-\eta_j) \gamma_j  \\
    && + \frac{\sigma_g}{\sqrt{n}} \sqrt{ \sum_{j=0}^t \left(\prod_{\tau=j+1}^t (1-\eta_\tau)^2\right) \eta_j^2 }\\
     & \leq & \prod_{\tau=0}^t (1-\eta_\tau) \Exp{\norm{{e}_0}} + \bar L \cdot \sum_{j=0}^t \left(\prod_{\tau=j+1}^t (1-\eta_\tau)\right)  \gamma_j  \\
    && + \frac{\sigma_g}{\sqrt{n}} \sqrt{ \sum_{j=0}^t \left(\prod_{\tau=j+1}^t (1-\eta_\tau)\right) \eta_j^2 },
\end{eqnarray*}
where in the last inequality we used $1-\eta_j \leq 1$.

If $\eta_t = \left( \frac{2}{t+2} \right)^{1/2}$ and $\gamma_t = \gamma_0 \left( \frac{2}{t+2} \right)^{3/4}$, then we can prove  that $\eta_0=1$, and that
\begin{eqnarray*}
    \Exp{\norm{e_{t+1}}} 
    & \leq &  \bar L \cdot \sum_{j=0}^t \left(\prod_{\tau=j+1}^t (1-\eta_\tau)\right)  \gamma_j  + \frac{\sigma_g}{\sqrt{n}} \sqrt{ \sum_{j=0}^t \left(\prod_{\tau=j+1}^t (1-\eta_\tau)\right) \eta_j^2 }.
\end{eqnarray*}
In conclusion, from~\Cref{lemma:sum_prod}, 
\begin{eqnarray*}
    \Exp{\norm{v^{t+1}  - \nabla f(x^{t+1})}} 
    & \leq &  \bar L C(\nicefrac{3}{4},\nicefrac{1}{2}) \frac{\gamma_{t+1}}{\eta_{t+1}}   + \frac{\sigma_g}{\sqrt{n}} \sqrt{ C(1,\nicefrac{1}{2}) \eta_{t+1} }.
\end{eqnarray*}

\subsubsection{Bounding Lyapunov function }

Define Lyapunov function as  $V_t = \Delta_{t} + C_{1,t} \cV_t + C_{2,t} \cU_t$ with $C_{1,t} = \frac{2\gamma_t}{1-\sqrt{1-\alpha}}$ and $C_{2,t} = \frac{2\gamma_t \sqrt{1-\alpha}}{1-\sqrt{1-\alpha}}$. Then, 
\begin{eqnarray*}
    \Exp{V_{t+1}} 
    & = & \Exp{\Delta_{t+1} + C_{1,t+1} \cV_t + C_{2,t+1} \cU_{t+1}} \\
    & \leq & \Exp{\Delta_t} - \gamma_t \Exp{\norm{\nabla f(x^t)}} + 2 \gamma_t \Exp{\norm{v^t- \nabla f(x^t)}} + 2\gamma_t \Exp{\cV_t} + \frac{\gamma_t^2L}{2} \\
    && + C_{1,t+1} \Exp{\cV_{t+1}} + C_{2,t+1} \Exp{\cU_{t+1}},
\end{eqnarray*}
where in the last inequality we used \eqref{eq:descent_lemma}.
Next, by the upper-bounds for $\Exp{\cV_{t+1}}$ and  $\Exp{\cU_{t+1}}$, 
\begin{eqnarray*}
    \Exp{ V_{t+1} }
   & \leq & \Exp{\Delta_t} - \gamma_t \Exp{\norm{\nabla f(x^t)}} + 2 \gamma_t \Exp{\norm{v^t- \nabla f(x^t)}} + 2\gamma_t \Exp{\cV_t} + \frac{\gamma_t^2L}{2} \\
    && + C_{1,t+1} (\sqrt{1-\alpha} \Exp{\cV_t} + \sqrt{1-\alpha} \eta_t \Exp{\cU_t} + \sqrt{1-\alpha} \eta_t\gamma_t \bar L + \sqrt{1-\alpha} \eta_t \sigma_g) \\
    && + C_{2,t+1} ((1-\eta_t) \Exp{\cU_t} + (1-\eta_t) \gamma_t \bar L + \eta_t \sigma_g).
\end{eqnarray*}

Next,  since $\gamma_{t+1} \leq \gamma_t$, we can prove that $C_{1,t+1} \leq C_{1,t}$ and that $C_{2,t+1} \leq C_{2,t}$, and also that 
\begin{eqnarray*}
    2\gamma_t + C_{1,t+1}\sqrt{1-\alpha}  & \leq & 2\gamma_t + C_{1,t}\sqrt{1-\alpha} = C_{1,t} \\
    C_{1,t+1} \sqrt{1-\alpha}\eta_t + C_{2,t+1} (1-\eta_t) & \leq& C_{1,t} \sqrt{1-\alpha}\eta_t + C_{2,t} (1-\eta_t)= C_{2,t}
\end{eqnarray*}
Therefore, 
\begin{eqnarray*}
   \Exp{ V_{t+1} } 
   & \leq & \Exp{V_t} - \gamma_t \Exp{\norm{\nabla f(x^t)}} + 2 \gamma_t \Exp{\norm{v^t- \nabla f(x^t)}}  + \frac{\gamma_t^2L}{2} \\
    && + C_{1,t} (\sqrt{1-\alpha} \eta_t\gamma_t \bar L + \sqrt{1-\alpha} \eta_t \sigma_g) \\
    && + C_{2,t} ( (1-\eta_t) \gamma_t \bar L + \eta_t \sigma_g) \\
     & \leq & \Exp{V_t} - \gamma_t \Exp{\norm{\nabla f(x^t)}} + 2 \gamma_t \Exp{\norm{v^t- \nabla f(x^t)}}   \\
    && + \gamma_t^2 \left( \frac{2\sqrt{1-\alpha}}{1-\sqrt{1-\alpha}} \bar L + \frac{L}{2} \right) + \eta_t\gamma_t \frac{4\sqrt{1-\alpha}}{1-\sqrt{1-\alpha}} \sigma_g.
\end{eqnarray*}

Next, from the upper-bound of $\Exp{\norm{v^t- \nabla f(x^t)}}$, 
\begin{eqnarray*}
    \Exp{ V_{t+1} } 
    & \leq & \Exp{V_t} - \gamma_t \Exp{\norm{\nabla f(x^t)}} + 2 \gamma_t \left( \bar L C(\nicefrac{3}{4},\nicefrac{1}{2}) \frac{\gamma_{t}}{\eta_{t}}   + \frac{\sigma_g}{\sqrt{n}} \sqrt{ C(1,\nicefrac{1}{2}) \eta_{t} } \right)  \\
    && + \gamma_t^2 \left( \frac{2\sqrt{1-\alpha}}{1-\sqrt{1-\alpha}} \bar L + \frac{L}{2} \right) + \eta_t\gamma_t \frac{4\sqrt{1-\alpha}}{1-\sqrt{1-\alpha}} \sigma_g \\
    & = & \Exp{V_t} - \gamma_t \Exp{\norm{\nabla f(x^t)}} + \frac{\gamma_t^2}{\eta_t} \cdot 2 \bar L C_1 + \gamma_t \sqrt{\eta_t} \cdot \frac{2 \sigma_g C_2}{\sqrt{n}}  \\
    && + \gamma_t^2 \left( \frac{2\sqrt{1-\alpha}}{1-\sqrt{1-\alpha}} \bar L + \frac{L}{2} \right) +  \eta_t\gamma_t \cdot  \frac{4\sqrt{1-\alpha}}{1-\sqrt{1-\alpha}}  \sigma_g,
\end{eqnarray*}
where  we denoted $C_1 = C(\nicefrac{3}{4}, \nicefrac{1}{2})$, $C_2 = \sqrt{C(1, \nicefrac{1}{2})}$.

\subsubsection{Deriving the convergence rate}
By re-arranging the terms and by the telescopic series, 
\begin{eqnarray*}
    \frac{\sum_{t=0}^{T-1} \gamma_t\Exp{\norm{\nabla f(x^t)}}}{\sum_{t=0}^{T-1} \gamma_t}  
    & \leq & \frac{\Exp{V_0} -\Exp{V_T}}{\sum_{t=0}^{T-1}\gamma_t}  + 2 \bar L C_1 \frac{\sum_{t=0}^{T-1} \gamma_t^2 \eta_t^{-1}}{\sum_{t=0}^{T-1}\gamma_t} + \frac{2\sigma_g C_2}{\sqrt{n}} \frac{\sum_{t=0}^{T-1} \gamma_t \sqrt{\eta_t}}{ \sum_{t=0}^{T-1} \gamma_t} \\
    && +  \left( \frac{2\sqrt{1-\alpha}}{1-\sqrt{1-\alpha}} \bar L + \frac{L}{2} \right) \frac{\sum_{t=0}^{T-1} \gamma_t^2}{ \sum_{t=0}^{T-1} \gamma_t} +     \frac{4\sqrt{1-\alpha}}{1-\sqrt{1-\alpha}}  \sigma_g \frac{\sum_{t=0}^{T-1} \eta_t\gamma_t}{ \sum_{t=0}^{T-1}\gamma_t}\\
    &\leq& \frac{\Exp{V_0}}{\sum_{t=0}^{T-1}\gamma_t} + 2 \bar L C_1 \frac{\sum_{t=0}^{T-1} \gamma_t^2\eta_t^{-1}}{\sum_{t=0}^{T-1}\gamma_t} +  \left( \frac{2}{\alpha^2} \bar L + \frac{L}{2} \right) \frac{\sum_{t=0}^{T-1} \gamma_t^2}{ \sum_{t=0}^{T-1} \gamma_t}\\
    && + \frac{2\sigma_g C_2}{\sqrt{n}} \frac{\sum_{t=0}^{T-1} \gamma_t \sqrt{\eta_t}}{ \sum_{t=0}^{T-1} \gamma_t} +  \frac{4\sigma_g}{\alpha^2}  \frac{\sum_{t=0}^{T-1} \eta_t\gamma_t}{ \sum_{t=0}^{T-1}\gamma_t},
\end{eqnarray*}
where in the last inequality we used $\frac{\sqrt{1-\alpha}}{1-\sqrt{1-\alpha}} \leq \frac{1}{\alpha^2}$ for any $\alpha \in (0,1]$.
To continue the proof, we need to provide bounds on several sum-type terms. Since we set $\eta_t = \left( \frac{2}{t+2} \right)^{\nicefrac{1}{2}}$ and $\gamma_t = \gamma_0 \left( \frac{2}{t+2} \right)^{\nicefrac{3}{4}}$, we have 
\begin{eqnarray*}
    \sum_{t=0}^{T-1} \gamma_t &\geq&  T \gamma_{T-1} = \gamma_0T  \left(\frac{2}{T+1}\right)^{\nicefrac{3}{4}} \geq \gamma_0 T^{\nicefrac{1}{4}};\\
    \sum_{t=0}^{T-1} \gamma_t^2 \eta_t^{-1}  &=& \gamma_0^2\sum_{t=0}^{T-1}  \left( \frac{2}{t+2} \right)^{\nicefrac{3}{2}} \left( \frac{2}{t+2} \right)^{-\nicefrac{1}{2} } = 2\gamma_0^2 \sum_{t=0}^{T-1} \frac{1}{t+2}   = 2\gamma_0^2 \sum^{T}_{t=1}\frac{1}{1+t}\\
    &\leq& 2\gamma^2_0 \int^T_{1}\frac{1}{1+t}dt = 2\gamma^2_0 \left(\log \left(T+1\right) -\log(2)\right) \leq 2\gamma^2_0 \log \left(T+1\right); \\
    \sum_{t=0}^{T-1} \gamma_t \sqrt{\eta_t} &=& \gamma_0 \sum_{t=0}^{T-1} \left( \frac{2}{t+2} \right)^{\nicefrac{3}{4}} \left( \frac{2}{t+2} \right)^{\nicefrac{1}{4} } =   2\gamma_0 \sum_{t=0}^{T-1}\frac{1}{t+2} \leq 2\gamma_0 \log\left(T+1\right);\\
    \sum_{t=0}^{T-1} \gamma_t^2 &=& \gamma_0^2 \sum_{t=0}^{T-1} \left( \frac{2}{t+2} \right)^{\nicefrac{3}{2}}  \leq 2^{\nicefrac{3}{2}}\gamma_0^2 \sum_{t=0}^{T-1} \frac{1}{(t+2)^{\nicefrac{3}{2}}} = 2^{\nicefrac{3}{2}}\gamma_0^2 \sum_{t=1}^{T} \frac{1}{(1+t)^{\nicefrac{3}{2}}}\\
    &\leq& 2^{\nicefrac{3}{2}}\gamma_0^2 \int^T_{1}\frac{1}{(1+t)^{\nicefrac{3}{2}}}dt  = 2^{\nicefrac{5}{2}}\gamma^2_0 \left(\frac{1}{\sqrt{2}} - \frac{1}{\sqrt{T+1}}\right) \leq 4\gamma^2_0;\\
    \sum_{t=0}^{T-1} \gamma_t \eta_t &=& \gamma_0 \sum_{t=0}^{T-1}  \left( \frac{2}{t+2} \right)^{\nicefrac{3}{4}} \left( \frac{2}{t+2} \right)^{\nicefrac{1}{2} } = 2^{\nicefrac{5}{4}}\gamma_0 \sum^{T-1}_{t=0} \frac{1}{(t+2)^{\nicefrac{5}{4}}} = 2^{\nicefrac{5}{4}}\gamma_0 \sum^{T}_{t=1} \frac{1}{(t+1)^{\nicefrac{5}{4}}}\\
    &\leq& 2^{\nicefrac{5}{4}}\gamma_0\int^T_{1}\frac{1}{(1+t)^{\nicefrac{5}{4}}}dt = 4\cdot 2^{\nicefrac{5}{4}} \gamma_0\left(\frac{1}{2^{\nicefrac{1}{4}}} - \frac{1}{(T+1)^{\nicefrac{1}{4}}}\right) \leq 8\gamma_0.
\end{eqnarray*}
Therefore, denoting $\tilde{x}^T$ as a point randomly chosen from $\{x_0,x_1,\ldots,x_{T-1}\}$ with probability $\nicefrac{\gamma_t}{\sum_{t=0}^{T-1}\gamma_t}$ for $t=0,1,\ldots,T-1$, we obtain 
\begin{eqnarray*}
     \Exp{\norm{\nabla f(\tilde{x}^T)}} 
    & \leq & \frac{\Exp{V_0}}{\gamma_0 T^{\nicefrac{1}{4}}} + 2 \bar L C_1 \frac{2\gamma^2_0 \log \left(T+1\right)}{\gamma_0 T^{\nicefrac{1}{4}}} +  \left( \frac{2}{\alpha^2} \bar L + \frac{L}{2} \right) \frac{4\gamma^2_0}{ \gamma_0 T^{\nicefrac{1}{4}}}\\
    && + \frac{2\sigma_g C_2}{\sqrt{n}} \frac{2\gamma_0 \log \left(T+1\right)}{ \gamma_0 T^{\nicefrac{1}{4}}} +  \frac{4\sigma_g}{\alpha^2}  \frac{8\gamma_0}{ \gamma_0 T^{\nicefrac{1}{4}}}\\
    &=& \frac{\Exp{V_0}}{\gamma_0 T^{\nicefrac{1}{4}}} + \frac{2\gamma_0}{T^{\nicefrac{1}{4}}}\left(L + 4C_1{\bar L}\log\left(T+1\right) + \frac{4\bar L}{\alpha^2}\right)\\
    && +\frac{4}{T^{\nicefrac{1}{4}}}\left(C_2 \frac{\sigma_g}{\sqrt{n}}\log(T+1) +\frac{8\sigma_g}{\alpha^2} \right)\\
    &=& \widetilde{\cO}\left(\frac{\nicefrac{\Exp{V_0}}{\gamma_0} + \gamma_0\left(L + \nicefrac{\bar L}{\alpha^2}\right) + \sigma_g\left(\nicefrac{1}{\sqrt{n}} + \nicefrac{1}{\alpha^2}\right)}{T^{\nicefrac{1}{4}}}\right).
\end{eqnarray*}

\newpage 
\section{EF21-IGT}

In \algname{EF21-IGT}, we update the iterates $\{x^t\}$ according to: 
\begin{eqnarray}
    x^{t+1} = x^t - \gamma_t \frac{g^t}{\norm{g^t}}, \notag
\end{eqnarray}
where $g^t = \frac{1}{n}\sum_{i=1}^n g_i^t$ and $v^t = \frac{1}{n}\sum_{i=1}^n v_i^t$ with $g_i^t,v_i^t$ being governed by 
\begin{eqnarray}
    g_i^{t+1} & = & g_i^t + \mathcal{C}_i^{t+1}(v_i^{t+1} - g_i^t), \quad \text{and} \notag\\
    v_i^{t+1} & = & (1-\eta_t)v_i^t + \eta_t \nabla f_i \left( x^{t+1} + \frac{1-\eta_t}{\eta_t} (x^{t+1}-x^t) ;\xi_i^{t+1} \right). \notag
\end{eqnarray}
For simplicity, we denote $y^{t+1} = x^{t+1} +\theta_t(x^{t+1} - x^t)$, where $\theta_t = \frac{1-\eta_t}{\eta_t}$.

\subsection{Convergence Proof}

We prove the result in the following steps. 

\subsubsection{Deriving the descent inequality} From \Cref{lem:descent_lemma}, we obtain \eqref{eq:descent_lemma}:
\begin{equation*}
    \Delta_{t+1} + \gamma_t \|\nabla f(x^t)\| \le \Delta_t  + 2 \gamma_t \norm{v^t- \nabla f(x^t)} +\frac{ 2\gamma_t}{n}\sum_{i=1}^n\norm{g_i^t-v_i^t} + \frac{\gamma_t^2L}{2}.
\end{equation*}

\subsubsection{Error bound I} 

Define $\cV_t \eqdef \frac{1}{n}\sum_{i=1}^n \norm{g_i^t-v_i^t}$. From~\Cref{lemma:boundg_and_v}, 
\begin{eqnarray*}
   \Exp{\cV_{t+1}}  \leq \sqrt{1-\alpha}    \Exp{\cV_{t}} + \frac{\sqrt{1-\alpha}}{n} \sum_{i=1}^n \Exp{\norm{v_i^{t+1}-v_i^t}}.
\end{eqnarray*}

To finalize the upper-bound for $\Exp{\cV_{t+1}}$, we bound $\Exp{\norm{v_i^{t+1}-v_i^t}}$.
\begin{eqnarray*}
    \Exp{\norm{v_i^{t+1}-v_i^t}}
    & = & \eta_t \Exp{\norm{ \nabla f_i(y^{t+1};\xi_i^{t+1}) - v_i^t }} \\
    & \leq & \eta_t \Exp{\norm{v_i^t-\nabla f_i(x^t)}} + \eta_t \Exp{\norm{\nabla f_i(x^t) - \nabla f_i(y^{t+1})}} \\
     && + \eta_t \Exp{\norm{\nabla f_i(y^{t+1};\xi_i^{t+1}) - \nabla f_i(y^{t+1})}}. 
\end{eqnarray*}

Next, by the $L_i$-smoothness of $f_i$ and from the definition of $y^{t+1}$
\begin{eqnarray*}
  \Exp{\norm{v_i^{t+1}-v_i^t}}  
    & \leq & \eta_t \Exp{\norm{v_i^t-\nabla f_i(x^t)}} + \eta_t L_i \Exp{\norm{ x^t - y^{t+1}}} \\
    && + \eta_t \Exp{\norm{\nabla f_i(y^{t+1};\xi_i^{t+1}) - \nabla f_i(y^{t+1})}} \\
    & \leq & \eta_t \Exp{\norm{v_i^t-\nabla f_i(x^t)}} + \eta_t\left(1+ \theta_t \right) L_i \Exp{\norm{ x^t -  x^{t+1}}} \\
    && + \eta_t \Exp{\norm{\nabla f_i(y^{t+1};\xi_i^{t+1}) - \nabla f_i(y^{t+1})}} \\
    & \leq & \eta_t \Exp{\norm{v_i^t-\nabla f_i(x^t)}} + \gamma_t L_i \\
    && + \eta_t \Exp{\norm{\nabla f_i(y^{t+1};\xi_i^{t+1}) - \nabla f_i(y^{t+1})}},
\end{eqnarray*}
where in the last inequality we used the update rule for $x_{t+1}$ and $\theta_t = \frac{1-\eta_t}{\eta_t}$. 
Next, by~\Cref{assum:stoc_g_h}, 
\begin{eqnarray*}
  \Exp{\norm{v_i^{t+1}-v_i^t}}
  \leq \eta_t \Exp{\norm{v_i^t-\nabla f_i(x^t)}} +  L_i \gamma_t + \eta_t \sigma_g. 
\end{eqnarray*}

Therefore,
\begin{eqnarray*}
   \Exp{\cV_{t+1}}  \leq \sqrt{1-\alpha}    \Exp{\cV_{t}} + \sqrt{1-\alpha} \eta_t \Exp{\cU_t} + \sqrt{1-\alpha} \bar L \gamma_t  + \sqrt{1-\alpha} \sigma_g \eta_t.
\end{eqnarray*}

\subsubsection{Error bound II}

We consider $\norm{v_i^{t+1} - \nabla f_i(x^{t+1})}$. From the definition of $v_i^{t+1}$, 
\begin{eqnarray*}
    v_i^{t+1} - \nabla f_i(x^{t+1})
    & = & (1-\eta_t)(v_i^t - \nabla f_i(x^t)) + (1-\eta_t)(\nabla f_i(x^{t}) - \nabla f_i(x^{t+1})) \\
    && + \eta_t(\nabla f_i(y^{t+1}) -\nabla f_i(x^{t+1})) + \eta_t (\nabla f_i(y^{t+1};\xi_i^{t+1}) - \nabla f_i(y^{t+1}))\\
    & = & (1-\eta_t)(v_i^t - \nabla f_i(x^t)) + (1-\eta_t)(\nabla f_i(x^{t}) - \nabla f_i(x^{t+1})) \\
    && + \eta_t(\nabla f_i(y^{t+1}) -\nabla f_i(x^{t+1})) + \eta_t (\nabla f_i(y^{t+1};\xi_i^{t+1}) - \nabla f_i(y^{t+1}))\\
    &&+ (1-\eta_t) \cdot \nabla^2 f_i(x^{t+1})(x^t-x^{t+1}) - (1-\eta_t) \cdot \nabla^2 f_i(x^{t+1})(x^t-x^{t+1}). 
\end{eqnarray*}

Next, by the fact that $y^{t+1}-x^{t+1} = \theta_t(x^{t+1}-x^t) = \frac{1-\eta_t}{\eta_t}(x^{t+1}-x^t)$, we can prove that 
\begin{eqnarray*}
    (1-\eta_t) \cdot \nabla^2 f_i(x^{t+1})(x^t-x^{t+1}) 
    &=& \eta_t \cdot \frac{1-\eta_t}{\eta_t} \nabla^2 f_i(x^{t+1})(x^t-x^{t+1}) \\
    &=& -\eta_t \nabla^2 f_i(x^{t+1})(y^{t+1}-x^{t+1}).
\end{eqnarray*}
Therefore, 
\begin{eqnarray*}
    v_i^{t+1} - \nabla f_i(x^{t+1})
    & = & (1-\eta_t)(v_i^t - \nabla f_i(x^t)) \\
    && + (1-\eta_t) Z_{f_i}(x^t,x^{t+1}) + \eta_t Z_{f_i}(y^{t+1},x^{t+1})\\
    && + \eta_t (\nabla f_i(y^{t+1};\xi_i^{t+1}) - \nabla f_i(y^{t+1})),
\end{eqnarray*}
where $Z_{f_i}(x,y) :=\nabla f_i(x) - \nabla f_i(y) -\nabla^2 f_i(y)(x-y)$.

Next, define $\cU_t = \frac{1}{n}\sum_{i=1}^n \norm{v_i^t - \nabla f_i(x^t)}$. Then, by the triangle inequality, 
\begin{eqnarray*}
    \Exp{\cU_{t+1}} 
    & \leq & \frac{1}{n}\sum_{i=1}^n \Exp{ \norm{v_i^{t+1} - \nabla f_i(x^{t+1})} } \\
    & \leq & (1-\eta_t) \Exp{\cU_t} + \eta_t \frac{1}{n}\sum_{i=1}^n \Exp{\norm{ \nabla f_i(y^{t+1};\xi_i^{t+1}) - \nabla f_i(y^{t+1}) }} \\
    && + (1-\eta_t) \frac{1}{n}\sum_{i=1}^n \Exp{\norm{ Z_{f_i}(x^t,x^{t+1}) }}  + \eta_t  \frac{1}{n}\sum_{i=1}^n \Exp{\norm{ Z_{f_i}(y^{t+1},x^{t+1}) }}.
\end{eqnarray*}

Next, by the fact that $\Exp{\sqnorm{\nabla f_i(x^{t+1};\xi_i^{t+1}) - \nabla f_i(x^{t+1})}} \leq \sigma_g^2$,
\begin{eqnarray*}
    \Exp{\cU_{t+1}} 
    & \leq & (1-\eta_t) \Exp{\cU_t} + \eta_t \sigma_g \\
    && + (1-\eta_t) \frac{1}{n}\sum_{i=1}^n \Exp{\norm{ Z_{f_i}(x^t,x^{t+1}) }}  + \eta_t  \frac{1}{n}\sum_{i=1}^n \Exp{\norm{ Z_{f_i}(y^{t+1},x^{t+1}) }}.
\end{eqnarray*}

By the $L_{h,i}$-Lipschiz continuity of $\nabla^2 f_i(\cdot)$, i.e. $\norm{Z_{f_i}(x,y)} \leq \frac{L_{h,i}}{2} \sqnorm{x-y}$,
\begin{eqnarray*}
    \Exp{\cU_{t+1}} 
    & \leq & (1-\eta_t) \Exp{\cU_t} + \eta_t \sigma_g \\
    && + (1-\eta_t) \frac{1}{n}\sum_{i=1}^n \frac{L_{h,i}}{2}\Exp{\sqnorm{x^{t} -x^{t+1}}}  + \eta_t  \frac{1}{n}\sum_{i=1}^n \frac{L_{h,i}}{2}\Exp{\sqnorm{y^{t+1} -x^{t+1}}} \\
    & \overset{(a)}{\leq}& (1-\eta_t) \Exp{\cU_t} + \eta_t \sigma_g \\
    && + (1-\eta_t) \frac{1}{n}\sum_{i=1}^n \frac{L_{h,i}}{2}\Exp{\sqnorm{x^{t} -x^{t+1}}}  + \eta_t \theta_t^2 \frac{1}{n}\sum_{i=1}^n \frac{L_{h,i}}{2}\Exp{\sqnorm{x^{t+1} -x^{t}}} \\
    & \overset{x^{t+1}}{\leq}& (1-\eta_t) \Exp{\cU_t} + \eta_t \sigma_g + (1-\eta_t) \gamma_t^2 \frac{\bar L_{h}}{2}  + \eta_t \theta_t^2 \gamma_t^2  \frac{\bar L_{h}}{2} \\
    & \leq &  (1-\eta_t) \Exp{\cU_t} + \eta_t \sigma_g +  \frac{1-\eta_t}{\eta_t}\gamma_t^2  \frac{\bar L_{h}}{2},
\end{eqnarray*}
where in $(a)$ we used the definition of $y^{t+1}$, in $(b)$ we denoted $\bar L_h = \frac{1}{n}\sum_{i=1}^n L_{h,i}$ and used the update rule for $x^{t+1}$.

\subsubsection{Error bound III} 

From the recursion of $v_i - \nabla f_i(x^t)$, and by the fact that $v^t = \frac{1}{n}\sum_{i=1}^n v_i^t$ and $\nabla f(x) = \frac{1}{n}\sum_{i=1}^n \nabla f_i(x)$,
\begin{eqnarray*}
    v^{t+1} - \nabla f(x^{t+1})
    & = & (1-\eta_t)(v^t - \nabla f(x^t)) \\
    && + (1-\eta_t)   Z_{f}(x^t,x^{t+1}) + \eta_t Z_{f}(y^{t+1},x^{t+1})\\
    && + \eta_t \frac{1}{n}\sum_{i=1}^n (\nabla f_i(y^{t+1};\xi_i^{t+1}) - \nabla f_i(y^{t+1})). 
\end{eqnarray*}

Next, from~\Cref{lemma:summing_the_recursion}, 
\begin{eqnarray*}
 e_{t+1} 
 & = & \prod_{\tau=0}^t (1-\eta_\tau) {e}_0 + \sum_{j=0}^t \left(\prod_{\tau=j+1}^t (1-\eta_\tau)\right) (1-\eta_j) {A}_{j+1} \\
 && + \sum_{j=0}^t \left(\prod_{\tau=j+1}^t (1-\eta_\tau)\right) \eta_j B_{j+1}  + \sum_{j=0}^t \left(\prod_{\tau=j+1}^t (1-\eta_\tau)\right) \eta_j C_{j+1},
\end{eqnarray*}
where we introduced the following notation: 
\begin{equation*}
     e_t \eqdef v^t - \nabla f(x^t), \quad  A_{t+1} \eqdef  Z_{f}(x^t,x^{t+1}),\quad B_{t+1} \eqdef  Z_{f}(y^{t+1},x^{t+1}), 
\end{equation*}
and $$C_{t+1} \eqdef \frac{1}{n}\sum_{i=1}^n [\nabla f_i(y^{t+1};\xi_i^{t+1}) - \nabla f_i(y^{t+1})].$$

Therefore, 
from the definition of the Euclidean norm and by the triangle inequality, 
\begin{eqnarray*}
\Exp{\norm{e_{t+1}}} 
    & \leq & \prod_{\tau=0}^t (1-\eta_\tau) \Exp{\norm{{e}_0}} + \sum_{j=0}^t \left(\prod_{\tau=j+1}^t (1-\eta_\tau)\right) (1-\eta_j) \Exp{\norm{{A}_{j+1}}} \\
    && + \sum_{j=0}^t \left(\prod_{\tau=j+1}^t (1-\eta_\tau)\right) \eta_j \Exp{\norm{B_{j+1}}}\\
    &&+ \Exp{\norm{\sum_{j=0}^t \left(\prod_{\tau=j+1}^t (1-\eta_\tau)\right) \eta_j C_{j+1}}}.
\end{eqnarray*}

Next, by $L_{h}$-Lipschitz continuity of $\nabla^2 f(\cdot)$,
\begin{eqnarray*}
    \norm{A_{j+1}} & \leq & \frac{L_{h}}{2}\sqnorm{x^j-x^{j+1}} \leq \frac{L_{h}}{2} \gamma_j^2, \quad \text{and} \\
    \norm{B_{j+1}} & \leq & \frac{L_{h}}{2}\sqnorm{y^{j+1}-x^{j+1}} \leq \frac{L_{h}}{2} \theta_j^2\gamma_j^2 = \frac{L_{h}}{2} \frac{(1-\eta_j)^2} {\eta_j^2} \gamma_j^2. 
\end{eqnarray*}
Therefore, 
\begin{eqnarray*}
    \Exp{\norm{e_{t+1}}} 
    & \leq & \prod_{\tau=0}^t (1-\eta_\tau) \Exp{\norm{{e}_0}} + \frac{L_h}{2}\sum_{j=0}^t \left(\prod_{\tau=j+1}^t (1-\eta_\tau)\right) (1-\eta_j) \gamma_j^2 \\
    && + \frac{L_h}{2}\sum_{j=0}^t \left(\prod_{\tau=j+1}^t (1-\eta_\tau)\right) \frac{(1-\eta_j)^2}{\eta_j} \gamma_j^2 \\
    && + \Exp{\norm{\sum_{j=0}^t \left(\prod_{\tau=j+1}^t (1-\eta_\tau)\right) \eta_j C_{j+1}}}.
\end{eqnarray*}

Next, since $1-\eta_j + \frac{(1-\eta_j)^2}{\eta_j} = \frac{1-\eta_j}{\eta_j} \leq \frac{1}{\eta_j}$, we obtain 
\begin{eqnarray*}
    \Exp{\norm{e_{t+1}}} 
    & \leq & \prod_{\tau=0}^t (1-\eta_\tau) \Exp{\norm{{e}_0}} + \frac{L_h}{2}\sum_{j=0}^t \left(\prod_{\tau=j+1}^t (1-\eta_\tau)\right) \frac{1}{\eta_j} \gamma_j^2 \\
    && + \Exp{\norm{\sum_{j=0}^t \left(\prod_{\tau=j+1}^t (1-\eta_\tau)\right) \eta_j C_{j+1}}}.
\end{eqnarray*}

Next, since 
\begin{eqnarray*}
    \Exp{\norm{\sum_{j=0}^t \left(\prod_{\tau=j+1}^t (1-\eta_\tau)\right) \eta_j C_{j+1}}} 
    & \overset{(a)}{\leq} & \sqrt{  \Exp{\sqnorm{\sum_{j=0}^t \left(\prod_{\tau=j+1}^t (1-\eta_\tau)\right) \eta_j C_{j+1}}} } \\
    & \overset{(b)}{\leq} &  \frac{\sigma_g}{\sqrt{n}} \sqrt{ \sum_{j=0}^t \left(\prod_{\tau=j+1}^t (1-\eta_\tau)^2\right) \eta_j^2 },
\end{eqnarray*}
where in $(a)$ we used Jensen's inequality, in $(b)$ we used \Cref{assum:stoc_g_h}, we get
\begin{eqnarray*}
    \Exp{\norm{e_{t+1}}} 
    & \leq & \prod_{\tau=0}^t (1-\eta_\tau) \Exp{\norm{{e}_0}} + \frac{L_h}{2}\sum_{j=0}^t \left(\prod_{\tau=j+1}^t (1-\eta_\tau)\right) \frac{1}{\eta_j} \gamma_j^2 \\
    && + \frac{\sigma_g}{\sqrt{n}} \sqrt{ \sum_{j=0}^t \left(\prod_{\tau=j+1}^t (1-\eta_\tau)\right) \eta_j^2 }.
\end{eqnarray*}

If $\eta_t = \left( \frac{2}{t+2} \right)^{4/7}$ and $\gamma_t = \gamma_0 \left( \frac{2}{t+2} \right)^{5/7}$, then we can prove  that $\eta_0=1$, and that
\begin{eqnarray*}
    \Exp{\norm{e_{t+1}}} 
    \leq \frac{L_h}{2}\sum_{j=0}^t \left(\prod_{\tau=j+1}^t (1-\eta_\tau)\right) \frac{1}{\eta_j} \gamma_j^2 + \frac{\sigma_g}{\sqrt{n}} \sqrt{ \sum_{j=0}^t \left(\prod_{\tau=j+1}^t (1-\eta_\tau)\right) \eta_j^2 }.
\end{eqnarray*}

In conclusion, from~\Cref{lemma:sum_prod}, 
\begin{eqnarray*}
    \Exp{\norm{e_{t+1}}} 
    \leq \frac{L_h}{2}C(\nicefrac{6}{7}, \nicefrac{4}{7}) \left( \frac{\gamma_{t+1}}{\eta_{t+1}}\right)^2  + \frac{\sigma_g}{\sqrt{n}} \sqrt{C(\nicefrac{8}{7},\nicefrac{4}{7}) \eta_{t+1} }.
\end{eqnarray*}

\subsubsection{Bounding Lyapunov function}

We define Lyapunov function as  $V_t = \Delta_{t} + C_{1,t} \cV_t + C_{2,t} \cU_t$ with $C_{1,t} = \frac{2\gamma_t}{1-\sqrt{1-\alpha}}$ and $C_{2,t} = \frac{2\gamma_t \sqrt{1-\alpha}}{1-\sqrt{1-\alpha}}$. Then, 
\begin{eqnarray*}
    \Exp{V_{t+1}} 
    & = & \Exp{\Delta_{t+1} + C_{1,t+1} \cV_t + C_{2,t+1} \cU_{t+1}} \\
    & \leq & \Exp{\Delta_t} - \gamma_t \Exp{\norm{\nabla f(x^t)}} + 2 \gamma_t \Exp{\norm{v^t- \nabla f(x^t)}} + 2\gamma_t \Exp{\cV_t} + \frac{\gamma_t^2L}{2} \\
    && + C_{1,t+1} \Exp{\cV_{t+1}} + C_{2,t+1} \Exp{\cU_{t+1}},
\end{eqnarray*}
where in the last inequality we used \eqref{eq:descent_lemma}.
Next, by the upper-bounds for $\Exp{\cV_{t+1}}$ and  $\Exp{\cU_{t+1}}$, 
\begin{eqnarray*}
    \Exp{ V_{t+1} }
   & \leq & \Exp{\Delta_t} - \gamma_t \Exp{\norm{\nabla f(x^t)}} + 2 \gamma_t \Exp{\norm{v^t- \nabla f(x^t)}} + 2\gamma_t \Exp{\cV_t} + \frac{\gamma_t^2L}{2} \\
    && + C_{1,t+1} \left(\sqrt{1-\alpha}    \Exp{\cV_{t}} + \sqrt{1-\alpha} \eta_t \Exp{\cU_t} + \sqrt{1-\alpha} \bar L \gamma_t  + \sqrt{1-\alpha} \sigma_g \eta_t\right) \\
    && + C_{2,t+1} \left( (1-\eta_t) \Exp{\cU_t} + \eta_t \sigma_g +  \frac{1-\eta_t}{\eta_t}\gamma_t^2  \frac{\bar L_{h}}{2} \right).
\end{eqnarray*}

Next,  since $\gamma_{t+1} \leq \gamma_t$, we can prove that $C_{1,t+1} \leq C_{1,t}$ and that $C_{2,t+1} \leq C_{2,t}$, and also that 
\begin{eqnarray*}
    2\gamma_t + C_{1,t+1}\sqrt{1-\alpha}  & \leq & 2\gamma_t + C_{1,t}\sqrt{1-\alpha} = C_{1,t} \\
    C_{1,t+1} \sqrt{1-\alpha}\eta_t + C_{2,t+1} (1-\eta_t) & \leq& C_{1,t} \sqrt{1-\alpha}\eta_t + C_{2,t} (1-\eta_t)= C_{2,t}
\end{eqnarray*}
Therefore, 
\begin{eqnarray*}
   \Exp{ V_{t+1} } 
   & \leq & \Exp{V_t} - \gamma_t \Exp{\norm{\nabla f(x^t)}} + 2 \gamma_t \Exp{\norm{v^t- \nabla f(x^t)}}  + \frac{\gamma_t^2L}{2} \\
    && + C_{1,t} (\sqrt{1-\alpha} \bar L \gamma_t  + \sqrt{1-\alpha} \sigma_g \eta_t)  + C_{2,t} \left( \eta_t \sigma_g +  \frac{1-\eta_t}{\eta_t}\gamma_t^2  \frac{\bar L_{h}}{2} \right) \\
     & \leq & \Exp{V_t} - \gamma_t \Exp{\norm{\nabla f(x^t)}} + 2 \gamma_t \Exp{\norm{v^t- \nabla f(x^t)}}   \\
    && + \gamma_t^2  \left( \frac{2\sqrt{1-\alpha}}{1-\sqrt{1-\alpha}} \bar L + \frac{L}{2} \right) + \gamma_t^3 \frac{1-\eta_t}{\eta_t} \frac{\sqrt{1-\alpha}}{1-\sqrt{1-\alpha}} \bar L_h  + \eta_t\gamma_t \frac{4\sqrt{1-\alpha}}{1-\sqrt{1-\alpha}} \sigma_g.
\end{eqnarray*}

Next, from the upper-bound of $\Exp{\norm{v^t- \nabla f(x^t)}}$, 
\begin{eqnarray*}
   \Exp{ V_{t+1} } 
   & \leq & \Exp{V_t} - \gamma_t \Exp{\norm{\nabla f(x^t)}} + 2 \gamma_t \left( \frac{L_h}{2}C(\nicefrac{6}{7}, \nicefrac{4}{7}) \left( \frac{\gamma_{t}}{\eta_{t}}\right)^2  + \frac{\sigma_g}{\sqrt{n}} \sqrt{C(\nicefrac{8}{7},\nicefrac{4}{7}) \eta_{t} }  \right)  \\
    && + \gamma_t^2  \left( \frac{2\sqrt{1-\alpha}}{1-\sqrt{1-\alpha}} \bar L + \frac{L}{2} \right) + \gamma_t^3 \frac{1-\eta_t}{\eta_t} \frac{\sqrt{1-\alpha}}{1-\sqrt{1-\alpha}} \bar L_h  + \eta_t\gamma_t \frac{4\sqrt{1-\alpha}}{1-\sqrt{1-\alpha}} \sigma_g \\
    & \leq & \Exp{V_t} - \gamma_t \Exp{\norm{\nabla f(x^t)}} + \frac{\gamma_t^3}{\eta_t^2} \cdot  L_h C(\nicefrac{6}{7},\nicefrac{4}{7}) + \gamma_t \sqrt{\eta_t} \cdot \frac{2 \sigma_g \sqrt{C(\nicefrac{8}{7}, \nicefrac{4}{7})}}{\sqrt{n}}  \\
    && + \gamma_t^2  \left( \frac{2\sqrt{1-\alpha}}{1-\sqrt{1-\alpha}} \bar L + \frac{L}{2} \right) +  \frac{\gamma_t^3}{\eta_t} \frac{\sqrt{1-\alpha}}{1-\sqrt{1-\alpha}} \bar L_h  + \eta_t\gamma_t \frac{4\sqrt{1-\alpha}}{1-\sqrt{1-\alpha}} \sigma_g.
\end{eqnarray*}

\subsubsection{Deriving the convergence rate}
By re-arranging the terms and by the telescopic series, 
\begin{eqnarray*}
    \frac{\sum_{t=0}^{T-1} \gamma_t\Exp{\norm{\nabla f(x^t)}}}{\sum_{t=0}^{T-1} \gamma_t}  
    & \leq & \frac{\Exp{V_0} -\Exp{V_T}}{\sum_{t=0}^{T-1}\gamma_t} +  \left( \frac{2\sqrt{1-\alpha}}{1-\sqrt{1-\alpha}} \bar L + \frac{L}{2} \right) \frac{\sum_{t=0}^{T-1} \gamma_t^2}{ \sum_{t=0}^{T-1} \gamma_t} \\
    && + L_h C(\nicefrac{6}{7},\nicefrac{4}{7}) \frac{\sum_{t=0}^{T-1} \gamma_t^3\eta_t^{-2}}{\sum_{t=0}^{T-1}\gamma_t} + \frac{2\sigma_g \sqrt{C(\nicefrac{8}{7}, \nicefrac{4}{7})}}{\sqrt{n}} \frac{\sum_{t=0}^{T-1} \gamma_t \sqrt{\eta_t}}{ \sum_{t=0}^{T-1} \gamma_t} \\
    &&  + \frac{\sqrt{1-\alpha}}{1-\sqrt{1-\alpha}} \bar L_h \frac{\sum_{t=0}^{T-1} \gamma_t^3 \eta_t^{-2}}{ \sum_{t=0}^{T-1} \gamma_t} +     \frac{4\sqrt{1-\alpha}}{1-\sqrt{1-\alpha}}  \sigma_g \frac{\sum_{t=0}^{T-1} \eta_t\gamma_t}{ \sum_{t=0}^{T-1}\gamma_t}\\
    &\overset{(a)}{\leq}& \frac{\Exp{V_0}}{\sum_{t=0}^{T-1}\gamma_t} + \left( \frac{L}{2} + \frac{\bar L }{\alpha^2}  \right) \frac{\sum_{t=0}^{T-1} \gamma_t^2}{ \sum_{t=0}^{T-1} \gamma_t} +  \frac{2\sigma_g \sqrt{C(\nicefrac{8}{7}, \nicefrac{4}{7})}}{\sqrt{n}} \frac{\sum_{t=0}^{T-1} \gamma_t \sqrt{\eta_t}}{ \sum_{t=0}^{T-1} \gamma_t}\\
    &&+\left( L_h C(\nicefrac{6}{7},\nicefrac{4}{7}) + \frac{\bar L_h}{\alpha^2} \right)\frac{\sum_{t=0}^{T-1} \gamma_t^3\eta_t^{-2}}{\sum_{t=0}^{T-1}\gamma_t}+  \frac{4\sigma_g}{\alpha^2} \frac{\sum_{t=0}^{T-1} \eta_t\gamma_t}{ \sum_{t=0}^{T-1}\gamma_t}\\
    &\overset{(b)}{\leq}& \frac{\Exp{V_0}}{\sum_{t=0}^{T-1}\gamma_t} + \left( \frac{L}{2} + \frac{\bar L }{\alpha^2}  \right) \frac{\sum_{t=0}^{T-1} \gamma_t^2}{ \sum_{t=0}^{T-1} \gamma_t} + \left( L_h C_2 + \frac{\bar L_h}{\alpha^2} \right)\frac{\sum_{t=0}^{T-1} \gamma_t^3\eta_t^{-2}}{\sum_{t=0}^{T-1}\gamma_t}\\
    && + \frac{2C_1\sigma_g }{\sqrt{n}} \frac{\sum_{t=0}^{T-1} \gamma_t \sqrt{\eta_t}}{ \sum_{t=0}^{T-1} \gamma_t} + \frac{4\sigma_g}{\alpha^2} \frac{\sum_{t=0}^{T-1} \eta_t\gamma_t}{ \sum_{t=0}^{T-1}\gamma_t},
\end{eqnarray*}
where in $(a)$ we used $\frac{\sqrt{1-\alpha}}{1-\sqrt{1-\alpha}}$, in $(b)$ we denoted $C_1 \eqdef \sqrt{C(\nicefrac{8}{7}, \nicefrac{4}{7})}$, $C_2 \eqdef C(\nicefrac{6}{7},\nicefrac{4}{7})$.
Since $\eta_t = \left( \frac{2}{t+2} \right)^{4/7}$ and $\gamma_t = \gamma_0 \left( \frac{2}{t+2} \right)^{5/7}$, for $T \geq 1$
\begin{eqnarray*}
    \sum_{t=0}^{T-1} \gamma_t &\geq&  T \gamma_{T-1} = \gamma_0T  \left(\frac{2}{T+1}\right)^{\nicefrac{5}{7}} \geq \gamma_0 T^{\nicefrac{2}{7}};\\
    \sum_{t=0}^{T-1} \gamma_t^3 \eta_t^{-2}  &=& \gamma_0^3\sum_{t=0}^{T-1}  \left( \frac{2}{t+2} \right)^{\nicefrac{15}{7}} \left( \frac{2}{t+2} \right)^{-\nicefrac{8}{7} } = 2\gamma_0^3 \sum_{t=0}^{T-1} \frac{1}{t+2}   = 2\gamma_0^3 \sum^{T}_{t=1}\frac{1}{1+t}\\
    &\leq& 2\gamma^3_0 \int^T_{1}\frac{1}{1+t}dt = 2\gamma^3_0 \left(\log \left(T+1\right) -\log(2)\right) \leq 2\gamma^3_0 \log \left(T+1\right); \\
    \sum_{t=0}^{T-1} \gamma_t \sqrt{\eta_t} &=& \gamma_0 \sum_{t=0}^{T-1} \left( \frac{2}{t+2} \right)^{\nicefrac{5}{7}} \left( \frac{2}{t+2} \right)^{\nicefrac{2}{7} } =   2\gamma_0 \sum_{t=0}^{T-1}\frac{1}{t+2} \leq 2\gamma_0 \log\left(T+1\right);\\
    \sum_{t=0}^{T-1} \gamma_t^2 &=& \gamma_0^2 \sum_{t=0}^{T-1} \left( \frac{2}{t+2} \right)^{\nicefrac{10}{7}}  \leq 2^{\nicefrac{10}{7}}\gamma_0^2 \sum_{t=0}^{T-1} \frac{1}{(t+2)^{\nicefrac{10}{7}}} = 2^{\nicefrac{10}{7}}\gamma_0^2 \sum_{t=1}^{T} \frac{1}{(1+t)^{\nicefrac{10}{7}}}\\
    &\leq& 2^{\nicefrac{10}{7}}\gamma_0^2 \int^T_{1}\frac{1}{(1+t)^{\nicefrac{10}{7}}}dt  = 2^{\nicefrac{10}{7}} \cdot \frac{7}{3}\gamma^2_0 \left(\frac{1}{2^{\nicefrac{3}{7}}} - \frac{1}{(T+1)^{\nicefrac{3}{7}}}\right) \leq 5\gamma^2_0;\\
    \sum_{t=0}^{T-1} \gamma_t \eta_t &=& \gamma_0 \sum_{t=0}^{T-1}  \left( \frac{2}{t+2} \right)^{\nicefrac{5}{7}} \left( \frac{2}{t+2} \right)^{\nicefrac{4}{7} } = 2^{\nicefrac{9}{7}}\gamma_0 \sum^{T-1}_{t=0} \frac{1}{(t+2)^{\nicefrac{9}{7}}} = 2^{\nicefrac{9}{7}}\gamma_0 \sum^{T}_{t=1} \frac{1}{(t+1)^{\nicefrac{9}{7}}}\\
    &\leq& 2^{\nicefrac{9}{7}}\gamma_0\int^T_{1}\frac{1}{(1+t)^{\nicefrac{9}{7}}}dt =  2^{\nicefrac{9}{7}} \cdot\frac{7}{2}\gamma_0\left(\frac{1}{2^{\nicefrac{2}{7}}} - \frac{1}{(T+1)^{\nicefrac{2}{7}}}\right) \leq 7\gamma_0.
\end{eqnarray*}

Therefore, denoting $\tilde{x}^T$ as a point randomly chosen from $\{x^0,x^1,\ldots,x^{T-1}\}$ with probability $\nicefrac{\gamma_t}{\sum_{t=0}^{T-1}\gamma_t}$ for $t=0,1,\ldots,T-1$, we obtain 
\begin{eqnarray*}
    \Exp{\norm{\nabla f(\tilde{x}^T)}}  
    & \leq & \frac{\Exp{V_0}}{\gamma_0 T^{\nicefrac{2}{7}}} + \left( \frac{L}{2} + \frac{\bar L }{\alpha^2}  \right) \frac{ 5\gamma^2_0}{\gamma_0 T^{\nicefrac{2}{7}}} + \left( L_h C_2 + \frac{\bar L_h}{\alpha^2} \right)\frac{2\gamma^3_0 \log \left(T+1\right)}{\gamma_0 T^{\nicefrac{2}{7}}}\\
    && + \frac{2C_1\sigma_g }{\sqrt{n}} \frac{2\gamma_0 \log \left(T+1\right)}{ \gamma_0 T^{\nicefrac{2}{7}}} + \frac{4\sigma_g}{\alpha^2} \frac{7\gamma_0}{ \gamma_0 T^{\nicefrac{2}{7}}}\\
    &=&\widetilde{\cO}\left(\frac{\nicefrac{V_0}{\gamma_0} + \sigma_g\left(\nicefrac{1}{\sqrt{n}} + \nicefrac{1}{\alpha^2}\right) + \gamma_0\left(L+\nicefrac{\bar L}{\alpha^2}\right) + \gamma_0^2\left(L_h +\nicefrac{\bar L_h}{\alpha^2}\right)}{T^{\nicefrac{2}{7}}}\right).
\end{eqnarray*}

\newpage 
\section{EF21-RHM}
\label{sec:appendix_EF21_RHM}
In \algname{EF21-RHM}, we update the iterates $\{x^t\}$ according to: 
\begin{eqnarray}
    x^{t+1} = x^t - \gamma_t \frac{g^t}{\norm{g^t}}, \notag
\end{eqnarray}
where $g^t = \frac{1}{n}\sum_{i=1}^n g_i^t$ and $v^t = \frac{1}{n}\sum_{i=1}^n v_i^t$ with $g_i^t$ and $v_i^t$ being governed by 
\begin{eqnarray}
    g_i^{t+1} & = & g_i^t + \mathcal{C}_i^{t+1}(v_i^{t+1} - g_i^t), \quad \text{and} \notag\\
    v_i^{t+1} & = & (1-\eta_t)(v_i^t + \nabla^2 f_i(\hat x^{t+1};\xi_i^{t+1}) (x^{t+1} - x^t) ) + \eta_t \nabla f_i(x^{t+1};\xi_i^{t+1}).\notag
\end{eqnarray}
Here, $\hat x^{t+1} = q_t x^{t+1} + (1-q_t) x^t$, where $q_t \sim \cU(0,1)$.

\subsection{Convergence Proof}

We prove the result in the following steps.

\subsubsection{Deriving the descent inequality} From \Cref{lem:descent_lemma}, we obtain \eqref{eq:descent_lemma}:
\begin{equation*}
    \Delta_{t+1} + \gamma_t \|\nabla f(x^t)\| \le \Delta_t  + 2 \gamma_t \norm{v^t- \nabla f(x^t)} +\frac{ 2\gamma_t}{n}\sum_{i=1}^n\norm{g_i^t-v_i^t} + \frac{\gamma_t^2L}{2}.
\end{equation*}

\subsubsection{Error bound I}

Define $\cV_t \eqdef \frac{1}{n}\sum_{i=1}^n \norm{g_i^t-v_i^t}$. From~\Cref{lemma:boundg_and_v}, 
\begin{eqnarray*}
   \Exp{\cV_{t+1}} \leq \sqrt{1-\alpha}    \Exp{\cV_{t}} + \frac{\sqrt{1-\alpha}}{n} \sum_{i=1}^n \Exp{\norm{v_i^{t+1}-v_i^t}}.
\end{eqnarray*}

To complete the upper-bound for $\Exp{\cV_{t+1}}$, we must upper-bound $\Exp{\norm{v_i^{t+1}-v_i^t}}$. 
From the definition of $v_i^{t+1}$, 
\begin{eqnarray*}
v_i^{t+1} - v_i^t
& = & (1-\eta_t)(v_i^t + \nabla^2 f_i(\hat{x}^{t+1}, \hat{\xi}_i^{t+1})(x^{t+1}-x^t))  + \eta_t \nabla f_i(x^{t+1}, \xi_i^{t+1}) - v_i^t \\
& = &  \eta_t (\nabla f_i(x^t) - v_i^t)  + \eta_t (\nabla f_i(x^{t+1}) - \nabla f_i(x^t)) \\
&& + (1-\eta_t)(\nabla^2 f_i(\hat{x}^{t+1}, \hat{\xi}_i^{t+1})(x^{t+1}-x^t) - \nabla^2 f_i(\hat{x}^{t+1})(x^{t+1}-x^t)) \\
&& + (1-\eta_t)(\nabla^2 f_i(\hat{x}^{t+1})(x^{t+1}-x^t))  + \eta_t (\nabla f_i(x^{t+1}, \xi_i^{t+1}) - \nabla f_i(x^{t+1})).
\end{eqnarray*}

Next, from the definition of the Euclidean norm,  by the triangle inequality, and by the fact that $f_i$ is $L_i$-smooth,
\begin{eqnarray*}
\|v_i^{t+1} - v_i^t\| 
&\le& \eta_t \|v_i^t - \nabla f_i(x^t)\| + \eta_t \|\nabla f_i(x^{t+1}) - \nabla f_i(x^t)\|  \\
&& + (1-\eta_t) \|\nabla^2 f_i(\hat{x}^{t+1}, \hat{\xi}_i^{t+1}) - \nabla^2 f_i(\hat{x}^{t+1})\|_{\text{op}} \|x^{t+1}-x^t\|  \\
&& + (1-\eta_t) \|\nabla^2 f_i(\hat{x}^{t+1})\|_{\text{op}} \|x^{t+1}-x^t\|  \\
&& + \eta_t \|\nabla f_i(x^{t+1}, \xi_i^{t+1}) - \nabla f_i(x^{t+1})\| \\
&\leq& \eta_t \|v_i^t - \nabla f_i(x^t)\| + \eta_t \gamma_t L_i  \\
&& + (1-\eta_t) \gamma_t \|\nabla^2 f_i(\hat{x}^{t+1}, \hat{\xi}_i^{t+1}) - \nabla^2 f_i(\hat{x}^{t+1})\|_{\text{op}}  \\
&& + (1-\eta_t) \gamma_t \|\nabla^2 f_i(\hat{x}^{t+1})\|_{\text{op}}   \\
&& + \eta_t \|\nabla f_i(x^{t+1}, \xi_i^{t+1}) - \nabla f_i(x^{t+1})\|,
\end{eqnarray*}
where in the last inequity we used the update rule for $x^{t+1}$.
By taking the expectation, and by the fact that $f_i$ is $L_i$-smooth, i.e. $\norm{\nabla^2 f_i(x)}_{\text{op}} \leq L_i$,
\begin{eqnarray*}
\Exp{\|v_i^{t+1} - v_i^t\|} 
&\le & \eta_t \|v_i^t - \nabla f_i(x^t)\| + \eta_t L_i \gamma_t + (1-\eta_t)L_i \gamma_t + \\
&& + (1-\eta_t)\gamma_t \sigma_h + \eta_t \sigma_g.
\end{eqnarray*}

Therefore, we obtain
\begin{eqnarray*}
    \Exp{\cV_{t+1}} &\leq& \sqrt{1-\alpha} \cV_t + \sqrt{1-\alpha} \eta_t \cU_t  + \sqrt{1-\alpha} \gamma_t \bar L + \sqrt{1-\alpha} (1-\eta_t) \gamma_t \sigma_h + \sqrt{1-\alpha} \eta_t \sigma_g.
\end{eqnarray*}

\subsubsection{Error bound II} 
Define $\cU_t \eqdef \frac{1}{n}\sum_{i=1}^n \norm{v_i^t - \nabla f_i(x^t)}$. Then, from the definition of  $v_i^{t+1}$,
    \begin{eqnarray*}
         \|v_i^{t+1} - \nabla f_i(x^{t+1})\| 
        &=& \|(1-\eta_t)(v_i^t - \nabla f_i(x^t)) + (1-\eta_t)\hat S_{i,t+1} + \eta_t e_{i,t+1}\| \\
        &\le& (1-\eta_t)\|v_i^t - \nabla f_i(x^t)\| + (1-\eta_t)\|\hat{S}_{i,t+1}\| + \eta_t \|e_{i,t+1}\|,
    \end{eqnarray*}
    where 
    \begin{eqnarray*}
        \hat{S}_{i,t+1} &=& \nabla^2 f_i(\hat{x}^{t+1}, \hat{\xi}_i^{t+1})(x^{t+1}-x^t) - \nabla f_i(x^{t+1}) + \nabla f_i(x^t), \quad \text{and}\\
        e_{i,t+1} &=& \nabla f_i(x^{t+1}, \xi_i^{t+1}) - \nabla f_i(x^{t+1}).
    \end{eqnarray*}

Therefore, from the definition of $\mathcal{U}_{t+1}$, 
\begin{eqnarray*}
 \Exp{\mathcal{U}_{t+1}} 
 & \leq & (1-\eta_t)  \mathbb{E}_{t}[\mathcal{U}_{t}] + \frac{1-\eta_t}{n} \sum_{i=1}^n \Exp{\norm{\hat S_{i,t+1}}}  + \eta_t \frac{1}{n}\sum_{i=1}^n \Exp{\norm{e_{i,t+1}}} \\
 &\leq&  (1-\eta_t)  \mathbb{E}_{t}[\mathcal{U}_{t}] + \frac{1-\eta_t}{n} \sum_{i=1}^n \sqrt{\Exp{\sqnorm{\hat S_{i,t+1}}}}  + \eta_t \frac{1}{n}\sum_{i=1}^n \sqrt{\Exp{\sqnorm{e_{i,t+1}}}},
\end{eqnarray*}
where in the last inequality we applied Jensen's inequality. 
Next, from~\Cref{assum:stoc_g_h}, 
\begin{eqnarray*}
 \Exp{\mathcal{U}_{t+1}} 
\leq  (1-\eta_t)  \mathbb{E}_{t}[\mathcal{U}_{t}] + \frac{1-\eta_t}{n} \sum_{i=1}^n \sqrt{\Exp{\sqnorm{\hat S_{i,t+1}}}}  + \eta_t \sigma_g.
\end{eqnarray*}

Next, we bound $\Exp{\sqnorm{\hat S_{i,t+1}}}$. By~\Cref{assum:stoc_g_h}, 
\begin{eqnarray*}
    \Exp{\norm{\hat{S}_{i,t+1}}^2}
    & = & \Exp{\left\| (\nabla^2 f_i(\hat{x}^{t+1}, \hat{\xi}_i^{t+1}) - \nabla^2 f_i(\hat{x}^{t+1})) (x^{t+1}-x^t) \right\|^2}  \\
    && + \Exp{\left\| \nabla^2 f_i(\hat{x}^{t+1}) (x^{t+1}-x^t) - (\nabla f_i(x^{t+1}) - \nabla f(x^t)) \right\|^2} \\
    & \leq & \Exp{\left\| \nabla^2 f_i(\hat{x}^{t+1}, \hat{\xi}_i^{t+1}) - \nabla^2 f_i(\hat{x}^{t+1}) \right\|_{\rm op}^2 \left\| x^{t+1}-x^t \right\|^2}  \\
    && + 2\Exp{\left\| \nabla^2 f_i(\hat{x}^{t+1}) \right\|^2_{\rm op} \left\|x^{t+1}-x^t \right\|^2} + 2 \Exp{ \left\| \nabla f_i(x^{t+1}) - \nabla f_i(x^t)\right\|^2 } \\
    & \leq &  \sigma_g^2 \Exp{ \left\| x^{t+1}-x^t \right\|^2}  + 2\Exp{\left\| \nabla^2 f_i(\hat{x}^{t+1}) \right\|^2_{\rm op} \left\|x^{t+1}-x^t \right\|^2} \\
    && + 2 \Exp{ \left\| \nabla f_i(x^{t+1}) - \nabla f_i(x^t)\right\|^2 }.
\end{eqnarray*}
By the fact $f_i$ is $L_i$-smooth, and by the definition of $x^{t+1}$, 
    \begin{eqnarray*}
    \Exp{\norm{\hat{S}_{i,t+1}}^2} \leq  \gamma_t^2 \left( \sigma_{h}^2 + 4 L^2_i \right) .
    \end{eqnarray*}
Next, plugging the upper-bound of $\ExpSub{t}{\norm{\hat{S}_{i,t+1}}^2}$ into the upper-bound of $  \mathbb{E}_{t}[\mathcal{U}_{t+1}]$, we obtain
\begin{eqnarray*}
        \mathbb{E}_{t}[\mathcal{U}_{t+1}]
     & \leq & (1-\eta_t)  \mathcal{U}_{t} + \frac{1-\eta_t}{n} \sum_{i=1}^n \sqrt{ \gamma_t^2 \left( \sigma_{h}^2 + 4 L^2_i \right)  } + \eta_t \sigma_g \\
     & \leq & (1-\eta_t)  \mathcal{U}_{t} + (1-\eta_t)\gamma_t (\sigma_h + 2\bar L) + \eta_t \sigma_g,
    \end{eqnarray*}
where in the last inequality we used $\sqrt{a+b} \leq \sqrt{a} + \sqrt{b}$ for $a,b  \geq 0$.

\subsubsection{Error bound III} 

Next, we bound $\norm{v^{t+1} - \nabla f(x^{t+1})}$: 
\begin{eqnarray*}
\hat{e}_{t+1} 
&=& (1-\eta_t)\hat{e}_t + (1-\eta_t)\hat{S}_{t+1} + \eta_t e_{t+1} \\
&=& \prod_{\tau=0}^t (1-\eta_\tau) \hat{e}_0 + \sum_{j=0}^t \left(\prod_{\tau=j+1}^t (1-\eta_\tau)\right) \hat{S}_{j+1} + \sum_{j=0}^t \left(\prod_{\tau=j}^t (1-\eta_\tau)\right) \eta_j e_{j+1},
\end{eqnarray*}
where $\hat S_{t+1} = \frac{1}{n}\sum_{i=1}^n \left(\nabla^2 f_i(\hat{x}^{t+1}, \hat{\xi}_i^{t+1})(x^{t+1}-x^t) - (\nabla f_i(x^{t+1}) - \nabla f_i(x^t))\right)$, $e_{t+1} = \frac{1}{n}\sum_{i=1}^n e_{i,t+1}$, and $e_{i,t+1} = \nabla f_i(x^{t+1}, \xi_i^{t+1}) - \nabla f_i(x^{t+1})$. 

If $\eta_0=1$, then by taking the Euclidean norm and the expectation, 
\begin{eqnarray*}
\Exp{\norm{\hat{e}_{t+1}}} 
& \overset{(a)}{\leq} &  \Exp{\norm{ \sum_{j=0}^t \left(\prod_{\tau=j}^t (1-\eta_\tau)\right)  \hat{S}_{j+1}}} + \Exp{ \norm{ \sum_{j=0}^t \left(\prod_{\tau=j}^t (1-\eta_\tau)\right) \eta_j e_{j+1}} }\\
& \overset{(b)}{\leq}&  \left(\mathbb{E}\left\|\sum_{j=0}^t \prod_{\tau=j}^t (1-\eta_\tau) \hat{S}_{j+1}\right\|^2\right)^{1/2} + \left(\mathbb{E}\left\|\sum_{j=0}^t \prod_{\tau=j+1}^t (1-\eta_\tau) \eta_j e_{j+1}\right\|^2\right)^{1/2},
\end{eqnarray*}
where in $(a)$ we used triangle inequality, in $(b)$ we applied Jensen's inequality.
From~\Cref{assum:stoc_g_h}, we can prove that $\Exp{e_l} = 0$, $\Exp{\sqnorm{e_l}} = \sigma_g^2/n$, and $\Exp{\inp{e_l}{e_i}}=0$ for $l\neq j$. Thus, 
\begin{eqnarray*}
\Exp{\norm{\hat{e}_{t+1}}} 
\leq  \left(\mathbb{E}\left\|\sum_{j=0}^t \prod_{\tau=j}^t (1-\eta_\tau) \hat{S}_{j+1}\right\|^2\right)^{1/2} + \frac{\sigma_g}{\sqrt{n}} \left( \sum_{j=0}^{t} \prod_{\tau=j+1}^{t} (1-\eta_\tau)^2 \eta_j^2 \right)^{1/2}.
\end{eqnarray*}

Next, from~\Cref{assum:stoc_g_h}, we can show that $\Exp{\inp{\hat S_l}{\hat S_j}}=0$ for $l\neq j$, and that 
\begin{eqnarray*}
\Exp{\norm{\hat{e}_{t+1}}} 
\leq  \left( \sum_{j=0}^t \prod_{\tau=j}^t (1-\eta_\tau)^2 \mathbb{E}\left\| \hat{S}_{j+1}\right\|^2\right)^{1/2} + \frac{\sigma_g}{\sqrt{n}} \left( \sum_{j=0}^{t} \prod_{\tau=j+1}^{t} (1-\eta_\tau)^2 \eta_j^2 \right)^{1/2}.
\end{eqnarray*}
Since 
\begin{eqnarray*}
\mathbb{E}[\|\hat{S}_{j+1}\|^2] &=& \mathbb{E}\left[\left\|\frac{1}{n}\sum_{i=1}^n \left(\nabla^2 f_i(\hat{x}^{j+1}, \hat{\xi}_i^{j+1})(x^{j+1}-x^j) - (\nabla f_i(x^{j+1}) - \nabla f_i(x^j))\right)\right\|^2\right] \\
&\overset{(a)}{\leq}& \frac{1}{n^2}\sum_{i=1}^n \left( \Exp{\left\| (\nabla^2 f_i(\hat{x}^{j+1}, \hat{\xi}_i^{j+1}) - \nabla^2 f_i(\hat{x}^{j+1})) (x^{j+1}-x^j) \right\|^2}\right)  \\
&&\quad + \Exp{\left\| \nabla^2 f(\hat{x}^{j+1}) (x^{j+1}-x^j) - (\nabla f(x^{j+1}) - \nabla f(x^j)) \right\|^2}  \\
&\le& \frac{1}{n^2}\sum_{i=1}^n \left( \mathbb{E}\left\| (\nabla^2 f_i(\hat{x}^{j+1}, \hat{\xi}_i^{j+1}) - \nabla^2 f_i(x^{j+1})) (x^{j+1}-x^j) \right\|^2 \right) \\
&&\quad +  2\mathbb{E}\left\| \nabla^2 f(\hat{x}^{j+1})\right\|^2_{\text{op}} \left\|x^{j+1}-x^j \right\|^2 + 2\mathbb{E}\left\| \nabla f(x^{j+1}) - \nabla f(x^j) \right\|^2  \\
&\overset{(b)}{\leq}& \gamma_j^2 \left(\frac{1}{n^2}\sum_{i=1}^n \sigma_{h}^2 + 4L^2 \right) \\
&\le&  4\left(\frac{\sigma_h^2}{n} + L^2\right) \gamma_j^2 ,
\end{eqnarray*}
where in $(a)$ we applied variance-bias decomposition, in $(b)$ we used smoothness of $f$ and \Cref{assum:stoc_g_h}, 
we obtain:
\begin{eqnarray*}
\Exp{\norm{\hat{e}_{t+1}}}  
 \leq    4 \left(\sum_{j=1}^t \left(\prod_{\tau=j}^t (1-\eta_\tau)^2\right) \gamma_j^2\right)^{\nicefrac{1}{2}} \left(\frac{\sigma_h}{\sqrt{n}} + L\right)   + \frac{\sigma_g}{\sqrt{n}} \left(\sum_{j=0}^t \left(\prod_{\tau=j+1}^t (1-\eta_\tau)^2\right) \eta_j^2\right)^{\nicefrac{1}{2}}.
\end{eqnarray*}

If $\eta_t = \left( \frac{2}{t+2} \right)^{2/3}$ and $\gamma_t = \gamma_0 \left( \frac{2}{t+2} \right)^{2/3}$, then we can prove  that $\eta_0=1$, and from~\Cref{lemma:sum_prod} that
\begin{eqnarray*}
\Exp{\norm{\hat{e}_{t+1}}}  
& \leq &   4  \left(\frac{\sigma_h}{\sqrt{n}} + L\right)  \sqrt{C(\nicefrac{4}{3},\nicefrac{2}{3}) \frac{\gamma_{t+1}^2}{\eta_{t+1}}}  + \frac{\sigma_g}{\sqrt{n}} \sqrt{C(\nicefrac{4}{3},\nicefrac{2}{3}) \eta_{t+1}}.
\end{eqnarray*}

\subsubsection{Bounding Lyapunov function}

Define $V_t = \Delta_{t} + C_{1,t} \cV_t + C_{2,t} \cU_t$ with $C_{1,t} = \frac{2\gamma_t}{1-\sqrt{1-\alpha}}$ and $C_{2,t} = \frac{2\gamma_t \sqrt{1-\alpha}}{1-\sqrt{1-\alpha}}$. Then, 
\begin{eqnarray*}
    \Exp{V_{t+1}} 
    & = & \Exp{\Delta_{t+1} + C_{1,t+1} \cV_t + C_{2,t+1} \cU_{t+1}} \\
    &\leq& \Exp{\Delta_t} - \gamma_t \Exp{\norm{\nabla f(x^t)}} + 2 \gamma_t \Exp{\norm{v^t- \nabla f(x^t)}} + 2\gamma_t \Exp{\cV_t} + \frac{\gamma_t^2L}{2} \\
    && + C_{1,t+1} \Exp{\cV_{t+1}} + C_{2,t+1} \Exp{\cU_{t+1}},
\end{eqnarray*}
where in the last inequality we used \eqref{eq:descent_lemma}. 
Next, by the upper-bounds for $\Exp{\cV_{t+1}}$ and  $\Exp{\cU_{t+1}}$, 
\begin{eqnarray*}
    \Exp{ V_{t+1} }
   & \leq & \Exp{\Delta_t} - \gamma_t \Exp{\norm{\nabla f(x^t)}} + 2 \gamma_t \Exp{\norm{v^t- \nabla f(x^t)}} + 2\gamma_t \Exp{\cV_t} + \frac{\gamma_t^2L}{2} \\
    && + C_{1,t+1} (\sqrt{1-\alpha} \Exp{\cV_t} + \sqrt{1-\alpha} \eta_t \Exp{\cU_t} ) \\
    && + C_{1,t+1} (\sqrt{1-\alpha} \gamma_t \bar L + \sqrt{1-\alpha} (1-\eta_t) \gamma_t \sigma_h + \sqrt{1-\alpha} \eta_t \sigma_g) \\
    && + C_{2,t+1} ( (1-\eta_t)  \Exp{\mathcal{U}_{t}} + (1-\eta_t)\gamma_t (\sigma_h + 2\bar L) + \eta_t \sigma_g ).
\end{eqnarray*}

Next,  since $\gamma_{t+1} \leq \gamma_t$, we can prove that $C_{1,t+1} \leq C_{1,t}$ and that $C_{2,t+1} \leq C_{2,t}$, and also that 
\begin{eqnarray*}
    2\gamma_t + C_{1,t+1}\sqrt{1-\alpha}  & \leq & 2\gamma_t + C_{1,t}\sqrt{1-\alpha} = C_{1,t} \\
    C_{1,t+1} \sqrt{1-\alpha}\eta_t + C_{2,t+1} (1-\eta_t) & \leq& C_{1,t} \sqrt{1-\alpha}\eta_t + C_{2,t} (1-\eta_t)= C_{2,t}
\end{eqnarray*}
Therefore, 
\begin{eqnarray*}
   \Exp{ V_{t+1} } 
   & \leq & \Exp{V_t} - \gamma_t \Exp{\norm{\nabla f(x^t)}} + 2 \gamma_t \Exp{\norm{v^t- \nabla f(x^t)}}  + \frac{\gamma_t^2L}{2} \\
    && + C_{1,t} (\sqrt{1-\alpha} \gamma_t \bar L + \sqrt{1-\alpha} (1-\eta_t) \gamma_t \sigma_h + \sqrt{1-\alpha} \eta_t \sigma_g) \\
    && + C_{2,t} (  (1-\eta_t)\gamma_t (\sigma_h + 2\bar L) + \eta_t \sigma_g ) \\
     & \leq & \Exp{V_t} - \gamma_t \Exp{\norm{\nabla f(x^t)}} + 2 \gamma_t \Exp{\norm{v^t- \nabla f(x^t)}}   \\
    && + \gamma_t^2  \left( \frac{2\sqrt{1-\alpha}}{1-\sqrt{1-\alpha}} (2\sigma_h + 3 \bar L))  + \frac{L}{2}\right) + \eta_t\gamma_t \frac{4\sqrt{1-\alpha}}{1-\sqrt{1-\alpha}} \sigma_g.
\end{eqnarray*}

Next, from the upper-bound of $\Exp{\norm{v^t- \nabla f(x^t)}}$, 
\begin{eqnarray*}
   \Exp{ V_{t+1} } 
   & \leq & \Exp{V_t} - \gamma_t \Exp{\norm{\nabla f(x^t)}} + 2 \gamma_t \left(   4  \left(\frac{\sigma_h}{\sqrt{n}} + L\right)  \sqrt{C(\nicefrac{4}{3},\nicefrac{2}{3}) \frac{\gamma_{t}^2}{\eta_{t}}}  + \frac{\sigma_g}{\sqrt{n}} \sqrt{C(\nicefrac{4}{3},\nicefrac{2}{3}) \eta_{t}}  \right)  \\
    && + \gamma_t^2  \left( \frac{2\sqrt{1-\alpha}}{1-\sqrt{1-\alpha}} (2\sigma_h + 3 \bar L))  + \frac{L}{2}\right) + \eta_t\gamma_t \frac{4\sqrt{1-\alpha}}{1-\sqrt{1-\alpha}} \sigma_g \\
    & \leq & \Exp{V_t} - \gamma_t \Exp{\norm{\nabla f(x^t)}} + \frac{\gamma_t^2}{\sqrt{\eta_t}} \cdot  8 (\sigma_h/\sqrt{n} + L) \sqrt{  C(\nicefrac{4}{3},\nicefrac{2}{3}) } +  \gamma_t\sqrt{\eta_t} \cdot \frac{2\sigma_g \sqrt{C(\nicefrac{4}{3}, \nicefrac{2}{3})}}{\sqrt{n}}  \\
    && + \gamma_t^2  \left( \frac{2\sqrt{1-\alpha}}{1-\sqrt{1-\alpha}} (2\sigma_h + 3 \bar L))  + \frac{L}{2}\right) + \eta_t\gamma_t \frac{4\sqrt{1-\alpha}}{1-\sqrt{1-\alpha}} \sigma_g.
\end{eqnarray*}

\subsubsection{Deriving the convergence rate}
By re-arranging the terms and by the telescopic series, 
\begin{eqnarray*}
    \frac{\sum_{t=0}^{T-1} \gamma_t \Exp{\norm{\nabla f(x^t)}}}{\sum_{t=0}^{T-1} \gamma_t}  
    & \leq & \frac{\Exp{V_0} -\Exp{V_T}}{\sum_{t=0}^{T-1}\gamma_t}   + 8 \left(\frac{\sigma_h}{\sqrt{n}} + L\right) \sqrt{  C(\nicefrac{4}{3},\nicefrac{2}{3}) } \frac{\sum_{t=0}^{T-1} \nicefrac{\gamma_t^2}{\sqrt{\eta_t}}}{\sum_{t=0}^{T-1}\gamma_t} + \\
    && + \frac{2\sigma_g \sqrt{C(\nicefrac{4}{3}, \nicefrac{2}{3})}}{\sqrt{n}} \frac{\sum_{t=0}^{T-1} \gamma_t \sqrt{\eta_t}}{ \sum_{t=0}^{T-1} \gamma_t}  +     \frac{4\sqrt{1-\alpha}}{1-\sqrt{1-\alpha}}  \sigma_g \frac{\sum_{t=0}^{T-1} \eta_t\gamma_t}{ \sum_{t=0}^{T-1}\gamma_t}\\
    && +  \left( \frac{2\sqrt{1-\alpha}}{1-\sqrt{1-\alpha}} (2\sigma_h + 3 \bar L))  + \frac{L}{2} \right) \frac{\sum_{t=0}^{T-1} \gamma_t^2}{ \sum_{t=0}^{T-1} \gamma_t}  \\
    &\overset{(a)}{\leq}& \frac{\Exp{V_0}}{\sum_{t=0}^{T-1}\gamma_t} + \frac{2\sigma_g \sqrt{C(\nicefrac{4}{3}, \nicefrac{2}{3})}}{\sqrt{n}} \frac{\sum_{t=0}^{T-1} \gamma_t \sqrt{\eta_t}}{ \sum_{t=0}^{T-1} \gamma_t}   +   \frac{4\sigma_g}{\alpha^2} \frac{\sum_{t=0}^{T-1} \eta_t\gamma_t}{ \sum_{t=0}^{T-1}\gamma_t}\\
    && + 8 \left(\frac{\sigma_h}{\sqrt{n}} + L\right) \sqrt{  C(\nicefrac{4}{3},\nicefrac{2}{3}) } \frac{\sum_{t=0}^{T-1} \nicefrac{\gamma_t^2}{\sqrt{\eta_t}}}{\sum_{t=0}^{T-1}\gamma_t} \\
    &&+  \left( \frac{L}{2}+  \frac{2(2\sigma_h + 3 \bar L)}{\alpha^2}   \right) \frac{\sum_{t=0}^{T-1} \gamma_t^2}{ \sum_{t=0}^{T-1} \gamma_t}\\
    &\overset{(b)}{\leq}& \frac{\Exp{V_0}}{\sum_{t=0}^{T-1}\gamma_t} + \frac{2 C_1 \sigma_g}{\sqrt{n}} \frac{\sum_{t=0}^{T-1} \gamma_t \sqrt{\eta_t}}{ \sum_{t=0}^{T-1} \gamma_t}   +   \frac{4\sigma_g}{\alpha^2} \frac{\sum_{t=0}^{T-1} \eta_t\gamma_t}{ \sum_{t=0}^{T-1}\gamma_t}\\
    && + 8 C_1 \left(\frac{\sigma_h}{\sqrt{n}} + L\right) \frac{\sum_{t=0}^{T-1} \gamma_t^2{\eta_t}^{-\nicefrac{1}{2}}}{\sum_{t=0}^{T-1}\gamma_t}\\
    &&+  \left( \frac{L}{2}+  \frac{4\sigma_h + 6 \bar L}{\alpha^2}   \right) \frac{\sum_{t=0}^{T-1} \gamma_t^2}{ \sum_{t=0}^{T-1} \gamma_t},
\end{eqnarray*}
where in $(a)$ we used $\frac{\sqrt{1-\alpha}}{1-\sqrt{1-\alpha}} \leq \frac{1}{\alpha^2}$ for any $\alpha \in (0,1]$, in we denote $C_1 = \sqrt{C(\nicefrac{4}{3}, \nicefrac{2}{3})}$.
Since $\eta_t = \left( \frac{2}{t+2} \right)^{\nicefrac{2}{3}}$ and $\gamma_t = \gamma_0 \left( \frac{2}{t+2} \right)^{\nicefrac{2}{3}}$, 
we obtain

\begin{eqnarray*}
    \sum_{t=0}^{T-1} \gamma_t &\geq&  T \gamma_{T-1} = \gamma_0T  \left(\frac{2}{T+1}\right)^{\nicefrac{2}{3}} \geq \gamma_0 T^{\nicefrac{1}{3}};\\
    \sum_{t=0}^{T-1} \gamma_t^2 \eta_t^{-\nicefrac{1}{2}}  &=& \gamma_0^2\sum_{t=0}^{T-1}  \left( \frac{2}{t+2} \right)^{\nicefrac{4}{3}} \left( \frac{2}{t+2} \right)^{-\nicefrac{1}{3} } = 2\gamma_0^2 \sum_{t=0}^{T-1} \frac{1}{t+2}   = 2\gamma_0^2 \sum^{T}_{t=1}\frac{1}{1+t}\\
    &\leq& 2\gamma^2_0 \int^T_{1}\frac{1}{1+t}dt = 2\gamma^2_0 \left(\log \left(T+1\right) -\log(2)\right) \leq 2\gamma^2_0 \log \left(T+1\right); \\
    \sum_{t=0}^{T-1} \gamma_t \sqrt{\eta_t} &=& \gamma_0 \sum_{t=0}^{T-1} \left( \frac{2}{t+2} \right)^{\nicefrac{2}{3}} \left( \frac{2}{t+2} \right)^{\nicefrac{1}{3} } =   2\gamma_0 \sum_{t=0}^{T-1}\frac{1}{t+2} \leq 2\gamma_0 \log\left(T+1\right);\\
    \sum_{t=0}^{T-1} \gamma_t^2 &=& \gamma_0^2 \sum_{t=0}^{T-1} \left( \frac{2}{t+2} \right)^{\nicefrac{4}{3}}  = 2^{\nicefrac{4}{3}}\gamma_0^2 \sum_{t=0}^{T-1} \frac{1}{(t+2)^{\nicefrac{4}{3}}} = 2^{\nicefrac{3}{2}}\gamma_0^2 \sum_{t=1}^{T} \frac{1}{(1+t)^{\nicefrac{4}{3}}}\\
    &\leq& 2^{\nicefrac{4}{3}}\gamma_0^2 \int^T_{1}\frac{1}{(1+t)^{\nicefrac{4}{3}}}dt  = 2^{\nicefrac{4}{3}}\cdot 3\gamma^2_0 \left(\frac{1}{\sqrt[3]{2}} - \frac{1}{\sqrt[3]{T+1}}\right) \leq 6\gamma^2_0;\\
    \sum_{t=0}^{T-1} \gamma_t \eta_t &=& \frac{1}{\gamma_0}\sum_{t=0}^{T-1} \gamma_t^2 \leq 6\gamma_0.
\end{eqnarray*}

Therefore, denoting $\tilde{x}^T$ as a point randomly chosen from $\{x^0,x^1,\ldots,x^{T-1}\}$ with probability $\nicefrac{\gamma_t}{\sum_{t=0}^{T-1}\gamma_t}$ for $t=0,1,\ldots,T-1$, we obtain 
\begin{eqnarray*}
    \Exp{\norm{\nabla f(\tilde{x}^T)}} 
    & \leq & \frac{\Exp{V_0}}{\gamma_0 T^{\nicefrac{1}{3}}} + \frac{2 C_1 \sigma_g}{\sqrt{n}} \frac{2\gamma_0 \log \left(T+1\right)}{ \gamma_0 T^{\nicefrac{1}{3}}}   +   \frac{4\sigma_g}{\alpha^2} \frac{6\gamma_0}{ \gamma_0 T^{\nicefrac{1}{3}}}\\
    && + 8 C_1 \left(\frac{\sigma_h}{\sqrt{n}} + L\right) \frac{2\gamma^2_0 \log \left(T+1\right)}{\gamma_0 T^{\nicefrac{1}{3}}} 
    +  \left( \frac{L}{2}+  \frac{4\sigma_h + 6 \bar L}{\alpha^2}   \right) \frac{6\gamma_0^2}{ \gamma_0 T^{\nicefrac{1}{3}}}\\
    &=& \frac{\Exp{V_0}}{\gamma_0 T^{\nicefrac{1}{3}}} + \frac{4 C_1 \sigma_g}{\sqrt{n}} \frac{ \log \left(T+1\right)}{  T^{\nicefrac{1}{3}}}   +   \frac{24\sigma_g}{\alpha^2 T^{\nicefrac{1}{3}}}\\
    && + 16C_1 \gamma_0\left(\frac{\sigma_h}{\sqrt{n}} + L\right) \frac{ \log \left(T+1\right)}{ T^{\nicefrac{1}{3}}} 
    + 3\gamma_0 \left( L+  \frac{8\sigma_h}{\alpha^2} + \frac{12 \bar L }{\alpha^2}  \right) \frac{1}{ T^{\nicefrac{1}{3}}}\\
    &=& \widetilde{\cO}\left(\frac{\nicefrac{\Exp{V_0}}{\gamma_0}+ \sigma_g\left(\nicefrac{1}{\sqrt{n}}+\nicefrac{1}{\alpha^2}\right) + \gamma_0\sigma_h\left(\nicefrac{1}{\sqrt{n}}+\nicefrac{1}{\alpha^2}\right) + \gamma_0\left( L +\nicefrac{\bar L}{\alpha^2}\right)}{T^{\nicefrac{1}{3}}}\right).
\end{eqnarray*}

\newpage 
\section{EF21-HM}

In \algname{EF21-HM}, we update the iterates $\{x^t\}$ according to: 
\begin{eqnarray}
    x^{t+1} = x^t - \gamma_t \frac{g^t}{\norm{g^t}}, \notag
\end{eqnarray}
where $g^t = \frac{1}{n}\sum_{i=1}^n g_i^t$ and $v^t = \frac{1}{n}\sum_{i=1}^n v_i^t$ with $g_i^t,v_i^t$ being governed by 
\begin{eqnarray}
    g_i^{t+1} & = & g_i^t + \mathcal{C}_i^{t+1}(v_i^{t+1} - g_i^t), \quad \text{and} \notag\\
    v_i^{t+1} & = & (1-\eta_t)(v_i^t + \nabla^2 f_i(x^{t+1};\xi_i^{t+1}) (x^{t+1} - x^t) ) + \eta_t \nabla f_i(x^{t+1};\xi_i^{t+1}). \notag
\end{eqnarray}

\subsection{Convergence Proof}

We prove the result in the following steps.

\subsubsection{Deriving the descent inequality} From \Cref{lem:descent_lemma}, we obtain \eqref{eq:descent_lemma}:
\begin{equation*}
    \Delta_{t+1} + \gamma_t \|\nabla f(x^t)\| \le \Delta_t  + 2 \gamma_t \norm{v^t- \nabla f(x^t)} +\frac{ 2\gamma_t}{n}\sum_{i=1}^n\norm{g_i^t-v_i^t} + \frac{\gamma_t^2L}{2}.
\end{equation*}

\subsubsection{Error bound I}
Define $\cV_t \eqdef \frac{1}{n}\sum_{i=1}^n \norm{g_i^t-v_i^t}$. From~\Cref{lemma:boundg_and_v}, we have
\begin{eqnarray*}
   \Exp{\cV_{t+1}} &\leq& \sqrt{1-\alpha}    \Exp{\cV_{t}} + \frac{\sqrt{1-\alpha}}{n} \sum_{i=1}^n \Exp{\norm{v_i^{t+1}-v_i^t}}.
\end{eqnarray*}

To complete the upper-bound for $\Exp{\cV_{t+1}}$, we must upper-bound $\Exp{\norm{v_i^{t+1}-v_i^t}}$. 
Since
\begin{eqnarray*}
    v_i^{t+1}-v_i^t & = & - \eta_t (v_i^t - \nabla f_i(x^t)) + (1-\eta_t) ( \nabla^2 f_i(x^{t+1};\xi_i^{t+1})  - \nabla^2 f_i(x^{t+1}))(x^{t+1} - x^t) \\
    && -\eta_t\left[ \nabla f_i(x^t) - \nabla f_i(x^{t+1}) - \nabla^2 f_i(x^{t+1})(x^t -x^{t+1}) \right] \\
    && +  \nabla^2 f_i(x^{t+1})(x^{t+1}-x^t) + \eta_t \left(\nabla f_i(x^{t+1};\xi_i^{t+1}) - \nabla f_i(x^{t+1})\right)\\
    &=&- \eta_t (v_i^t - \nabla f_i(x^t)) + (1-\eta_t) S_{i,t+1} +   \nabla^2 f_i(x^{t+1})(x^{t+1}-x^t) \\
    && - \eta_t Z_{f_i}(x^{t}, x^{t+1}) + \eta_t e_{i,t+1},
\end{eqnarray*}
where  we denoted $S_{i,t+1} \eqdef ( \nabla^2 f_i(x^{t+1};\xi_i^{t+1})  - \nabla^2 f_i(x^{t+1}))(x^{t+1} - x^t)$, and  $Z_{f_i}(x^{t}, x^{t+1}) \eqdef \nabla f_i(x^t) - \nabla f_i(x^{t+1}) - \nabla^2 f_i(x^{t+1})(x^t -x^{t+1})$, and $e_{i,t+1} \eqdef \nabla f_i(x^{t+1};\xi_i^{t+1}) - \nabla f_i(x^{t+1})$, we obtain 
\begin{eqnarray*}
    \Exp{\norm{v_i^{t+1}-v_i^t}} &\overset{(a)}{\leq}& \eta_t \Exp{\norm{v_i^t - \nabla f_i(x^t)}} + (1-\eta_t) \Exp{\norm{S_{i,t+1}}} \\  
    && + \eta_t \Exp{\norm{Z_{f_i}(x^{t}, x^{t+1})}} + \eta_t\Exp{\norm{ e_{i,t+1}}}\\
    && + \Exp{\norm{\nabla^2 f_i(x^{t+1})(x^{t+1}-x^t)}}\\
    &\overset{(b)}{\leq}& \eta_t \Exp{\norm{v_i^t - \nabla f_i(x^t)}} + \Exp{\norm{\nabla^2 f_i(x^{t+1})}_{\text{op}}\norm{x^{t+1}-x^t}}\\
    && + (1-\eta_t) \Exp{\norm{\nabla^2 f_i(x^{t+1};\xi_i^{t+1})  - \nabla^2 f_i(x^{t+1})}\norm{x^{t+1}-x^t}} \\  
    && + \frac{\eta_t L_{h,i}}{2} \Exp{\norm{x^{t+1} -x^t}^2} + \eta_t\Exp{\norm{ e_{i,t+1}}}\\
    &\overset{(c)}{\leq}& \eta_t \Exp{\norm{v_i^t - \nabla f_i(x^t)}} + L_{i}\gamma_t + (1-\eta_t)\gamma_t\sigma_h + \frac{1}{2}\eta_t\gamma_t^2 L_{h,i} + \eta_t \sigma_g
\end{eqnarray*}
where in $(a)$ we used triangle inequality, in $(b)$ we used \Cref{assum:L_Hessian_comp_fcn}, 
in $(c)$ we used \Cref{assum:L_comp_fcn}, \Cref{assum:stoc_g_h} and the update rule for $x^{t+1}$. 

Therefore, we have 
\begin{eqnarray*}
   \Exp{\cV_{t+1}}  
   & \leq &  \sqrt{1-\alpha}    \Exp{\cV_{t}} + \frac{\sqrt{1-\alpha}}{n} \sum_{i=1}^n \Exp{\norm{v_i^{t+1}-v_i^t}} \\
   & \leq &  \sqrt{1-\alpha}    \Exp{\cV_{t}} + \sqrt{1-\alpha} \eta_t \Exp{\cU_t}  \\
   && + \sqrt{1-\alpha}(1-\eta_t) \gamma_t \sigma_h + \frac{\sqrt{1-\alpha}}{2} \gamma_t^2 \bar L_{h}  + \sqrt{1-\alpha} \gamma_t \bar L + \sqrt{1-\alpha} \eta_t \sigma_g,
\end{eqnarray*}
where $\bar L_h = \frac{1}{n}\sum_{i=1}^n L_{h,i}$ and $\bar L = \frac{1}{n}\sum_{i=1}^n L_{i}$.

\subsubsection{Error bound II} 

Define $\cU_t \eqdef \frac{1}{n}\sum_{i=1}^n \norm{v_i^t - \nabla f_i(x^t)}$. Then, using the update rule for $x^{t+1}$, we have 
\begin{eqnarray*}
    v_i^{t+1} - \nabla f_i(x^{t+1}) & = & (1-\eta_t)(v_i^t - \nabla f_i(x^t)) \\
    && + (1-\eta_t)(\nabla^2 f_i(x^{t+1};\xi_i^{t+1}) - \nabla^2 f_i(x^{t+1}))(x^{t+1}-x^t) \\
    && + (1-\eta_t) ( \nabla^2 f_i(x^{t+1})(x^{t+1}-x^t) - [\nabla f_i(x^{t+1}) - \nabla f_i(x^t)] ) \\
    && + \eta_t (\nabla f_i(x^{t+1};\xi_i^{t+1}) - \nabla f_i(x^{t+1}))\\
    &=& (1-\eta_t)(v_i^t - \nabla f_i(x^t)) + (1-\eta_t)S_{i,t+1}\\
    &&- (1-\eta_t)Z_{f_i}(x^t, x^{t+1}) + \eta_t e_{i,t+1}.
\end{eqnarray*} 
Therefore, using triangle inequality, we obtain
\begin{eqnarray*}
    \Exp{\cU_{t+1}} &\leq&   \frac{1-\eta_t}{n}\sum_{i=1}^n \Exp{\norm{v_i^{t} - \nabla f_i(x^{t})}}   + (1-\eta_t) \frac{1}{n}\sum_{i=1}^n \Exp{\norm{S_{i,t+1}}}  \\
    && + (1-\eta_t) \frac{1}{n}\sum_{i=1}^n \Exp{\norm{ Z_{f_i}(x^t,x^{t+1}) } }   + \eta_t \frac{1}{n}\sum_{i=1}^n \Exp{\norm{e_{i,t+1}}}\\
    &\overset{(a)}{\leq}& \frac{1-\eta_t}{n}\sum_{i=1}^n \Exp{\norm{v_i^{t} - \nabla f_i(x^{t})}}\\
    && +  \frac{1-\eta_t}{n}\sum_{i=1}^n \Exp{\norm{\nabla^2 f_i(x^{t+1};\xi^{t+1}_i) - \nabla^2 f_i(x^{t+1})}\norm{x^{t+1}-x^t}}  \\
    && + \frac{1-\eta_t}{2n}\sum_{i=1}^n L_{h,i}\Exp{\sqnorm{ x^{t+1}-x^t } } + \frac{\eta_t }{n}\sum_{i=1}^n \Exp{\norm{e_{i,t+1}}},
\end{eqnarray*}
where in $(a)$ we used \Cref{assum:L_Hessian_comp_fcn}.
Next, bounding the second term and the fourth term via \Cref{assum:stoc_g_h}, we get
\begin{eqnarray*}
    \Exp{\cU_{t+1}} & \leq &  \frac{1-\eta_t}{n}\sum_{i=1}^n \Exp{\norm{v_i^{t} - \nabla f_i(x^{t})}}  + (1-\eta_t) \sigma_h \Exp{\norm{x^{t+1}-x^t}}  \\
    && + (1-\eta_t) \frac{\bar L_{h}}{2} \Exp{\sqnorm{x^{t+1}-x^t}} + \eta_t \sigma_g\\
    & \leq & \frac{1-\eta_t}{n}\sum_{i=1}^n \Exp{\norm{v_i^{t} - \nabla f_i(x^{t})}}  + (1-\eta_t) \sigma_h \Exp{\norm{x^{t+1}-x^t}}  \\
    && + (1-\eta_t) \frac{\bar L_{h}}{2} \Exp{\sqnorm{x^{t+1}-x^t}} + \eta_t \sigma_g\\
    &\leq& (1-\eta_t)\Exp{\cU_{t}} + (1-\eta_t) \gamma_t\sigma_h  + (1-\eta_t)\gamma_t^2 \bar{L}_h + \eta_t\sigma_g,
\end{eqnarray*}
where in the last inequality we used the update rule for $x^{t+1}$.

\subsubsection{Error bound III} 
From the definition of $v^t = \frac{1}{n}\sum_{i=1}^n v_i^t$ and denoting $\hat{e}_{t+1} \eqdef v^{t+1} - \nabla f(x^{t+1}) $, we have 
\begin{eqnarray*}
    \hat{e}_{t+1}
    & = &  \frac{1}{n}\sum_{i=1}^n (v_i^{t+1} - \nabla f_i(x^{t+1}))\\
    & = & (1-\eta_t)(v^t - \nabla f(x^t)) \\
    && + (1-\eta_t) \frac{1}{n}\sum_{i=1}^n (\nabla^2 f_i(x^{t+1};\xi_i^{t+1}) - \nabla^2 f_i(x^{t+1}))(x^{t+1}-x^t) \\
    && + (1-\eta_t) \frac{1}{n}\sum_{i=1}^n ( \nabla^2 f_i(x^{t+1})(x^{t+1}-x^t) - [\nabla f_i(x^{t+1}) - \nabla f_i(x^t)] ) \\
    && + \eta_t \frac{1}{n}\sum_{i=1}^n (\nabla f_i(x^{t+1};\xi_i^{t+1}) - \nabla f_i(x^{t+1}))\\
    &=& (1-\eta_t)\hat{e}_t + (1-\eta_t)S_{t+1} - (1-\eta_t)Z_{f}(x^t, x^{t+1}) + \eta_t e_{t+1},
\end{eqnarray*}
where we used the following notation
\begin{eqnarray*}
    S_{t+1} &\eqdef& \frac{1}{n}\sum^n_{i=1} S_{i, t+1} = \frac{1}{n}\sum_{i=1}^n (\nabla^2 f_i(x^{t+1};\xi_i^{t+1}) - \nabla^2 f_i(x^{t+1}))(x^{t+1}-x^t);\\
    Z_f(x^t, x^{t+1}) &\eqdef& \nabla f(x^t) -\nabla f(x^{t+1}) - \nabla^2 f(x^{t+1})(x^{t} -x^{t+1});\\
    e_{t+1} &\eqdef& \frac{1}{n}\sum^n_{i=1} e_{i,t+1} = \frac{1}{n}\sum_{i=1}^n (\nabla f_i(x^{t+1};\xi_i^{t+1}) - \nabla f_i(x^{t+1})).
\end{eqnarray*}
Therefore, we obtain
\begin{eqnarray*}
    \hat{e}_{t+1} 
    & = & \prod_{\tau=0}^t (1-\eta_\tau) \hat{e}_{0}   + \sum_{j=0}^t \left(\prod_{\tau=j}^t (1-\eta_\tau)\right) S_{j+1}\\
    &&- \sum_{j=0}^t \left(\prod_{\tau=j}^t (1-\eta_\tau)\right) Z_f(x^j, x^{j+1})  + \sum_{j=0}^t \left(\prod_{\tau=j+1}^t (1-\eta_\tau)\right) \eta_j e_{j+1}.
\end{eqnarray*}
If $\eta_0 = 1$, then 
\begin{eqnarray*}
  \hat{e}_{t+1} 
    & = & \sum_{j=0}^t \left(\prod_{\tau=j}^t (1-\eta_\tau)\right) S_{j+1} - \sum_{j=0}^t \left(\prod_{\tau=j}^t (1-\eta_\tau)\right) Z_f(x^j, x^{j+1})  \\
    && + \sum_{j=0}^t \left(\prod_{\tau=j+1}^t (1-\eta_\tau)\right) \eta_j e_{j+1}.
\end{eqnarray*}

Therefore, from the definition of the Euclidean norm, by the triangle inequality, and by Jensen's inequality. 
\begin{eqnarray*}
 \Exp{  \norm{\hat{e}_{t+1}}  }  &\leq&   \sqrt{ \Exp{ \sqnorm{\sum_{j=0}^t \left(\prod_{\tau=j}^t (1-\eta_\tau)\right) {S}_{j+1}}  } }  + \Exp{ \norm{\sum_{j=0}^t \left(\prod_{\tau=j}^t (1-\eta_\tau)\right) Z_f(x^j, x^{j+1}) } } \\
&& + \sqrt{ \Exp{ \sqnorm{\sum_{j=0}^t \left(\prod_{\tau=j+1}^t (1-\eta_\tau)\right) \eta_j e_{j+1}} } }.
\end{eqnarray*}

Next, from \Cref{assum:stoc_g_h}, we can prove that $\Exp{S_{t+1}} = 0$, $\Exp{ \inp{S_l}{S_{i}}} = 0$ for $l \neq i$, $\Exp{e_{t+1}}=0$, $\Exp{\inp{e_l}{e_i}}=0$ for $l \neq i$.  Hence, 
\begin{eqnarray*}
    \Exp{  \norm{e_{t+1}}  } &\leq& \sqrt{ \sum_{j=0}^t \left(\prod_{\tau=j+1}^t (1-\eta_\tau)^2 \right) \Exp{ \sqnorm{ {S}_{j+1}}  } } + \sqrt{ \sum_{j=0}^t \left(\prod_{\tau=j}^t (1-\eta_\tau)^2\right) \eta_j^2 \Exp{ \sqnorm{ e_{j+1}} } }\\
    && + + \sum_{j=0}^t \left(\prod_{\tau=j+1}^t (1-\eta_\tau)\right)\Exp{ \norm{ Z_f(x^j, x^{j+1}) } }.
\end{eqnarray*}

Next, from \Cref{assum:stoc_g_h} and from the definition of $x^{t+1}$, we can prove that 
\begin{eqnarray*}
    \Exp{\sqnorm{S_{j+1}}} 
    & \leq & \frac{1}{n^2} \sum_{i=1}^n \Exp{\sqnorm{\nabla^2 f_i(x^{j+1};\xi_i^{j+1}) - \nabla^2 f_i(x^{j+1})} \cdot \sqnorm{x^{j+1} - x^j}} \leq \frac{\gamma_j^2}{n} \sigma_h^2,\\
     &\text{and}& \\
    \Exp{\sqnorm{e_{j+1}}} & \leq & \frac{1}{n^2}\sum_{i=1}^n \Exp{\sqnorm{ \nabla f_i(x^{j+1};\xi_i^{j+1}) - \nabla f_i(x^{j+1})  }} \leq \frac{\sigma_g^2}{n}.
\end{eqnarray*}
Therefore, 
\begin{eqnarray*}
    \Exp{  \norm{\hat{e}_{t+1}}  } &\leq&  \frac{\sigma_h}{\sqrt{n}}\sqrt{ \sum_{j=0}^t \left(\prod_{\tau=j+1}^t (1-\eta_\tau)^2 \right) \gamma_j^2  } + \frac{\sigma_g}{\sqrt{n}}\sqrt{ \sum_{j=0}^t \left(\prod_{\tau=j}^t (1-\eta_\tau)^2\right) \eta_j^2 }\\
    && + \sum_{j=0}^t \left(\prod_{\tau=j+1}^t (1-\eta_\tau)\right)\Exp{ \norm{ Z_f(x^j, x^{j+1}) } }.
\end{eqnarray*}

Next, by \Cref{assum:L_Hessian_fcn},  
\begin{eqnarray*}
    \Exp{  \norm{\hat{e}_{t+1}}  } &\leq& \frac{\sigma_h}{\sqrt{n}}\sqrt{ \sum_{j=0}^t \left(\prod_{\tau=j+1}^t (1-\eta_\tau)^2 \right) \gamma_j^2  } + \frac{ L_h}{2} \sum_{j=0}^t \left(\prod_{\tau=j+1}^t (1-\eta_\tau)\right)  \gamma_j^2  \\
    && + \frac{\sigma_g}{\sqrt{n}}\sqrt{ \sum_{j=0}^t \left(\prod_{\tau=j}^t (1-\eta_\tau)^2\right) \eta_j^2 } 
\end{eqnarray*}

If $\eta_t = \left( \frac{2}{t+2} \right)^{2/3}$ and $\gamma_t = \gamma_0 \left( \frac{2}{t+2} \right)^{2/3}$, then we can prove  that $\eta_0=1$, and from~\Cref{lemma:sum_prod} that
\begin{eqnarray*}
    \Exp{  \norm{v^{t+1} - \nabla f(x^{t+1})}  } 
    & \leq & \frac{\sigma_h }{\sqrt{n}} \sqrt{C(\nicefrac{4}{3},\nicefrac{2}{3}) \frac{\gamma_{t+1}^2}{\eta_{t+1}}} + \frac{C(\nicefrac{4}{3},\nicefrac{2}{3})}{2} L_h \frac{\gamma_{t+1}^2}{\eta_{t+1}} \\
    && + \frac{\sigma_g}{\sqrt{n}} \sqrt{C(\nicefrac{4}{3},\nicefrac{2}{3}) \eta_{t+1}}.
\end{eqnarray*}

\subsubsection{Bounding Lyapunov function}
Define our Lyapunov function $V_t = \Delta_{t} + C_{1,t} \cV_t + C_{2,t} \cU_t$ with $C_{1,t} = \frac{2\gamma_t}{1-\sqrt{1-\alpha}}$ and $C_{2,t} = \frac{2\gamma_t \sqrt{1-\alpha}}{1-\sqrt{1-\alpha}}$. Then, using \eqref{eq:descent_lemma}, we have 
\begin{eqnarray*}
    \Exp{V_{t+1}} 
    & = & \Exp{\Delta_{t+1} + C_{1,t+1} \cV_t + C_{2,t+1} \cU_{t+1}} \\
    & \leq & \Exp{\Delta_t} - \gamma_t \Exp{\norm{\nabla f(x^t)}} + 2 \gamma_t \Exp{\norm{v^t- \nabla f(x^t)}} + 2\gamma_t \Exp{\cV_t} + \frac{\gamma_t^2L}{2} \\
    && + C_{1,t+1} \Exp{\cV_{t+1}} + C_{2,t+1} \Exp{\cU_{t+1}}.
\end{eqnarray*}

Next, by the upper-bounds for $\Exp{\cV_{t+1}}$ and  $\Exp{\cU_{t+1}}$, 
\begin{eqnarray*}
   \Exp{ V_{t+1} }&\leq& \Exp{\Delta_t} - \gamma_t \Exp{\norm{\nabla f(x^t)}} + 2 \gamma_t \Exp{\norm{v^t- \nabla f(x^t)}} + 2\gamma_t \Exp{\cV_t} + \frac{\gamma_t^2L}{2} \\
    && + C_{1,t+1} ( \sqrt{1-\alpha}    \Exp{\cV_{t}} + \sqrt{1-\alpha} \eta_t \Exp{\cU_t})  \\
    && + C_{1,t+1}(\sqrt{1-\alpha}(1-\eta_t) \gamma_t \sigma_h + \frac{\sqrt{1-\alpha}}{2} \gamma_t^2 \bar L_{h}  + \sqrt{1-\alpha} \gamma_t \bar L + \sqrt{1-\alpha} \eta_t \sigma_g ) \\
    && + C_{2,t+1} ( (1-\eta_t) \Exp{\cU_t}  + (1-\eta_t) \gamma_t \sigma_h + (1-\eta_t) \gamma_t^2 \bar L_h   + \eta_t \sigma_g ).
\end{eqnarray*}

Next,  since $\gamma_{t+1} \leq \gamma_t$, we can prove that $C_{1,t+1} \leq C_{1,t}$ and that $C_{2,t+1} \leq C_{2,t}$, and also that 
\begin{eqnarray*}
    2\gamma_t + C_{1,t+1}\sqrt{1-\alpha}  & \leq & 2\gamma_t + C_{1,t}\sqrt{1-\alpha} = C_{1,t} \\
    C_{1,t+1} \sqrt{1-\alpha}\eta_t + C_{2,t+1} (1-\eta_t) & \leq& C_{1,t} \sqrt{1-\alpha}\eta_t + C_{2,t} (1-\eta_t)= C_{2,t}
\end{eqnarray*}
Therefore, 
\begin{eqnarray*}
    \Exp{ V_{t+1} } 
    & \leq & \Exp{V_t} - \gamma_t \Exp{\norm{\nabla f(x^t)}} + 2 \gamma_t \Exp{\norm{v^t- \nabla f(x^t)}}  + \frac{\gamma_t^2L}{2} \\
    && + C_{1,t} (\sqrt{1-\alpha}(1-\eta_t) \gamma_t \sigma_h + \frac{\sqrt{1-\alpha}}{2} \gamma_t^2 \bar L_{h}  + \sqrt{1-\alpha} \gamma_t \bar L + \sqrt{1-\alpha} \eta_t \sigma_g ) \\
    && + C_{2,t} \left( (1-\eta_t) \gamma_t \sigma_h + (1-\eta_t) \gamma_t^2 \bar L_h   + \eta_t \sigma_g  \right) \\
    & \leq & \Exp{V_t} - \gamma_t \Exp{\norm{\nabla f(x^t)}} + 2 \gamma_t \Exp{\norm{v^t- \nabla f(x^t)}}   \\
    && + \gamma_t^2  \left( \frac{4\sqrt{1-\alpha}}{1-\sqrt{1-\alpha}} \sigma_h + \frac{\sqrt{1-\alpha}}{1-\sqrt{1-\alpha}} \bar L  + \frac{L}{2}\right) \\
    && + \gamma_t^3 \left( \frac{4\sqrt{1-\alpha}}{1-\sqrt{1-\alpha}} \right) \bar L_h + \eta_t\gamma_t \frac{4\sqrt{1-\alpha}}{1-\sqrt{1-\alpha}} \sigma_g.
\end{eqnarray*}

Next, from the upper-bound of $\Exp{\norm{v^t- \nabla f(x^t)}}$, 
\begin{eqnarray*}
    \Exp{ V_{t+1} } & \leq & \Exp{V_t} - \gamma_t \Exp{\norm{\nabla f(x^t)}} \\
    && + 2 \gamma_t \left( \frac{\sigma_h }{\sqrt{n}} \sqrt{C(\nicefrac{4}{3},\nicefrac{2}{3}) \frac{\gamma_{t}^2}{\eta_{t}}} + \frac{ L_h C(\nicefrac{4}{3},\nicefrac{2}{3})}{2} \frac{\gamma_{t}^2}{\eta_{t}}  + \frac{\sigma_g}{\sqrt{n}} \sqrt{C(\nicefrac{4}{3},\nicefrac{2}{3}) \eta_{t}}  \right)  \\
    && + \gamma_t^2  \left( \frac{4\sqrt{1-\alpha}}{1-\sqrt{1-\alpha}} \sigma_h + \frac{\sqrt{1-\alpha}}{1-\sqrt{1-\alpha}} \bar L  + \frac{L}{2}\right) \\
    && + \gamma_t^3 \left( \frac{4\sqrt{1-\alpha}}{1-\sqrt{1-\alpha}} \right) \bar L_h + \eta_t\gamma_t \frac{4\sqrt{1-\alpha}}{1-\sqrt{1-\alpha}} \sigma_g \\
    & \leq & \Exp{V_t} - \gamma_t \Exp{\norm{\nabla f(x^t)}}  + \frac{2 C_1\sigma_h }{\sqrt{n}}\frac{\gamma_t^2}{\sqrt{\eta_t}}   + C^2_1 L_h \frac{\gamma_t^3}{\eta_t} + \frac{2C_1\sigma_g }{\sqrt{n}} \gamma_t \sqrt{\eta_t}  \\
    && +   \left( \frac{L}{2} +\frac{4\sigma_h}{\alpha^2}  + \frac{\bar L}{\alpha^2} \right) \gamma_t^2  + \frac{4\bar L_h}{\alpha^2}\gamma_t^3  + \frac{4\sigma_g}{\alpha^2} \eta_t\gamma_t.
\end{eqnarray*}
where we used the notation: $C_1 = \sqrt{ C(\nicefrac{4}{3},\nicefrac{2}{3}) }$ and used $\frac{\sqrt{1-\alpha}}{1-\sqrt{1-\alpha}} \leq \frac{1}{\alpha^2}$.

\subsubsection{Deriving the convergence rate}
By re-arranging the terms and by the telescopic series, 
\begin{eqnarray*}
    \frac{\sum_{t=0}^{T-1} \gamma_t \Exp{\norm{\nabla f(x^t)}}}{\sum_{t=0}^{T-1} \gamma_t}  
    & \leq & \frac{\Exp{V_0} -\Exp{V_T}}{\sum_{t=0}^{T-1}\gamma_t}   + \frac{2 C_1 \sigma_h }{\sqrt{n}} \frac{\sum_{t=0}^{T-1} \gamma_t^2\eta_t^{-\nicefrac{1}{2}}}{\sum_{t=0}^{T-1}\gamma_t} \\
    && + C^2_1 L_h  \frac{\sum_{t=0}^{T-1} \gamma_t^3 \eta_t^{-1}}{\sum_{t=0}^{T-1}\gamma_t}  + \frac{2C_1 \sigma_g}{\sqrt{n}} \frac{\sum_{t=0}^{T-1} \gamma_t \sqrt{\eta_t}}{ \sum_{t=0}^{T-1} \gamma_t} \\
    && +  \left(\frac{L}{2} + \frac{4\sigma_h}{\alpha^2}  + \frac{\bar L}{\alpha^2} \right) \frac{\sum_{t=0}^{T-1} \gamma_t^2}{ \sum_{t=0}^{T-1} \gamma_t} \\
    && +     \frac{4\bar L_h}{\alpha^2}   \frac{\sum_{t=0}^{T-1} \gamma_t^3}{ \sum_{t=0}^{T-1}\gamma_t} + \frac{4\sigma_g }{\alpha^2}  \frac{\sum_{t=0}^{T-1} \eta_t\gamma_t}{ \sum_{t=0}^{T-1}\gamma_t}.
\end{eqnarray*}

Since $\eta_t = \left( \frac{2}{t+2} \right)^{\nicefrac{2}{3}}$ and $\gamma_t = \gamma_0 \left( \frac{2}{t+2} \right)^{\nicefrac{2}{3}}$,  for $T \geq 1$ we obtain
\begin{eqnarray*}
    \sum_{t=0}^{T-1} \gamma_t &\geq&  T \gamma_{T-1} = \gamma_0T  \left(\frac{2}{T+1}\right)^{\nicefrac{2}{3}} \geq \gamma_0 T^{\nicefrac{1}{3}};\\
    \sum_{t=0}^{T-1} \gamma_t^2 \eta_t^{-\nicefrac{1}{2}}  &=& \gamma_0^2\sum_{t=0}^{T-1}  \left( \frac{2}{t+2} \right)^{\nicefrac{4}{3}} \left( \frac{2}{t+2} \right)^{-\nicefrac{1}{3} } = 2\gamma_0^2 \sum_{t=0}^{T-1} \frac{1}{t+2}   = 2\gamma_0^2 \sum^{T}_{t=1}\frac{1}{1+t}\\
    &\leq& 2\gamma^2_0 \int^T_{1}\frac{1}{1+t}dt = 2\gamma^2_0 \left(\log \left(T+1\right) -\log(2)\right) \leq 2\gamma^2_0 \log \left(T+1\right); \\
    \sum_{t=0}^{T-1} \gamma_t \sqrt{\eta_t} &=& \gamma_0 \sum_{t=0}^{T-1} \left( \frac{2}{t+2} \right)^{\nicefrac{2}{3}} \left( \frac{2}{t+2} \right)^{\nicefrac{1}{3} } =   2\gamma_0 \sum_{t=0}^{T-1}\frac{1}{t+2} \leq 2\gamma_0 \log\left(T+1\right);\\
    \sum_{t=0}^{T-1} \gamma_t^2 &=& \gamma_0^2 \sum_{t=0}^{T-1} \left( \frac{2}{t+2} \right)^{\nicefrac{4}{3}}  = 2^{\nicefrac{4}{3}}\gamma_0^2 \sum_{t=0}^{T-1} \frac{1}{(t+2)^{\nicefrac{4}{3}}} = 2^{\nicefrac{3}{2}}\gamma_0^2 \sum_{t=1}^{T} \frac{1}{(1+t)^{\nicefrac{4}{3}}}\\
    &\leq& 2^{\nicefrac{4}{3}}\gamma_0^2 \int^T_{1}\frac{1}{(1+t)^{\nicefrac{4}{3}}}dt  = 2^{\nicefrac{4}{3}}\cdot 3\gamma^2_0 \left(\frac{1}{\sqrt[3]{2}} - \frac{1}{\sqrt[3]{T+1}}\right) \leq 6\gamma^2_0;\\
    \sum_{t=0}^{T-1} \gamma_t \eta_t &=& \frac{1}{\gamma_0}\sum_{t=0}^{T-1} \gamma_t^2 \leq 6\gamma_0;\qquad\quad \sum_{t=0}^{T-1}\gamma^3_t\eta^{-1}_t ~~=~~  \gamma_0\sum_{t=0}^{T-1}\gamma^2_t \leq 6\gamma^3_0\\
    \sum_{t=0}^{T-1} \gamma_t^3  &=& \gamma^3_0\sum_{t=0}^{T-1} \left( \frac{2}{t+2} \right)^{2} = 4\gamma^3_0 \sum_{t=1}^{T} \frac{1}{(1+t)^2} \\
    &\leq& 4\gamma^3_0 \int_{1}^{T} \frac{1}{(1+t)^2}dt = 4\gamma^3_0 \left(\frac{1}{2}-\frac{1}{T+1}\right) \leq 2\gamma^3_0.
    \end{eqnarray*}
Therefore, denoting $\tilde{x}^T$ as a point randomly chosen from $\{x^0,x^1,\ldots,x^{T-1}\}$ with probability $\nicefrac{\gamma_t}{\sum_{t=0}^{T-1}\gamma_t}$ for $t=0,1,\ldots,T-1$, we obtain 
\begin{eqnarray*}
    \Exp{\norm{\nabla f(\tilde{x}^T)}}
    & \leq & \frac{\Exp{V_0}}{\gamma_0 T^{\nicefrac{1}{3}}}   + \frac{2 C_1 \sigma_h }{\sqrt{n}} \frac{2\gamma_0^2 \log\left(T+1\right)}{\gamma_0 T^{\nicefrac{1}{3}}} + C^2_1  L_h  \frac{6\gamma^3_0}{\gamma_0 T^{\nicefrac{1}{3}}} +     \frac{4\bar L_h}{\alpha^2}   \frac{2\gamma^3_0}{ \gamma_0 T^{\nicefrac{1}{3}}} \\
    && +  \left(\frac{L}{2} + \frac{4\sigma_h}{\alpha^2}  + \frac{\bar L}{\alpha^2} \right) \frac{6\gamma_0^2}{ \gamma_0 T^{\nicefrac{1}{3}}} + \frac{2C_1 \sigma_g}{\sqrt{n}} \frac{2\gamma_0 \log\left(T+1\right)}{ \gamma_0 T^{\nicefrac{1}{3}}} + \frac{4\sigma_g }{\alpha^2}  \frac{6\gamma_0}{ \gamma_0 T^{\nicefrac{1}{3}}}\\
    &=& \widetilde{\cO}\left(\frac{\nicefrac{V_0}{\gamma_0} + \left(\sigma_g + \gamma_0\sigma_h\right)\left(\nicefrac{1}{\sqrt{n}} + \nicefrac{1}{\alpha^2}\right)+ \gamma_0\left(L + \nicefrac{\bar L}{\alpha^2} \right) +  \gamma^2_0\left(L_h +\nicefrac{\bar L_h}{\alpha^2}\right) }{T^{\nicefrac{1}{3}}}\right).
\end{eqnarray*}

\newpage

\section{EF21-MVR}

In \algname{EF21-MVR}, we update the iterates $\{x^t\}$ according to: 
\begin{eqnarray}
    x^{t+1} = x^t - \gamma_t \frac{g^t}{\norm{g^t}}, \notag
\end{eqnarray}
where $g^t = \frac{1}{n}\sum_{i=1}^n g_i^t$ and $v^t = \frac{1}{n}\sum_{i=1}^n v_i^t$ with $g_i^t,v_i^t$ being governed by 
\begin{eqnarray}
    g_i^{t+1} & = & g_i^t + \mathcal{C}_i^{t+1}(v_i^{t+1} - g_i^t), \quad \text{and} \notag\\
    v_i^{t+1} & = & (1-\eta_t)(v_i^t + \nabla f_i(x^{t+1};\xi_i^{t+1}) - \nabla f_i(x^t;\xi_i^{t+1})) + \eta_t \nabla f_i(x^{t+1};\xi_i^{t+1}). \label{eq:mommetum_update_mvr}
\end{eqnarray}

\subsection{Convergence Proof}

We prove the result in the following steps. 

\subsubsection{Deriving the descent inequality} From \Cref{lem:descent_lemma}, we obtain \eqref{eq:descent_lemma}:
\begin{equation*}
    \Delta_{t+1} + \gamma_t \|\nabla f(x^t)\| \le \Delta_t  + 2 \gamma_t \norm{v^t- \nabla f(x^t)} +\frac{ 2\gamma_t}{n}\sum_{i=1}^n\norm{g_i^t-v_i^t} + \frac{\gamma_t^2L}{2}.
\end{equation*}

\subsubsection{Error bound I} 
Define $\cV_t \eqdef \frac{1}{n}\sum_{i=1}^n \norm{g_i^t-v_i^t}$. From~\Cref{lemma:boundg_and_v}, 
\begin{eqnarray*}
   \Exp{\cV_{t+1}} \leq \sqrt{1-\alpha}    \Exp{\cV_{t}} + \frac{\sqrt{1-\alpha}}{n} \sum_{i=1}^n \Exp{\norm{v_i^{t+1}-v_i^t}}.
\end{eqnarray*}

To complete the upper-bound for $\Exp{\cV_{t+1}}$, we need to upper-bound $\Exp{\norm{v_i^{t+1}-v_i^t}}$. 
Since 
\begin{eqnarray*}
    v_i^{t+1} - v_i^t = - \eta_t (v_i^t - \nabla f_i(x^t)) - \eta_t S_{i,t+1} + (1-\eta_t)S_{i,t+1} + \eta_t e_{i,t+1},
\end{eqnarray*}
where we defined $S_{i,t+1} \eqdef \nabla f_i(x^{t};\xi_i^{t+1}) - \nabla f_i(x^{t+1};\xi_i^{t+1})$  and $e_{i,t+1} \eqdef \nabla f_i(x^{t};\xi_i^{t+1}) - \nabla f_i(x^t)$, from the definition of the Euclidean norm and by the triangle inequality, we get
\begin{eqnarray*}
    \Exp{\norm{v_i^{t+1}-v_i^t}} & \leq & \eta_t \Exp{\norm{v_i^t -\nabla f_i(x^t)}} + \Exp{ \norm{S_{i,t+1}} } + \Exp{\norm{e_{i,t+1}}} \\
    & \leq & \eta_t \Exp{\norm{v_i^t -\nabla f_i(x^t)}} + \sqrt{ \Exp{ \sqnorm{S_{i,t+1}} }  } + \sqrt{ \Exp{\sqnorm{e_{i,t+1}}} }, 
\end{eqnarray*}
where in the last inequality we used Jensen's inequality. 
Next, by \Cref{assum:L_stoc_comp_fcn}, \Cref{assum:stoc_g_h}, and by the update rule for $x^{t+1}$,  
\begin{eqnarray*}
    \Exp{\norm{v_i^{t+1}-v_i^t}} 
    & \leq & \eta_t \Exp{\norm{v_i^t - \nabla f_i(x^t)}} +  \gamma_t L_{\text{ms},i} + \eta_t \sigma_g.
\end{eqnarray*}
Plugging the above result into the upper-bound for $\Exp{\cV_{t+1}}$, and then defining $\cU_t \eqdef \frac{1}{n}\sum\limits_{i=1}^n \norm{v_i^t-\nabla f_i(x^t)}$,  we obtain
\begin{eqnarray}\label{eqn:MVR_V_t}
   \Exp{\cV_{t+1}}
   & \leq &  \sqrt{1-\alpha}    \Exp{\cV_{t}} + \sqrt{1-\alpha} \eta_t \Exp{\cU_t} + \sqrt{1-\alpha} \gamma_t \bar L_{\text{ms}} + \sqrt{1-\alpha} \eta_t \sigma_g,
\end{eqnarray}
where $\bar L_{\text{ms}} = \frac{1}{n}\sum_{i=1}^n L_{\text{ms},i}$.

\subsubsection{Error bound II} 
Define $\cU_t \eqdef \frac{1}{n}\sum\limits_{i=1}^n \norm{v_i^t - \nabla f_i(x^t)}$. Then, we have 
\begin{eqnarray*}
    v_i^{t+1} - \nabla f_i(x^{t+1}) 
    & \overset{(a)}{=}& (1-\eta_t)(v_i^t + \nabla f_i(x^{t+1};\xi_i^{t+1}) - \nabla f_i(x^t;\xi_i^{t+1})) \\
    && + \eta_t \nabla f_i(x^{t+1};\xi_i^{t+1}) - \nabla f_i(x^{t+1}) \\
    & \overset{(b)}{=} & (1-\eta_t)(v_i^t - \nabla f_i(x^t)) - (1-\eta_t) D_{i,t+1} + \eta_t e_{i,t+1},
\end{eqnarray*}
where  in $(a)$ we used the update rule for $v^{t+1}_i$, i.e. \eqref{eq:mommetum_update_mvr}, in $b$ we defined $D_{i,t+1} \eqdef \nabla f_i(x^{t};\xi_i^{t+1}) - \nabla f_i(x^{t+1};\xi_i^{t+1}) - [\nabla f_i(x^t) - \nabla f_i(x^{t+1})]$] and $e_{i,t+1} \eqdef \nabla f_i(x^{t+1};\xi_i^{t+1}) - \nabla f_i(x^{t+1})$.
Therefore, 
\begin{eqnarray*}
    \Exp{\cU_{t+1}}
    & \overset{(a)}{\leq}&  \frac{1-\eta_t}{n}\sum_{i=1}^n \Exp{\norm{v_i^{t} - \nabla f_i(x^{t})}} +\frac{1-\eta_t}{n}\sum_{i=1}^n \Exp{\norm{D_{i,t+1}}}  +  \frac{\eta_t}{n}\sum_{i=1}^n \Exp{\norm{e_{i,t+1}}} \\
    & \overset{(b)}{\leq} &  \frac{1-\eta_t}{n}\sum_{i=1}^n \Exp{\norm{v_i^{t} - \nabla f_i(x^{t})}} + \frac{1-\eta_t}{n}\sum_{i=1}^n \sqrt{\Exp{\sqnorm{D_{i,t+1}}}}\\
    && +  \frac{\eta_t}{n}\sum_{i=1}^n \sqrt{\Exp{\sqnorm{e_{i,t+1}}}} \\
    &\overset{(c)}{\leq} &  \frac{1-\eta_t}{n}\sum_{i=1}^n \Exp{\norm{v_i^{t} - \nabla f_i(x^{t})}} + \frac{1-\eta_t}{n}\sum_{i=1}^n \sqrt{\Exp{\sqnorm{S_{i,t+1}}}}\\
    && +  \frac{\eta_t}{n}\sum_{i=1}^n \sqrt{ \Exp{\sqnorm{e_{i,t+1}}}},
\end{eqnarray*}
where in $(a)$ we used the update rule for $v^{t+1}_i$, in $(b)$ we used Jensen's Inequality, in $(c)$ we used bias-variance decomposition, more precisely 
$\Exp{\sqnorm{D_{i,t+1}}}\leq \Exp{\sqnorm{S_{i,t+1}}} $.
Next, by \Cref{assum:L_stoc_comp_fcn} and \Cref{assum:stoc_g_h}, 
\begin{eqnarray}
    \Exp{\cU_{t+1}}
    & \leq &  (1-\eta_t) \Exp{\cU_t} + (1-\eta_t) \frac{1}{n}\sum_{i=1}^n L_{\text{ms},i}\sqrt{  \Exp{\sqnorm{ x^t - x^{t+1} } } }   + \eta_t \sigma_g \notag \\
    & \leq & (1-\eta_t) \Exp{\cU_t} + (1-\eta_t)\gamma_t  \bar L_{\text{ms}}  + \eta_t \sigma_g,  \label{eqn:MVR_U_t}
\end{eqnarray}
where  in the last inequality we used the update rule for $x^{t+1}$ and denoted $\bar L_{\text{ms}} = \frac{1}{n}\sum_{i=1}^n L_{\text{ms},i}$.

\subsubsection{Error bound III} 
From the definition of $v^t = \frac{1}{n}\sum_{i=1}^n v_i^t$,  we denote $\hat{e}_{t+1}$ as follows 
\begin{eqnarray*}
    \hat{e}_{t+1} & \eqdef & v^{t+1} - \nabla f(x^{t+1}) \\
    & = &  \frac{1}{n}\sum_{i=1}^n (v_i^{t+1} - \nabla f_i(x^{t+1}))\\
    & = & (1-\eta_t)(v^t - \nabla f(x^t)) - (1-\eta_t) \frac{1}{n}\sum_{i=1}^n D_{i,t+1}  + \eta_t \frac{1}{n}\sum_{i=1}^n e_{i,t+1},
\end{eqnarray*}
where $D_{i,t+1} \eqdef \nabla f_i(x^{t};\xi_i^{t+1}) - \nabla f_i(x^{t+1};\xi_i^{t+1}) - [\nabla f_i(x^t) - \nabla f_i(x^{t+1})]$ and $e_{i,t+1} = \nabla f_i(x^{t+1};\xi_i^{t+1}) - \nabla f_i(x^{t+1}) $.

Next, by applying Lemma~\ref{lemma:summing_the_recursion}, 
\begin{eqnarray*}
    \hat{e}_{t+1} & = & \prod_{\tau=0}^t (1-\eta_\tau) \hat{e}_{0} + \sum_{j=0}^t \left(\prod_{\tau=j+1}^t (1-\eta_\tau)\right) {D}_{j+1}  + \sum_{j=0}^t \left(\prod_{\tau=j}^t (1-\eta_\tau)\right) \eta_j e_{j+1},
\end{eqnarray*}
where $D_{t+1} = \frac{1}{n}\sum\limits_{i=1}^n D_{i,t+1}$ and $e_{t+1} = \frac{1}{n}\sum_{i=1}\limits^n e_{i,t+1}$. If $\eta_0 = 1$, then we have 
\begin{eqnarray*}
    \hat{e}_{t+1} & = &  \sum_{j=0}^t \left(\prod_{\tau=j+1}^t (1-\eta_\tau)\right) {D}_{j+1} + \sum_{j=0}^t \left(\prod_{\tau=j}^t (1-\eta_\tau)\right) \eta_j e_{j+1}. 
\end{eqnarray*}
Therefore, 
\begin{eqnarray*}
    \Exp{\norm{\hat{e}_{t+1}}} 
   &  \leq & \Exp{ \norm{ \sum_{j=0}^t \left(\prod_{\tau=j+1}^t (1-\eta_\tau)\right) {D}_{j+1}  } }  + \Exp{ \norm{ \sum_{j=0}^t \left(\prod_{\tau=j}^t (1-\eta_\tau)\right) \eta_j e_{j+1} } } \\
    &\leq & \sqrt{ \Exp{ \sqnorm{ \sum_{j=0}^t \left(\prod_{\tau=j+1}^t (1-\eta_\tau)\right) {D}_{j+1}  } }  } \\
    &&+ \sqrt{ \Exp{ \sqnorm{ \sum_{j=0}^t \left(\prod_{\tau=j}^t (1-\eta_\tau)\right) \eta_j e_{j+1} } } },
\end{eqnarray*} 
where in the last inequality we used Jensen's inequality.

Next, from \Cref{assum:stoc_g_h}, we can prove that $\Exp{D_{t+1}} = 0$, $\Exp{ \inp{D_l}{D_{i}}} = 0$ for $l \neq i$, $\Exp{e_{t+1}}=0$, $\Exp{\inp{e_l}{e_i}}=0$ for $l \neq i$.  Hence, 
\begin{eqnarray*}
    \Exp{\norm{\hat{e}_{t+1}}} 
    & \leq & \sqrt{ \sum_{j=0}^t \left(\prod_{\tau=j+1}^t (1-\eta_\tau)^2\right)  \Exp{ \sqnorm{  {D}_{j+1}  } }  } \\
    && + \sqrt{ \sum_{j=0}^t \left(\prod_{\tau=j}^t (1-\eta_\tau)^2\right) \eta_j^2 \Exp{ \sqnorm{  e_{j+1} } } } \\
    & \leq & \sqrt{ \sum_{j=0}^t \left(\prod_{\tau=j+1}^t (1-\eta_\tau)^2\right)  \frac{1}{n}\sum_{i=1}^n\Exp{ \sqnorm{  \nabla f_i(x^{j};\xi_i^{j+1}) - \nabla f_i(x^{j+1};\xi_i^{j+1})  } }  } \\
    && + \sqrt{ \sum_{j=0}^t \left(\prod_{\tau=j}^t (1-\eta_\tau)^2\right) \eta_j^2 \Exp{ \sqnorm{  e_{j+1} } } }.
\end{eqnarray*}

Next, by \Cref{assum:L_stoc_comp_fcn} and \Cref{assum:stoc_g_h}, and  by the fact that $\Exp{\sqnorm{e_{j+1}}} \leq \frac{\sigma_g^2}{n}$, 
\begin{eqnarray*}
    \Exp{\norm{\hat{e}_{t+1} }} 
  & \leq&  \sqrt{\bar L_{\text{ms}}^2}\sqrt{ \sum_{j=0}^t \left(\prod_{\tau=j+1}^t (1-\eta_\tau)^2\right) \cdot  \gamma_j^2  } + \frac{\sigma_g}{\sqrt{n}}\sqrt{ \sum_{j=0}^t \left(\prod_{\tau=j}^t (1-\eta_\tau)^2\right) \eta_j^2 }\\
  & \leq&  \sqrt{\bar L_{\text{ms}}^2}\sqrt{ \sum_{j=0}^t \left(\prod_{\tau=j+1}^t (1-\eta_\tau)\right) \cdot  \gamma_j^2  } + \frac{\sigma_g}{\sqrt{n}}\sqrt{ \sum_{j=0}^t \left(\prod_{\tau=j}^t (1-\eta_\tau)\right) \eta_j^2 },
\end{eqnarray*} 
where $\bar L_{\text{ms}}^2 = \frac{1}{n}\sum_{i=1}^n L_{\text{ms},i}^2$.

If $\eta_t = \left( \frac{2}{t+2} \right)^{2/3}$ and $\gamma_t = \gamma_0 \left( \frac{2}{t+2} \right)^{2/3}$, then we can prove  that $\eta_0=1$, and from~\Cref{lemma:sum_prod} that 
\begin{eqnarray*}\label{eqn:MVR_whole_v_err}
    \Exp{\norm{v^{t+1} - \nabla f(x^{t+1}) }} 
  \leq \sqrt{\bar L_{\text{ms}}^2}\sqrt{ C(\nicefrac{4}{3},\nicefrac{2}{3}) \frac{\gamma_{t+1}^2}{\eta_{t+1}} } + \frac{\sigma_g}{\sqrt{n}} \sqrt{ C(\nicefrac{4}{3},\nicefrac{2}{3}) \frac{\eta_{t+1}^2}{\eta_{t+1}} }.
\end{eqnarray*}

\subsubsection{Bounding Lyapunov function}
Define our Lyapunov function $V_t = \Delta_{t} + C_{1,t} \cV_t + C_{2,t} \cU_t$ with $C_{1,t} = \frac{2\gamma_t}{1-\sqrt{1-\alpha}}$ and $C_{2,t} = \frac{2\gamma_t \sqrt{1-\alpha}}{1-\sqrt{1-\alpha}}$. Then, 
\begin{eqnarray*}
    \Exp{V_{t+1}} 
    & = & \Exp{\Delta_{t+1} + C_{1,t+1} \cV_t + C_{2,t+1} \cU_{t+1}} \\
    & \leq & \Exp{\Delta_t} - \gamma_t \Exp{\norm{\nabla f(x^t)}} + 2 \gamma_t \Exp{\norm{v^t- \nabla f(x^t)}} + 2\gamma_t \Exp{\cV_t} + \frac{\gamma_t^2L}{2} \\
    && + C_{1,t+1} \Exp{\cV_{t+1}} + C_{2,t+1} \Exp{\cU_{t+1}},
\end{eqnarray*}
where we use \eqref{eq:descent_lemma}.

Next, by the upper-bounds for $\Exp{\cV_{t+1}}$ and  $\Exp{\cU_{t+1}}$, 
\begin{eqnarray*}
    \Exp{ V_{t+1} }
   & \leq & \Exp{\Delta_t} - \gamma_t \Exp{\norm{\nabla f(x^t)}} + 2 \gamma_t \Exp{\norm{v^t- \nabla f(x^t)}} + 2\gamma_t \Exp{\cV_t} + \frac{\gamma_t^2L}{2} \\
    && + C_{1,t+1} (\sqrt{1-\alpha}    \Exp{\cV_{t}} + \sqrt{1-\alpha} \eta_t \Exp{\cU_t} + \sqrt{1-\alpha} \gamma_t \bar L_{\text{ms}} + \sqrt{1-\alpha} \eta_t \sigma_g) \\
    && + C_{2,t+1} ((1-\eta_t) \Exp{\cU_t} + (1-\eta_t)\gamma_t  \bar L_{\text{ms}}  + \eta_t \sigma_g ).
\end{eqnarray*}

Next,  since $\gamma_{t+1} \leq \gamma_t$, we can prove that $C_{1,t+1} \leq C_{1,t}$ and that $C_{2,t+1} \leq C_{2,t}$, and also that 
\begin{eqnarray*}
    2\gamma_t + C_{1,t+1}\sqrt{1-\alpha}  & \leq & 2\gamma_t + C_{1,t}\sqrt{1-\alpha} = C_{1,t} \\
    C_{1,t+1} \sqrt{1-\alpha}\eta_t + C_{2,t+1} (1-\eta_t) & \leq& C_{1,t} \sqrt{1-\alpha}\eta_t + C_{2,t} (1-\eta_t)= C_{2,t}
\end{eqnarray*}
Therefore, 
\begin{eqnarray*}
   \Exp{ V_{t+1} } 
   & \leq & \Exp{V_t} - \gamma_t \Exp{\norm{\nabla f(x^t)}} + 2 \gamma_t \Exp{\norm{v^t- \nabla f(x^t)}}  + \frac{\gamma_t^2L}{2} \\
    && + C_{1,t} (\sqrt{1-\alpha} \gamma_t \bar L_{\text{ms}} + \sqrt{1-\alpha} \eta_t \sigma_g) \\
    && + C_{2,t} \left( (1-\eta_t)\gamma_t  \bar L_{\text{ms}}  + \eta_t \sigma_g  \right) \\
     & \leq & \Exp{V_t} - \gamma_t \Exp{\norm{\nabla f(x^t)}} + 2 \gamma_t \Exp{\norm{v^t- \nabla f(x^t)}}   \\
    && + \gamma_t^2  \left( \frac{4\sqrt{1-\alpha}}{1-\sqrt{1-\alpha}} \bar L_{\text{ms}}  + \frac{L}{2}\right) + \eta_t\gamma_t \frac{4\sqrt{1-\alpha}}{1-\sqrt{1-\alpha}} \sigma_g.
\end{eqnarray*}

Next, from the upper-bound of $\Exp{\norm{v^t- \nabla f(x^t)}}$, 
\begin{eqnarray*}
   \Exp{ V_{t+1} } 
   & \leq & \Exp{V_t} - \gamma_t \Exp{\norm{\nabla f(x^t)}} + 2 \gamma_t \left( \sqrt{\bar L_{\text{ms}}^2}\sqrt{ C(\nicefrac{4}{3},\nicefrac{2}{3}) \frac{\gamma_{t}^2}{\eta_{t}} } + \frac{\sigma_g}{\sqrt{n}} \sqrt{ C(\nicefrac{4}{3},\nicefrac{2}{3}) \eta_t }  \right)  \\
    && + \gamma_t^2  \left( \frac{4\sqrt{1-\alpha}}{1-\sqrt{1-\alpha}} \bar L_{\text{ms}}  + \frac{L}{2}\right) + \eta_t\gamma_t \frac{4\sqrt{1-\alpha}}{1-\sqrt{1-\alpha}} \sigma_g \\
    & \leq & \Exp{V_t} - \gamma_t \Exp{\norm{\nabla f(x^t)}} + \frac{\gamma_t^2}{\sqrt{\eta_t}} \cdot  2 \sqrt{ \bar L^2_{ms} C(\nicefrac{4}{3},\nicefrac{2}{3}) } +  \gamma_t\sqrt{\eta_t} \cdot \frac{2\sigma_g \sqrt{C(\nicefrac{4}{3}, \nicefrac{2}{3})}}{\sqrt{n}}  \\
    && + \gamma_t^2  \left( \frac{4\sqrt{1-\alpha}}{1-\sqrt{1-\alpha}} \bar L_{\text{ms}}  + \frac{L}{2}\right) + \eta_t\gamma_t \frac{4\sqrt{1-\alpha}}{1-\sqrt{1-\alpha}} \sigma_g\\
    &\leq& \Exp{V_t} - \gamma_t \Exp{\norm{\nabla f(x^t)}} +  2 C_1  \bar L^2_{ms}\frac{\gamma_t^2}{\sqrt{\eta_t}}    +  \frac{2 C_1 \sigma_g }{\sqrt{n}}\gamma_t\sqrt{\eta_t} \\
    && + \gamma_t^2  \left( \frac{L}{2} + \frac{4 {\bar L}_{\text{ms}} }{\alpha^2}   \right) + \eta_t\gamma_t \frac{4\sigma_g}{\alpha^2},
\end{eqnarray*} 
where in the last inequality we used $ \frac{\sqrt{1-\alpha}}{1-\sqrt{1-\alpha}} \leq \frac{1}{\alpha^2}$ and denoted $C_1 = \sqrt{ C(\nicefrac{4}{3},\nicefrac{2}{3}) }$.

\subsubsection{Deriving the convergence rate}
By re-arranging the terms and by the telescopic series, 
\begin{eqnarray*}
    \frac{\sum_{t=0}^{T-1} \gamma_t\Exp{\norm{\nabla f(x^t)}}}{\sum_{t=0}^{T-1} \gamma_t}  
    & \leq & \frac{\Exp{V_0} -\Exp{V_T}}{\sum_{t=0}^{T-1}\gamma_t} + \frac{2C_1\sigma_g }{\sqrt{n}} \frac{\sum_{t=0}^{T-1} \gamma_t \sqrt{\eta_t}}{ \sum_{t=0}^{T-1} \gamma_t} +     \frac{4 \sigma_g}{\alpha^2}  \frac{\sum_{t=0}^{T-1} \gamma_t \eta_t}{ \sum_{t=0}^{T-1}\gamma_t}\\
    && + 2 C_1 {\bar L_{\text{ms}}} \frac{\sum_{t=0}^{T-1} \gamma_t^2 \eta_t^{-\nicefrac{1}{2}}}{\sum_{t=0}^{T-1}\gamma_t}  +  \left(\frac{L}{2} + \frac{4 {\bar L}_{\text{ms}}}{\alpha^2} \right) \frac{\sum_{t=0}^{T-1} \gamma_t^2}{ \sum_{t=0}^{T-1} \gamma_t}  .
\end{eqnarray*}

Since $\eta_t = \left( \frac{2}{t+2} \right)^{2/3}$ and $\gamma_t = \gamma_0 \left( \frac{2}{t+2} \right)^{2/3}$, and  for $T \geq 1$ we have 
\begin{eqnarray*}
    \sum_{t=0}^{T-1} \gamma_t &\geq&  T \gamma_{T-1} = \gamma_0T  \left(\frac{2}{T+1}\right)^{\nicefrac{2}{3}} \geq \gamma_0 T^{\nicefrac{1}{3}};\\
    \sum_{t=0}^{T-1} \gamma_t^2 \eta_t^{-\nicefrac{1}{2}}  &=& \gamma_0^2\sum_{t=0}^{T-1}  \left( \frac{2}{t+2} \right)^{\nicefrac{4}{3}} \left( \frac{2}{t+2} \right)^{-\nicefrac{1}{3} } = 2\gamma_0^2 \sum_{t=0}^{T-1} \frac{1}{t+2}   = 2\gamma_0^2 \sum^{T}_{t=1}\frac{1}{1+t}\\
    &\leq& 2\gamma^2_0 \int^T_{1}\frac{1}{1+t}dt = 2\gamma^2_0 \left(\log \left(T+1\right) -\log(2)\right) \leq 2\gamma^2_0 \log \left(T+1\right); \\
    \sum_{t=0}^{T-1} \gamma_t \sqrt{\eta_t} &=& \gamma_0 \sum_{t=0}^{T-1} \left( \frac{2}{t+2} \right)^{\nicefrac{2}{3}} \left( \frac{2}{t+2} \right)^{\nicefrac{1}{3} } =   2\gamma_0 \sum_{t=0}^{T-1}\frac{1}{t+2} \leq 2\gamma_0 \log\left(T+1\right);\\
    \sum_{t=0}^{T-1} \gamma_t^2 &=& \gamma_0^2 \sum_{t=0}^{T-1} \left( \frac{2}{t+2} \right)^{\nicefrac{4}{3}}  = 2^{\nicefrac{4}{3}}\gamma_0^2 \sum_{t=0}^{T-1} \frac{1}{(t+2)^{\nicefrac{4}{3}}} = 2^{\nicefrac{3}{2}}\gamma_0^2 \sum_{t=1}^{T} \frac{1}{(1+t)^{\nicefrac{4}{3}}}\\
    &\leq& 2^{\nicefrac{4}{3}}\gamma_0^2 \int^T_{1}\frac{1}{(1+t)^{\nicefrac{4}{3}}}dt  = 2^{\nicefrac{4}{3}}\cdot 3\gamma^2_0 \left(\frac{1}{\sqrt[3]{2}} - \frac{1}{\sqrt[3]{T+1}}\right) \leq 6\gamma^2_0;\\
    \sum_{t=0}^{T-1} \gamma_t \eta_t &=& \frac{1}{\gamma_0}\sum_{t=0}^{T-1} \gamma_t^2 \leq 6\gamma_0.
\end{eqnarray*}

Therefore,  denoting $\tilde{x}^T$ as a point randomly chosen from $\{x^0,x^1,\ldots,x^{T-1}\}$ with probability $\nicefrac{\gamma_t}{\sum_{t=0}^{T-1}\gamma_t}$ for $t=0,1,\ldots,T-1$, we obtain
\begin{eqnarray*}
    \Exp{\norm{\nabla f(\tilde{x}^T)}}
    & \leq & \frac{\Exp{V_0}}{\gamma_0 T^{\nicefrac{1}{3}}} + \frac{2C_1\sigma_g }{\sqrt{n}} \frac{2\gamma_0 \log\left(T+1\right)}{ \gamma_0 T^{\nicefrac{1}{3}}} +     \frac{4 \sigma_g}{\alpha^2}  \frac{6\gamma_0}{\gamma_0 T^{\nicefrac{1}{3}}}\\
    && + 2 C_1 {\bar L_{\text{ms}}} \frac{2\gamma_0^2 \log\left(T+1\right)}{\gamma_0 T^{\nicefrac{1}{3}}}  +  \left(\frac{L}{2} + \frac{4 {\bar L}_{\text{ms}}}{\alpha^2} \right) \frac{6\gamma_0^2}{\gamma_0 T^{\nicefrac{1}{3}}} \\
    &=& \widetilde{\cO}\left(\frac{\nicefrac{V_0}{\gamma_0} + \sigma_g\left(\nicefrac{1}{\sqrt{n}} +\nicefrac{1}{\alpha^2}\right) +\gamma_0\left(L + {\bar L}_{\text{ms}} + \nicefrac{{\bar L}_{\text{ms}}}{\alpha^2}\right)}{T^{\nicefrac{1}{3}}}\right).
\end{eqnarray*}

\newpage
\section{Deep Learning experiments}
\subsection{Hardware and Datasets.}
To evaluate the performance of the proposed methods in training Deep Neural Networks (DNNs), we utilized the ResNet-18 architecture \citep{he2016deep}. ResNet-18 is a prominent model for image classification, and its architecture is also frequently adapted for tasks such as feature extraction in image segmentation, object detection, image embedding, and image captioning. Our experiments involved training all layers of the ResNet-18 model, corresponding to an optimization problem with $d = 11,173,962$ parameters.

All implementations were developed in PyTorch \citep{paszke2019pytorch}, and experiments were conducted on the CIFAR-10 dataset \citep{krizhevsky2009learning}. Numerical evaluations were performed on a server-grade machine running Ubuntu 18.04 (Linux Kernel v5.4.0). This system was equipped with dual 16-core 3.3 GHz Intel Xeon processors (totaling 32 cores) and four NVIDIA A100 GPUs, each with 40GB of memory.

To simulate a federated learning environment, we adopted a data distribution strategy inspired by \citet{gao2024econtrol}. Specifically, 50\% of the CIFAR-10 dataset was allocated to 10 clients based on class labels, such that data points with the $i$-th label (for $i \in \{0, \dots, 9\}$) were assigned to client $i+1$. The remaining 50\% of the dataset was distributed randomly and uniformly among the clients. Subsequently, each client’s local data was partitioned into a training set (90\%) and a test set (10\%). This partitioning scheme introduces data heterogeneity, a common characteristic of federated settings. For communication compression, we employed the Top-K sparsifier, retaining 10\% of the coordinates (i.e., $\nicefrac{K}{d} = 0.1$).

\subsection{Implementation details.}
We implement both \algname{$\|\text{EF21-HM}\|$} and \algname{$\|\text{EF21-RHM}\|$} in PyTorch, leveraging automatic differentiation to compute Hessian-vector products (HVPs) efficiently. This is achieved via the well-known identity
\begin{align*}
    \nabla^2 f(x;\xi)\,v \;=\; \nabla_x\big\langle \nabla_x f(x;\xi),\, v \big\rangle,
\end{align*}
which allows for the computation of the HVP without materializing the full Hessian matrix. Our implementation first obtains the gradient while constructing the computational graph, and then calls \texttt{torch.autograd.grad} a second time with the displacement vector as input to produce the HVP.

\paragraph{\algname{$\|\text{EF21-HM}\|$}.}
At each iteration $t$ and worker $i$, we form the displacement vector $\Delta^{t+1} = x^{t+1} - x^t$. The stochastic gradient $\nabla f_i(x^{t+1};\xi_i^{t+1})$ and the HVP $\quad\text{and}\quad h_v^{t+1} \;\gets\; \nabla^2 f_i(x^{t+1};\xi_i^{t+1})\,\Delta^{t+1}$ are then computed at the new point $x^{t+1}$ using the same minibatch $\xi_i^{t+1}$.
This is implemented in two stages: (i) a single backward pass with $\texttt{create\_graph=True}$ to obtain the stochastic gradient and retain the graph, followed by (ii) a call to \texttt{autograd.grad} that uses the list of parameter gradients as `outputs` and $\Delta^{t+1}$ as `grad\_outputs` to obtain the HVP. The momentum buffer is subsequently updated using this Hessian correction:
\begin{align*}
    v_i^{t+1} \;=\; (1-\eta_t)\bigl(v_i^t + h_v^{t+1}\bigr) \;+\; \eta_t\, \nabla f_i(x^{t+1};\xi_i^{t+1}).
\end{align*}
This approach has a computational cost equivalent to two backpropagations per minibatch.
\paragraph{\algname{$\|\text{EF21-RHM}\|$}.}

The randomized variant evaluates the HVP at an interpolated point,
\begin{align*}
    \hat{x}^{t+1} \;=\; q_t\,x^{t+1} + (1-q_t)\,x^t, \qquad q_t \sim \mathcal{U}(0,1),
\end{align*}

while the stochastic gradient is still computed at $x^{t+1}$. The implementation proceeds as follows: (i) a first backward pass at $x^{t+1}$ with $\texttt{create\_graph=True}$ is performed to obtain $\nabla f_i(x^{t+1}; \xi_i^{t+1})$; (ii) the model's parameters are temporarily and in-place swapped to $\hat{x}^{t+1}$ within a $\texttt{torch.no\_grad()}$ context; (iii) a forward and backward pass is executed to build the first-order graph at $\hat{x}^{t+1}$; (iv) an \texttt{autograd.grad} HVP call is made at $\hat{x}^{t+1}$ with `grad\_outputs` set to $\Delta^{t+1}$; and finally, (v) the original parameters at $x^{t+1}$ are restored, and the optimizer step is applied with the randomized Hessian-corrected momentum:
\begin{align*}
    v_i^{t+1} \;=\; (1-\eta_t)\bigl(v_i^t + \nabla^2 f_i(\hat{x}^{t+1};\xi_i^{t+1})\,\Delta^{t+1}\bigr) \;+\; \eta_t\, \nabla f_i(x^{t+1};\xi_i^{t+1}).
\end{align*}
This process incurs a cost of approximately three backpropagation-equivalents per minibatch.

\paragraph{\algname{$\|\text{EF21-MVR}\|$}.}
The momentum-with-variance-reduction (MVR) variant, also known as STORM, replaces the Hessian correction with a gradient difference computed on the same minibatch. At iteration $t$ on worker $i$, we compute two gradients using the same data sample $\xi_i^{t+1}$:
\begin{align*}
    \nabla f_i(x^{t+1};\xi_i^{t+1}) \quad\text{and}\quad \nabla f_i(x^t;\xi_i^{t+1}).
\end{align*}
The momentum state is then updated as:
\begin{align*}
    v_i^{t+1} \;=\; (1-\eta_t)\Bigl(v_i^t + \nabla f_i(x^{t+1};\xi_i^{t+1}) - \nabla f_i(x^t;\xi_i^{t+1})\Bigr) \;+\; \eta_t\, \nabla f_i(x^{t+1};\xi_i^{t+1}).
\end{align*}
Our implementation achieves this by: (i) caching the parameters $x^t$ in the optimizer's state; (ii) temporarily swapping the live parameters to $x^t$ under \texttt{torch.no\_grad()}; (iii) calling \texttt{autograd.grad} to obtain $\nabla f_i(x^t;\xi_i^{t+1})$ without modifying the model's `.grad` attributes; (iv) restoring the parameters to $x^{t+1}$; (v) performing a standard \texttt{loss.backward()} call to compute $\nabla f_i(x^{t+1};\xi_i^{t+1})$; and (vi) passing both gradients to the optimizer step. This path costs approximately two backpropagations per minibatch and does not require second-order graph construction.

\paragraph{\algname{$\|\text{EF21-IGT}\|$}.}
Iterative Gradient Transport (IGT) maintains a first-order computational cost by evaluating the stochastic gradient at an extrapolated point. On worker $i$ at iteration $t$, let $\Delta^{t+1} = x^{t+1} - x^t$ be the recent parameter displacement. IGT forms the extrapolated point
\[
    x_{\mathrm{ex}}^{t+1} \;=\; x^{t+1} \;+\; \frac{1-\eta_t}{\eta_t}\,\Delta^{t+1},
\]
and uses $\nabla f_i(x_{\mathrm{ex}}^{t+1};\xi_i^{t+1})$ in the momentum update. Our implementation caches the previous weights $x^t$ in the optimizer state. At each step, it calculates the displacement, temporarily moves the model parameters to the extrapolated point $x_{\mathrm{ex}}^{t+1}$ within a \texttt{no\_grad} context, performs a single backward pass to obtain the extrapolated gradient, restores the original parameters, and then invokes the optimizer step. This entire procedure requires only a single backpropagation per minibatch.

\paragraph{Other Methods.}
The baseline algorithms, including \algname{EF21-SGD}, \algname{EF21-SGDM}, \algname{$\norm{\text{EF21-SGDM}}$}, and \algname{EControl}, all follow a standard training procedure requiring a single first-order backpropagation per minibatch. After computing the loss, a call to \texttt{loss.backward()} is made, followed by the optimizer step. The differences between these methods lie entirely within the optimizer's internal logic for state updates and do not incur additional backpropagation costs.

\subsection{Hyperparameter Tuning.}
We benchmark the proposed algorithms---\algname{$\|\text{EF21-HM}\|$}, \algname{$\|\text{EF21-RHM}\|$}, \algname{$\norm{\text{EF21-MVR}}$}, and \algname{$\norm{\text{EF21-IGT}}$}---against several state-of-the-art error feedback methods: \algname{EF21-SGD} \citep{fatkhullin2025ef21}, \algname{EF21-SGDM} \citep{fatkhullin2023momentum}, \algname{$\norm{\text{EF21-SGDM}}$} \citep{khirirat2024errorfeedbackl0l1smoothnessnormalization}, and \algname{EControl} \citep{gao2024econtrol}. A comprehensive summary of the hyperparameter search space for each method is provided in Table~\ref{table:hyperparameters}.

\paragraph{Momentum Parameter Tuning.}
For the baseline methods \algname{EF21-SGDM} and \algname{EControl}, the momentum parameter $\eta$ was set to a constant value of $0.1$, following the recommendations in their respective original publications. For our proposed \algname{$\|\text{EF21-HM}\|$} and the \algname{$\norm{\text{EF21-SGDM}}$} baseline, we explored both constant $\eta$ values from the set $\{0.01, 0.1, 0.2\}$ and theoretically motivated decreasing schedules. Based on the superior performance of decreasing schedules observed for these variants, our tuning for \algname{$\|\text{EF21-RHM}\|$}, \algname{$\norm{\text{EF21-MVR}}$}, and \algname{$\norm{\text{EF21-IGT}}$} focused exclusively on their theoretically derived schedules. As a practical adaptation to prevent the momentum parameter from diminishing too rapidly in the early stages of training, we update the epoch-dependent momentum schedule $\eta_e$ on a per-epoch basis rather than per-iteration.

\paragraph{Stepsize Tuning.}
For the non-normalized baselines (\algname{EF21-SGD}, \algname{EF21-SGDM}, and \algname{EControl}), we tuned a constant stepsize $\gamma$ from the set $\{1.0, 0.1, 0.05, 0.01, 0.005\}$. For all normalized and second-order methods, we evaluated both a constant stepsize $\gamma \in \{1.0, 0.1, 0.05, 0.01\}$ and an epoch-dependent decreasing schedule of the form $\gamma_e = \gamma_0(e+1)^{-p}$, where the initial learning rate $\gamma_0$ was tuned from the set $\{2.0, 1.0, 0.5, 0.1\}$ and the exponent $p$ was chosen based on our theoretical analysis for each algorithm.

\begin{table}[h!]
\centering
\caption{Summary of hyperparameter tuning search space for all methods.}
\label{table:hyperparameters}
\begin{threeparttable}
\makebox[\linewidth]{%
\small
\begin{tabular}{@{}lll@{}}
\toprule
\textbf{Method} & \textbf{Learning Rate ($\gamma$)} & \textbf{Momentum ($\eta$)} \\
\midrule
\algname{EF21-SGDM}   & Constant: $\gamma \in \{1.0, 0.1, 0.05, 0.01, 0.005\}$ & Constant: $\eta = 0.1$ \\
\algname{EControl}   & Constant: $\gamma \in  \{1.0, 0.1, 0.05, 0.01, 0.005\}$ & Constant: $\eta = 0.1$ \\
\algname{EF21-SGD}    & Constant: $\gamma \in \{1.0, 0.1, 0.05, 0.01, 0.005\}$ & Not Applicable \\
\midrule
\algname{$\norm{\text{EF21-MVR}}$}    & \makecell[l]{Constant: $\gamma \in \{1.0, 0.1, 0.05, 0.01\}$ \\ or Decreasing: $\gamma_e = \gamma_0(e+1)^{-2/3}$} & Decreasing: $\eta_e = (2(e+2)^{-1})^{2/3}$ \\
\algname{$\norm{\text{EF21-RHM}}$} & \makecell[l]{Constant: $\gamma \in \{1.0, 0.1, 0.05, 0.01\}$ \\ or Decreasing: $\gamma_e = \gamma_0(e+1)^{-2/3}$} & Decreasing: $\eta_e = (2(e+2)^{-1})^{2/3}$ \\
\algname{$\norm{\text{EF21-SGDM}}$} & \makecell[l]{Constant: $\gamma \in \{1.0, 0.1, 0.05, 0.01\}$ \\ or Decreasing: $\gamma_e = \gamma_0(e+1)^{-0.75}$} & \makecell[l]{Constant: $\eta \in \{0.01, 0.1, 0.2\}$ \\ or Decreasing: $\eta_e = (2(e+2)^{-1})^{0.5}$} \\
\algname{$\norm{\text{EF21-IGT}}$}   & \makecell[l]{Constant: $\gamma \in \{1.0, 0.1, 0.05, 0.01\}$ \\ or Decreasing: $\gamma_e = \gamma_0(e+1)^{-5/7}$} & Decreasing: $\eta_e = (2(e+2)^{-1})^{4/7}$ \\
\algname{$\norm{\text{EF21-HM}}$} & \makecell[l]{Constant: $\gamma \in \{1.0, 0.1, 0.05, 0.01\}$ \\ or Decreasing: $\gamma_e = \gamma_0(e+1)^{-2/3}$} & \makecell[l]{Constant: $\eta \in \{0.01, 0.1, 0.2\}$ \\ or Decreasing: $\eta_e = (2(e+2)^{-1})^{2/3}$} \\
\bottomrule
\end{tabular}}
\begin{tablenotes}[para,flushleft]
\footnotesize
\item 1. For all decreasing schedules, $\gamma_0$ was tuned from the set $\{2.0, 1.0, 0.5, 0.1\}$;

\item 2. $e$ denotes the epoch index. The base momentum value $\eta_0$ was 1.0 for all decreasing schedules.

\end{tablenotes}
\end{threeparttable}
\end{table}

All methods were trained for a fixed budget of $90$ epochs. Since the per-iteration communication cost is identical for all compared algorithms, the total number of epochs serves as a direct proxy for the total bits communicated. Upon completion of all experimental runs, the optimal hyperparameters (stepsize $\gamma_e$ and momentum parameter $\eta_e$) for each method were selected based on the best validation accuracy achieved and observed stable convergence behavior.
A summary of the selected tuned hyperparameters is provided in \Cref{table:tuned_hyperparameters}, and the best-achieved accuracy metrics for each method are detailed in \Cref{table:accuracy_results}.

\begin{table}[htbp]
\centering
\caption{Summary of the optimally tuned hyperparameters for each method.}
\label{table:tuned_hyperparameters}
\begin{threeparttable}
\makebox[\linewidth]{%
\small
\begin{tabular}{@{}lll@{}}
\toprule
\textbf{Method} & \textbf{Learning Rate ($\gamma$)} & \textbf{Momentum ($\eta$)} \\ \midrule
\algname{EF21-SGDM} & Constant $\gamma = 0.1$ & Constant $\eta = 0.1$ \\
\algname{EControl} & Constant $\gamma = 1.0$ & Constant $\eta = 0.1$ \\
\algname{EF21-SGD} & Constant $\gamma = 1.0$ & --- \\
\midrule
\algname{$\norm{\text{EF21-MVR}}$} & Constant $\gamma = 0.1$ & Decreasing $\eta_e = (\nicefrac{2}{e+2})^{2/3}$ \\
\algname{$\|\text{EF21-RHM}\|$} & Constant $\gamma = 0.1$ & Decreasing $\eta_e = (\nicefrac{2}{e+2})^{2/3}$ \\
\algname{$\norm{\text{EF21-SGDM}}$} & Constant $\gamma = 0.1$ & Decreasing $\eta_e = (\nicefrac{2}{e+2})^{0.5}$ \\
\algname{$\norm{\text{EF21-IGT}}$} & Constant $\gamma = 0.1$ & Decreasing $\eta_e = (\nicefrac{2}{e+2})^{0.57}$ \\
\algname{$\|\text{EF21-HM}\|$} & Constant $\gamma = 0.1$ & Decreasing $\eta_e = (\nicefrac{2}{e+2})^{2/3}$ \\ \bottomrule
\end{tabular}}
\begin{tablenotes}[para,flushleft]
\footnotesize
\item 1. $e$ denotes the epoch index. The base momentum value $\eta_0$ was 1.0 for all decreasing schedules.
\end{tablenotes}
\end{threeparttable}
\end{table}

\begin{table}[htbp]
\centering
\caption{Best performance metrics achieved by each method when training ResNet-18 on CIFAR-10, sorted by validation accuracy. The top two results are highlighted in \textbf{bold}.}
\label{table:accuracy_results}
\begin{threeparttable}
\resizebox{\textwidth}{!}{%
\begin{tabular}{@{}lccccc@{}}
\toprule
\textbf{Method} & \textbf{\makecell{Best Val.\\Accuracy (\%)}} & \textbf{\makecell{Corr. Test\\Accuracy (\%)}} & \textbf{\makecell{Epoch of\\Best}} & \textbf{\makecell{GPU Time\\to Best}} & \textbf{\makecell{Wall Time\\to Best}} \\ \midrule
\algname{EF21-SGDM}          & $74.66$          & $73.53$          & $76$ & 0h 35m 59s & 0h 35m 59s \\
\algname{EControl}           & $76.38$          & $74.49$          & $79$ & 0h 38m 06s & 0h 38m 06s \\
\algname{EF21-SGD}           & $77.50$          & $75.90$          & $69$ & 0h 30m 35s & 0h 30m 36s \\
\midrule
\algname{$\norm{\text{EF21-MVR}}$}  & $79.30$          & $78.77$          & $72$ & 0h 44m 33s & 0h 44m 33s \\
\algname{$\|\text{EF21-RHM}\|$} & $81.48$          & $80.54$          & $66$ & 1h 35m 10s & 1h 35m 10s \\
\algname{$\norm{\text{EF21-SGDM}}$} & $82.66$          & $81.70$          & $79$ & 0h 37m 54s & 0h 37m 55s \\
\textbf{\algname{$\norm{\text{EF21-IGT}}$}}  & $\mathbf{83.48}$          & $\mathbf{81.77}$          & $\mathbf{70}$ & \textbf{0h 39m 03s} & \textbf{0h 39m 03s} \\
\textbf{\algname{$\|\text{EF21-HM}\|$}}  & $\mathbf{84.32}$ & $\mathbf{83.22}$ & $\mathbf{75}$ & \textbf{1h 37m 55s} & \textbf{1h 37m 56s} \\ \bottomrule
\end{tabular}}

\begin{tablenotes}[para,flushleft]
\footnotesize
    \item[*] \textbf{GPU Time to Best} has the following precise definition. Let $e^\star$ be the (first) epoch index that attains the maximal validation accuracy. For each epoch $e\in\{1,\dots,e^\star\}$ we create CUDA events $\mathrm{start}_e$ and $\mathrm{end}_e$ via \texttt{torch.cuda.Event(enable\_timing=True)}. We record $\mathrm{start}_e$ immediately before the first training minibatch of epoch $e$, and we record $\mathrm{end}_e$ \emph{after} all GPU work for that epoch has finished (training \emph{and} validation \emph{and} test). We then call \texttt{torch.cuda.synchronize()} and compute the per epoch device time
    $\tau_e \;=\; \nicefrac{\texttt{start\_e.elapsed\_time(end\_e)}}{1000}\quad\text{seconds}$.
    The reported quantity is the partial sum
    $T^{\mathrm{GPU}}_{\le e^\star} \;=\; \sum_{e=1}^{e^\star} \tau_e,$
    which accumulates all measured device time up to and including the best epoch $e^\star$. \textbf{Wall Time to Best} is the corresponding real elapsed time measured with \texttt{time.time()}, reset immediately after the epoch-0 snapshot; at epoch $e^\star$ it equals the cumulative \texttt{wall\_seconds} logged by the code. In our setup, runs execute one at a time on a single GPU, and each epoch’s CUDA events bracket \emph{all} GPU work with an explicit \texttt{torch.cuda.synchronize()}, so wall time closely matches GPU time (CPU-only overheads are comparatively small).
\end{tablenotes}

\end{threeparttable}
\end{table}


\subsection{Performance Comparison}

As the experimental results in \Cref{table:accuracy_results} and the convergence plots indicate, our proposed momentum variants demonstrate a significant performance improvement over existing baselines. The Hessian-corrected method, \algname{$\norm{\text{EF21-HM}}$}, consistently achieves the highest final test accuracy and the lowest test loss, underscoring the benefits of incorporating second-order information. Its advantage is clearly illustrated in \Cref{fig:performance_plots_epoch} and \Cref{fig:performance_plots_time_full}, where it not only reaches a superior final state but also maintains a stable convergence trajectory.
While \algname{$\norm{\text{EF21-HM}}$} sets the performance ceiling, our other proposed methods—\algname{$\norm{\text{EF21-RHM}}$}, \algname{$\norm{\text{EF21-MVR}}$}, and \algname{$\norm{\text{EF21-IGT}}$}—also offer substantial gains. 

To provide a comprehensive analysis, we present the results from two perspectives: convergence per epoch (\Cref{fig:performance_plots_epoch}), which measures sample efficiency, and convergence in wall-clock time (\Cref{fig:performance_plots_time_full} and \Cref{fig:performance_plots_time_truncated}), which measures computational efficiency.

\paragraph{Key Observations}
\begin{enumerate}
    \item \textbf{Epoch-Based Performance (\Cref{fig:performance_plots_epoch}):} When measured per epoch, the proposed second-order and advanced first-order methods are highly effective. \algname{$\norm{\text{EF21-HM}}$} exhibits the best overall performance, achieving the highest test accuracy and lowest loss. However, \algname{$\norm{\text{EF21-RHM}}$}, \algname{$\norm{\text{EF21-IGT}}$}, and \algname{$\norm{\text{EF21-MVR}}$} also demonstrate superior sample efficiency, clearly outperforming all existing baselines and reaching a higher-quality solution within the 90-epoch budget.

    \item \textbf{Wall-Clock Time Performance (\Cref{fig:performance_plots_time_full}):} This view highlights the trade-off between per-iteration cost and convergence speed. As the training was run for a fixed number of epochs, methods with higher computational costs, such as \algname{$\norm{\text{EF21-HM}}$} and \algname{$\norm{\text{EF21-RHM}}$}, naturally take longer to complete the full training schedule. This aligns with the per-epoch costs reported in the section \ref{sec:gpu}. While they achieve the best final results, their time-to-solution may be longer.

    \item \textbf{Time-to-Solution Analysis (\Cref{fig:performance_plots_time_truncated}):} By truncating the timeline to the point where the fastest methods complete, we can directly compare their time-to-solution efficiency. A key finding here is that \algname{$\norm{\text{EF21-IGT}}$}, despite being a first-order method, achieves a test accuracy and loss trajectory that is remarkably competitive with the much more expensive \algname{$\norm{\text{EF21-HM}}$}. It rapidly converges to a high-accuracy region, making it a highly efficient choice. Similarly, \algname{$\norm{\text{EF21-MVR}}$} also demonstrates strong performance in this view, positioning it as another excellent option that balances computational cost and convergence speed.
\end{enumerate}

\begin{figure}[htbp]
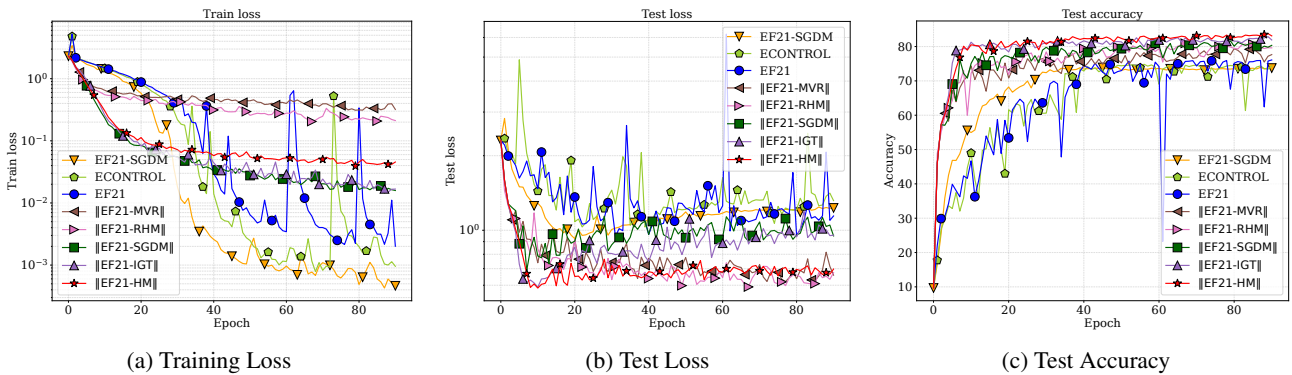

    \centering
    \begin{subfigure}{0.32\textwidth}
        \includegraphics[width=\linewidth]{plots/resnet18_cifar10/train_loss_epoch.pdf}
        \caption{Training Loss}
        \label{fig:train_loss_epoch}
    \end{subfigure}
    \hfill 
    \begin{subfigure}{0.32\textwidth}
        \includegraphics[width=\linewidth]{plots/resnet18_cifar10/test_loss_epoch.pdf}
        \caption{Test Loss}
        \label{fig:test_loss_epoch}
    \end{subfigure}
    \hfill 
    \begin{subfigure}{0.32\textwidth}
        \includegraphics[width=\linewidth]{plots/resnet18_cifar10/test_acc_epoch.pdf}
        \caption{Test Accuracy}
        \label{fig:test_acc_epoch}
    \end{subfigure}
    \caption{Performance comparison of all methods on CIFAR-10 with ResNet-18, plotted as a function of epochs. The proposed momentum variants, particularly \algname{$\norm{\text{EF21-HM}}$} and \algname{$\norm{\text{EF21-IGT}}$}, show superior sample efficiency.}
    \label{fig:performance_plots_epoch}
\end{figure}

\begin{figure}[htbp]
    \centering
    \begin{subfigure}{0.32\textwidth}
        \includegraphics[width=\linewidth]{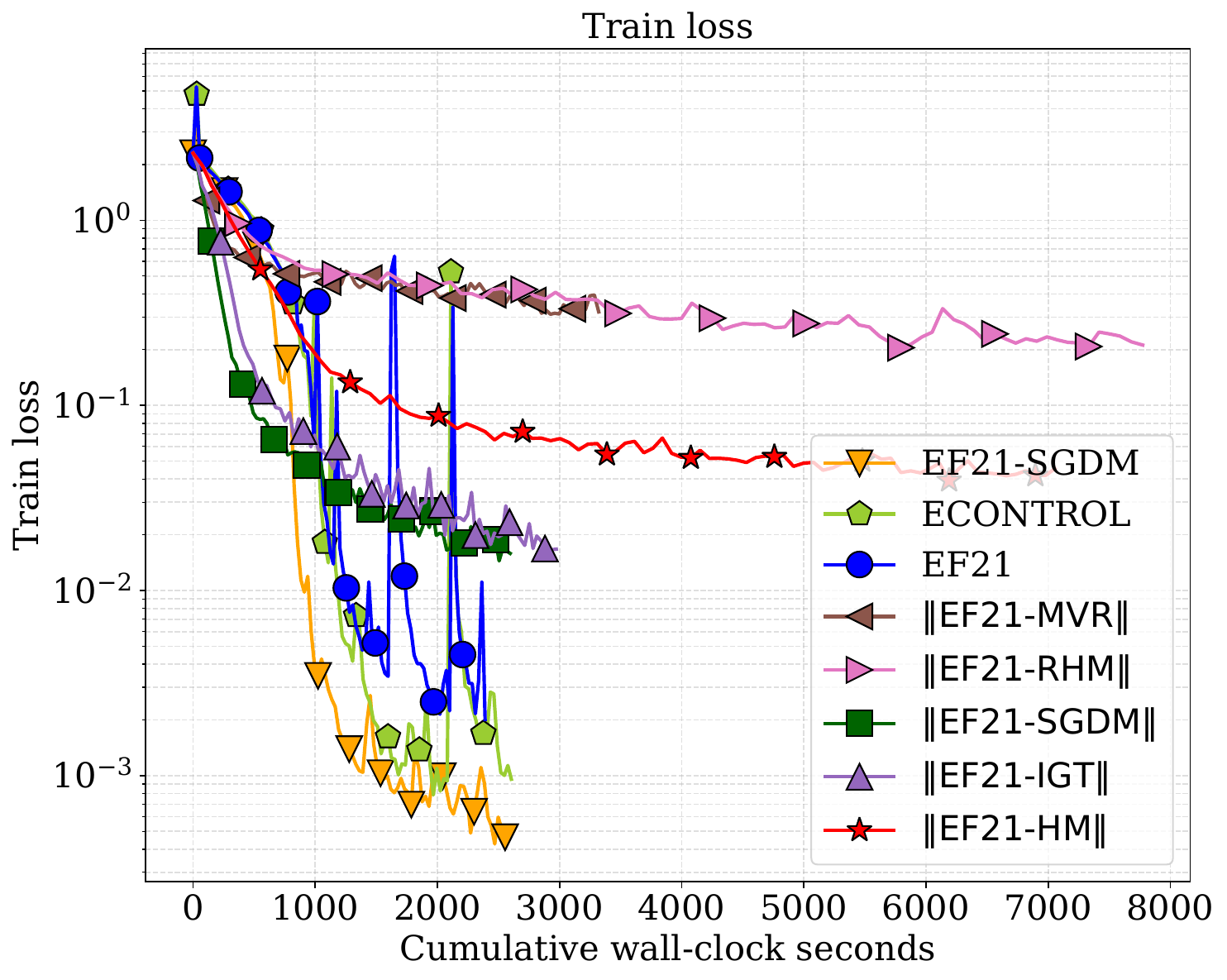}
        \caption{Training Loss}
        \label{fig:train_loss_time_full}
    \end{subfigure}
    \hfill 
    \begin{subfigure}{0.32\textwidth}
        \includegraphics[width=\linewidth]{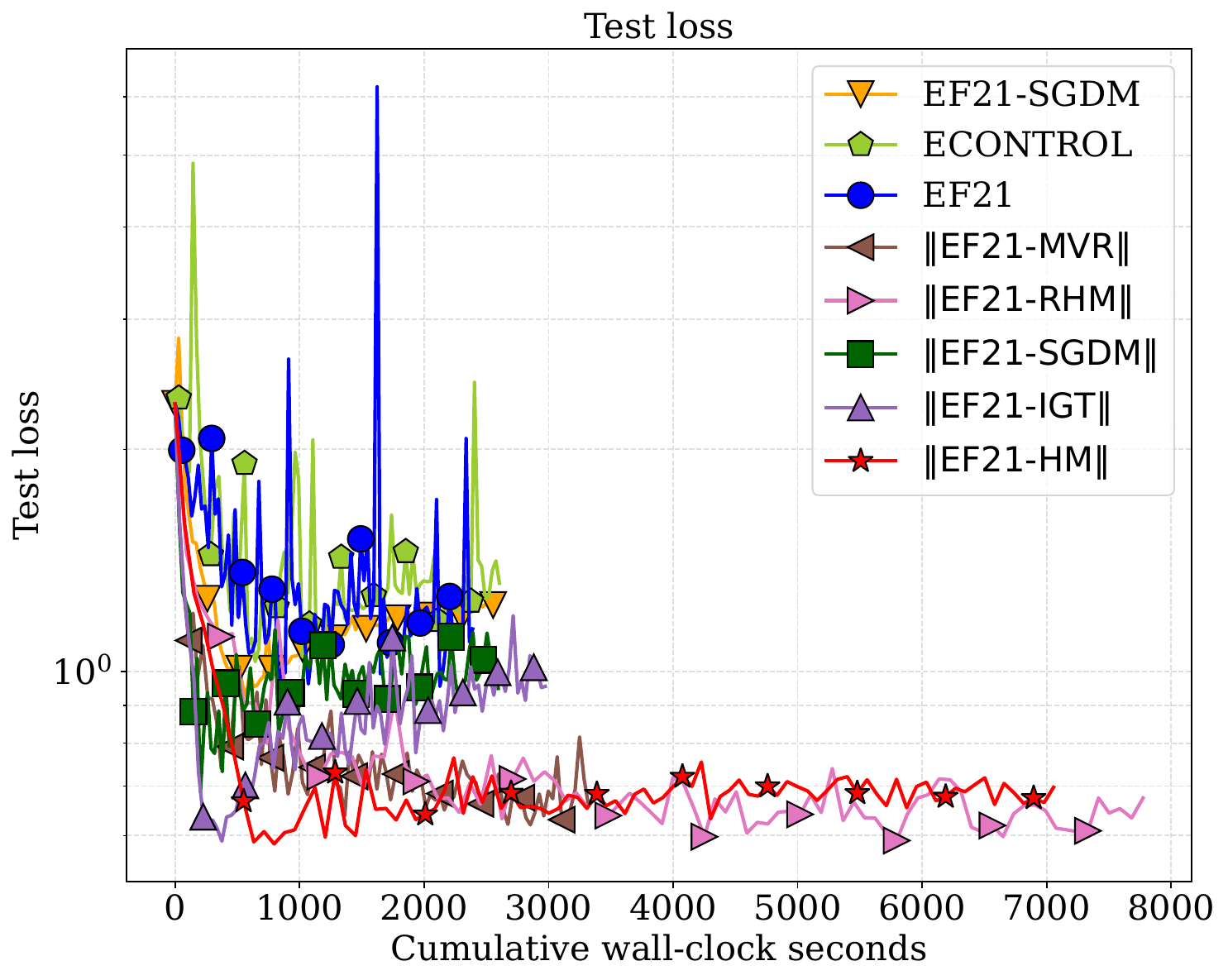}
        \caption{Test Loss}
        \label{fig:test_loss_time_full}
    \end{subfigure}
    \hfill 
    \begin{subfigure}{0.32\textwidth}
        \includegraphics[width=\linewidth]{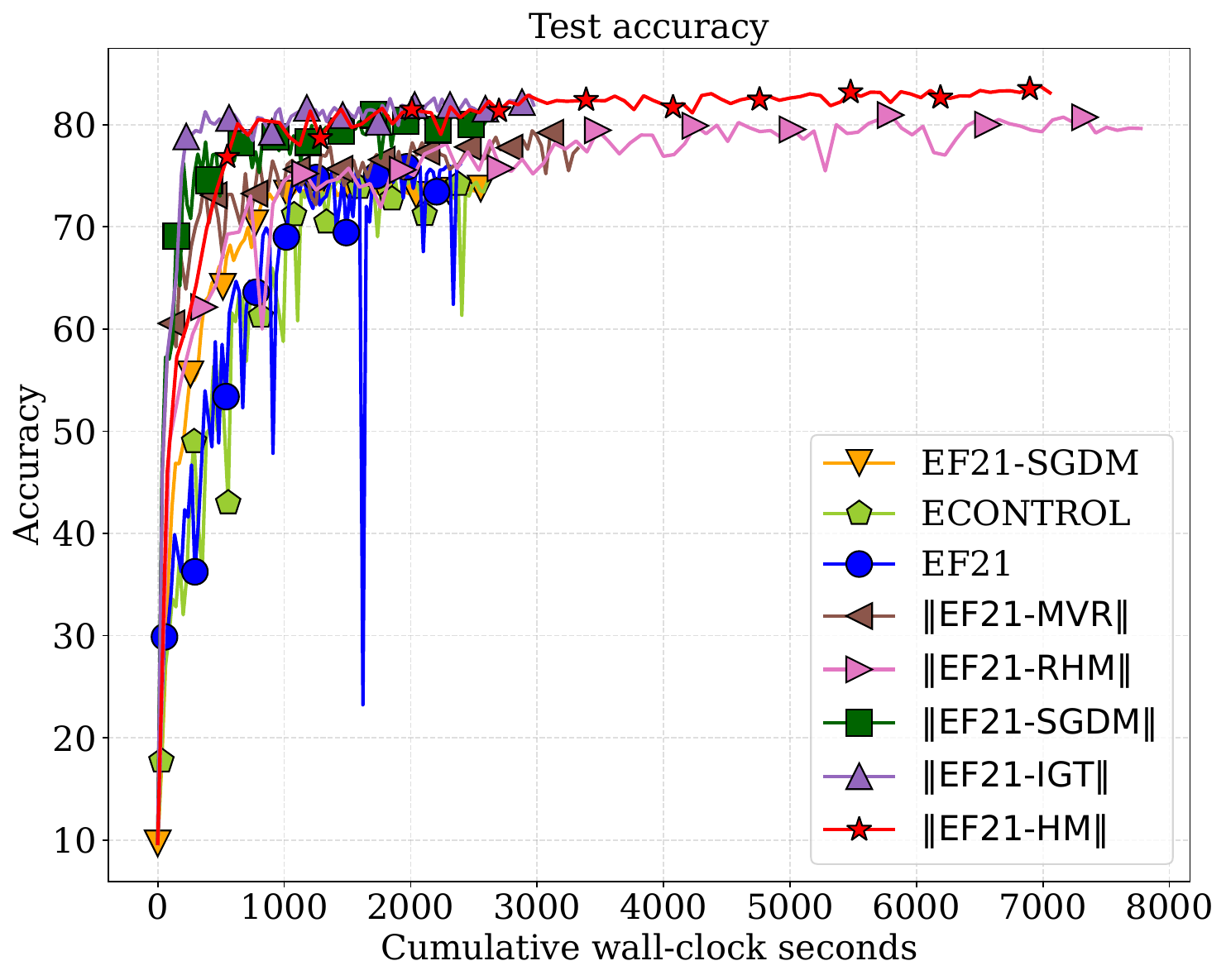}
        \caption{Test Accuracy}
        \label{fig:test_acc_time_full}
    \end{subfigure}
    \caption{Performance comparison as a function of cumulative wall-clock seconds over the full training duration. Methods with higher per-epoch costs take longer to complete the 90-epoch training schedule.}
    \label{fig:performance_plots_time_full}
\end{figure}

\begin{figure}[htbp]
    \centering
    \begin{subfigure}{0.32\textwidth}
        \includegraphics[width=\linewidth]{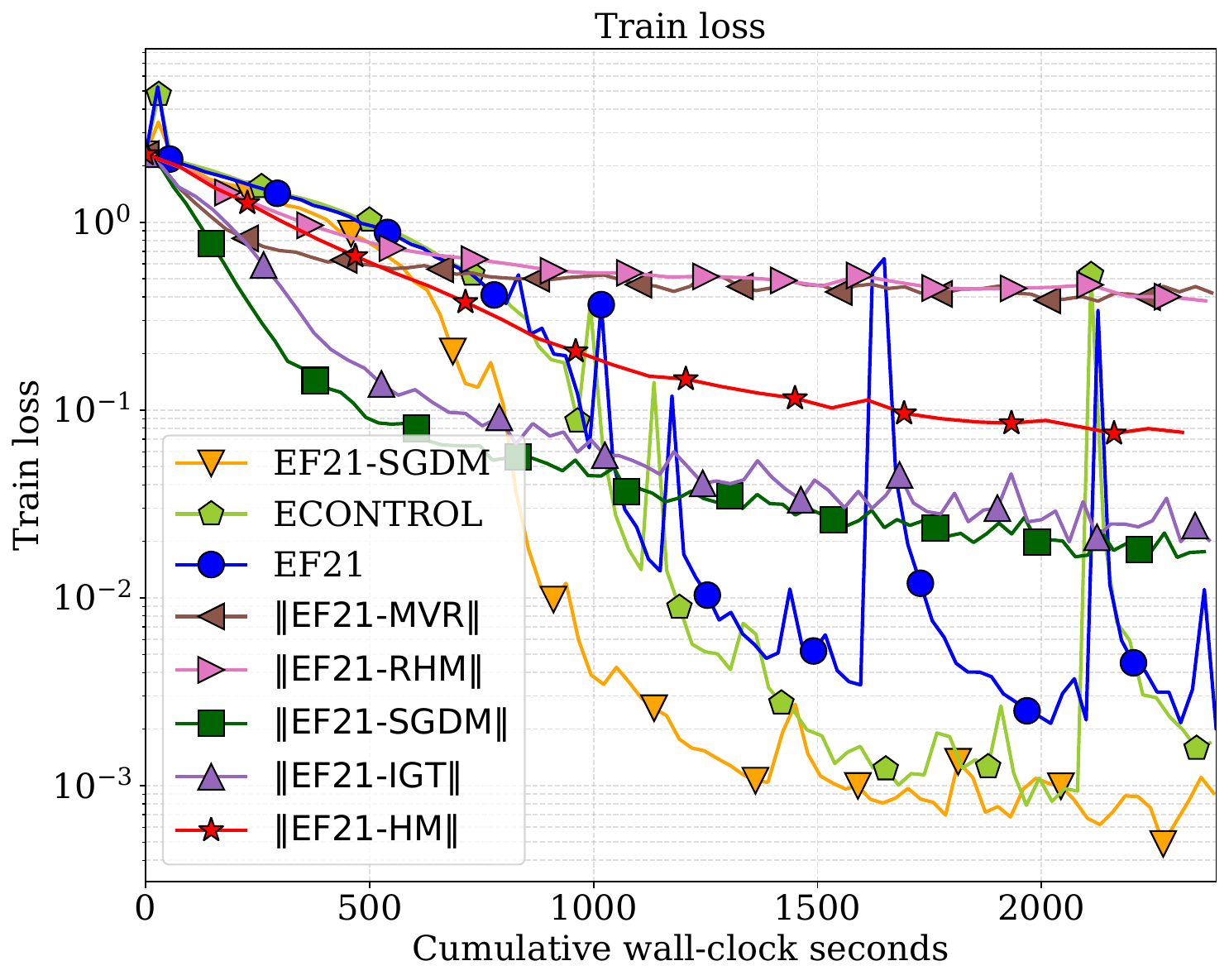}
        \caption{Training Loss}
        \label{fig:train_loss_time_trunc}
    \end{subfigure}
    \hfill 
    \begin{subfigure}{0.32\textwidth}
        \includegraphics[width=\linewidth]{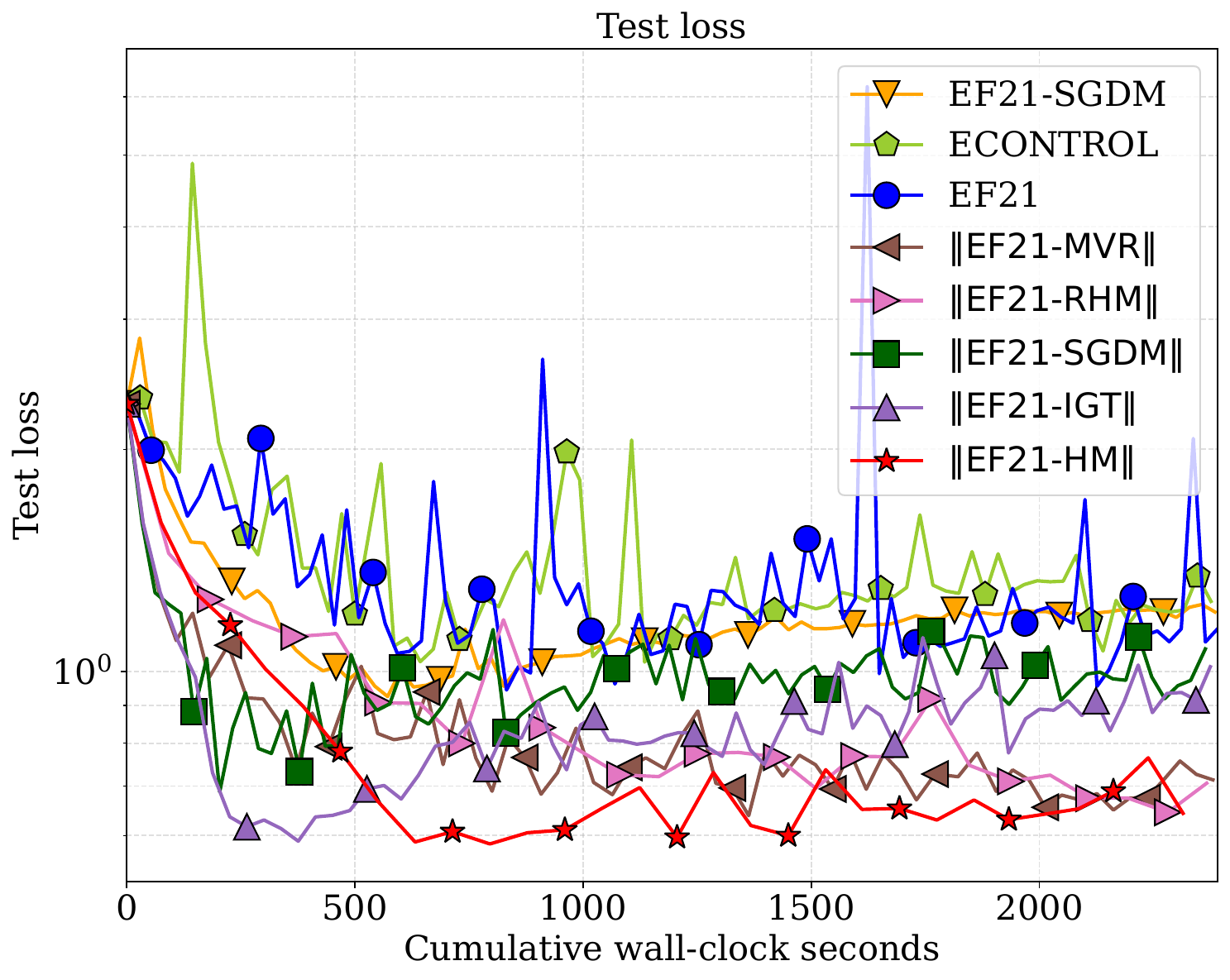}
        \caption{Test Loss}
        \label{fig:test_loss_time_trunc}
    \end{subfigure}
    \hfill 
    \begin{subfigure}{0.32\textwidth}
        \includegraphics[width=\linewidth]{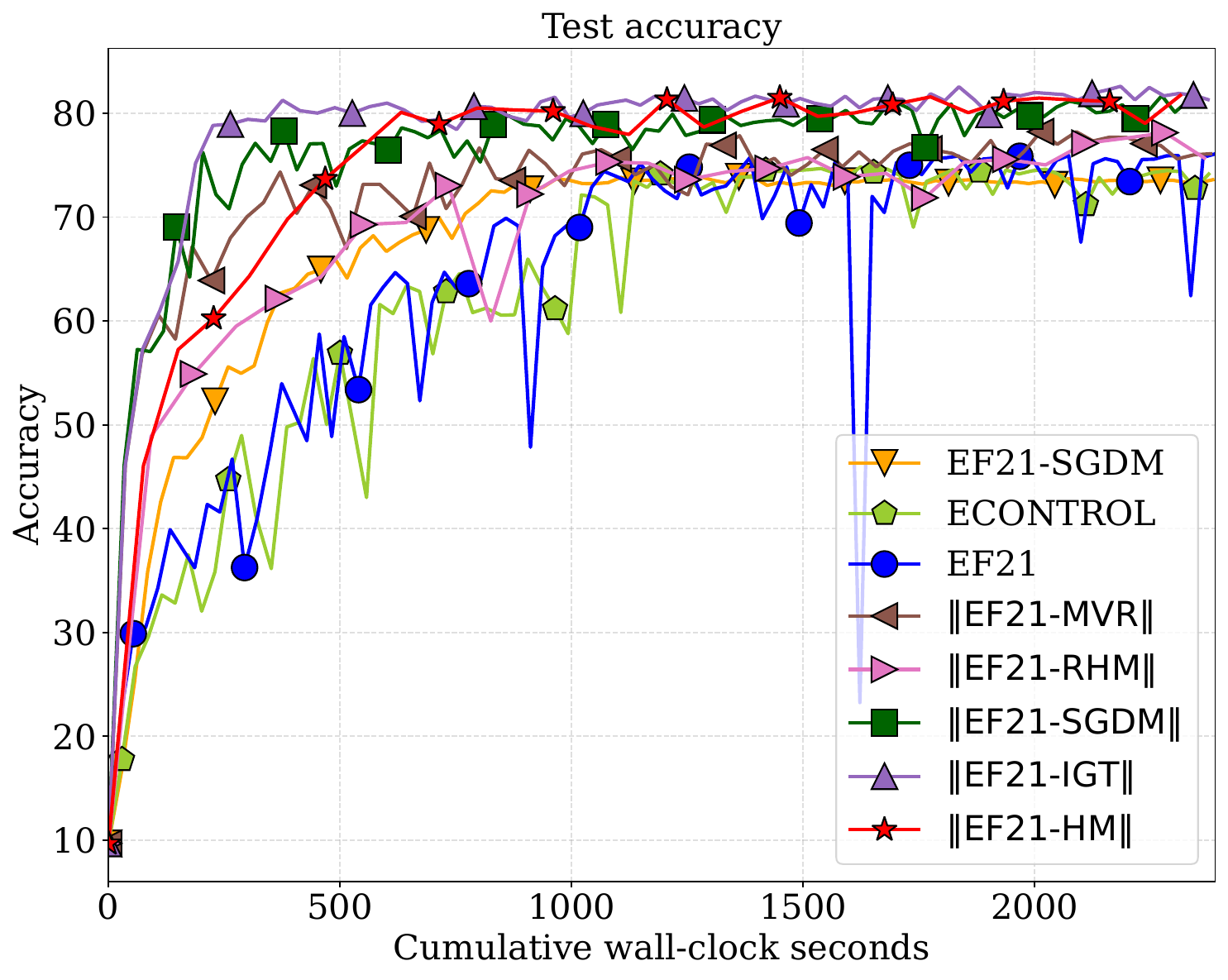}
        \caption{Test Accuracy}
        \label{fig:test_acc_time_trunc}
    \end{subfigure}
    \caption{Time-to-solution performance comparison, with the timeline truncated to the completion time of the fastest methods. This view highlights that \algname{$\norm{\text{EF21-IGT}}$} achieves a convergence speed and accuracy comparable to the much more costly \algname{$\norm{\text{EF21-HM}}$}.}
    \label{fig:performance_plots_time_truncated}
\end{figure}

\subsection{Comparison of Per-Epoch GPU Cost}\label{sec:gpu}
To assess the practical overhead of each method, we report the average GPU time per epoch and normalize these values relative to \algname{$\norm{\text{EF21-SGDM}}$}, which serves as our first-order baseline.

\begin{table}[h!]
\centering
\small
\caption{Average per-epoch GPU runtime and relative cost compared to the baseline, sorted by final validation accuracy.}
\begin{tabular}{@{}lcc@{}}
\toprule
\textbf{Method} & \textbf{Mean sec/epoch} & \textbf{Relative to \algname{$\norm{\text{EF21-SGDM}}$}} \\
\midrule
\algname{EF21-SGDM}          & $28.373$  & $0.984\times$ \\
\algname{EControl}           & $28.943$  & $1.004\times$ \\
\algname{EF21-SGD}           & $26.553$  & $0.921\times$ \\
\midrule
\algname{$\norm{\text{EF21-MVR}}$}  & $36.892$  & $1.279\times$ \\
\algname{$\|\text{EF21-RHM}\|$} & $86.380$  & $2.996\times$ \\
\algname{$\norm{\text{EF21-SGDM}}$} & $28.835$  & $1.000\times$ \\
\algname{$\norm{\text{EF21-IGT}}$}  & $33.026$  & $1.145\times$ \\
\algname{$\|\text{EF21-HM}\|$}  & $78.390$  & $2.719\times$ \\
\bottomrule
\end{tabular}
\end{table}

We note three key practical observations regarding the empirical runtime costs:
\begin{enumerate}
    \item \textbf{Why \algname{$\norm{\text{EF21-HM}}$}'s cost exceeds the idealized $2\times$ baseline.}
    Although \algname{$\norm{\text{EF21-HM}}$} involves two automatic differentiation passes, its empirical cost of $2.72\times$ is notably higher than the theoretical $2\times$ baseline. This discrepancy arises because the two passes are not computationally equivalent:
    \begin{enumerate}[label=(\roman*), itemsep=0pt, topsep=3pt]
        \item \textbf{The Cost of Graph Creation:} The primary source of the additional overhead is the first pass, which calls \texttt{loss.backward(create\_graph=True)}. This operation is significantly more expensive than a standard backward pass. It not only computes the gradients but also constructs and retains a detailed computational graph that includes the gradient operations themselves. This process requires keeping intermediate activations in memory, leading to higher memory consumption and greater computational work, making this single step substantially more costly than a standard $1\times$ backpropagation.
        \item \textbf{The HVP Cost:} The second pass, the \texttt{autograd.grad} call for the HVP, traverses this newly created, more complex graph to compute the second-order information. While this VJP/JVP chain is efficient, it still represents a full computational pass that adds to the total time.
    \end{enumerate}
    The sum of these two steps—one expensive graph-creating backward pass and one HVP pass—results in a total cost greater than two standard backpropagations. This combined cost is then partially mitigated by factors like fixed epoch overheads (data loading, validation) and GPU optimizations (caching, kernel fusion), leading to the final observed slowdown of approximately $2.72\times$.

    \item \textbf{Why \algname{$\norm{\text{EF21-RHM}}$}'s cost aligns with its $3\times$ backprop count.}
    In contrast, the cost of \algname{$\norm{\text{EF21-RHM}}$} ($\approx 2.996\times$) aligns more closely with its theoretical $3\times$ cost. Its three backprop-equivalents consist of one expensive graph-creating pass and two additional passes for the HVP at a different point. The costs in this case are more additive, and the same mitigating factors (fixed overheads, GPU optimizations) explain why the final result is slightly below, but very close to, a perfect $3\times$ slowdown.

    \item \textbf{Why \algname{$\norm{\text{EF21-MVR}}$}'s cost is significantly lower than the idealized $2\times$ baseline.}
    \algname{$\norm{\text{EF21-MVR}}$} is faster than its theoretical $2\times$ cost (empirically $1.32\times$) precisely because it avoids the most expensive operations used in the Hessian-based methods. Its efficiency stems from several factors:
    \begin{enumerate}[label=(\roman*), itemsep=0pt, topsep=3pt]
        \item \textbf{First-Order Operations Only:} Crucially, neither of its two backpropagation-equivalents uses the costly \texttt{create\_graph=True} flag. It relies on two standard, efficient first-order automatic differentiation calls.
        \item \textbf{Efficient Implementation:} The \texttt{autograd.grad} call used for the previous-point gradient is slightly cheaper than a full \texttt{loss.backward()} as it does not need to populate the `.grad` attributes of the model parameters.
        \item \textbf{Minibatch Reuse:} Reusing the same data batch for both gradient computations eliminates the overhead associated with additional data loading and host-device transfers.
    \end{enumerate}
    These efficiencies, combined with the standard fixed per-epoch costs, result in the observed sub-$2\times$ slowdown of approximately $1.32\times$.
\end{enumerate}













\end{document}